\documentclass[11pt]{amsart}

\usepackage[a4paper,margin=29mm]{geometry}
\usepackage{amsmath,amssymb,mathrsfs,mathtools}
\usepackage{enumitem}
\usepackage[hidelinks,hypertexnames=false]{hyperref}
\usepackage[nameinlink,capitalize]{cleveref}

\numberwithin{equation}{section}
\newcommand{\C}{\mathbb C}
\newcommand{\A}{\mathbb A}
\newcommand{\PP}{\mathbb P}
\newcommand{\OO}{\mathcal O}
\newcommand{\Bl}{\operatorname{Bl}}

\newcommand{\id}{\operatorname{id}}

\newcommand{\Span}{\operatorname{span}}

\newcommand{\Sing}{\operatorname{Sing}}
\newcommand{\rank}{\operatorname{rank}}

\theoremstyle{plain}
\newtheorem{theorem}{Theorem}[section]
\newtheorem{proposition}[theorem]{Proposition}
\newtheorem{lemma}[theorem]{Lemma}
\crefname{lemma}{Lemma}{Lemmas}
\Crefname{lemma}{Lemma}{Lemmas}
\newtheorem{corollary}[theorem]{Corollary}

\theoremstyle{definition}
\newtheorem{remark}[theorem]{Remark}
\crefname{remark}{Remark}{Remarks}
\Crefname{remark}{Remark}{Remarks}

\title{Elliptic complements of cubic hypersurfaces}
\author{Song-Yan Xie}
\address{%
  State Key Laboratory of Mathematical Sciences,
  Academy of Mathematics and Systems Science,
  Chinese Academy of Sciences, Beijing 100190, China; 
  School of Mathematical Sciences,
  University of Chinese Academy of Sciences,
  Beijing 100049, China
}
\email{xiesongyan@amss.ac.cn}
\date{\today}
\subjclass[2020]{Primary 32E10, 32Q56; Secondary 14J26, 14J45, 14J70}
\keywords{elliptic manifold, Oka manifold, hypersurface complement,
holomorphic spray, cubic hypersurface, conic bundle, blowup, localization}

\begin{document}

\begin{abstract}
Let \(D\subset\PP^n\), \(n\geqslant2\), be an arbitrary cubic hypersurface,
and let \(D_{\mathrm{red}}\) denote its reduced support. We prove that
\(\PP^n\setminus D\) is holomorphically elliptic, and hence Oka, unless
\(D_{\mathrm{red}}\) is the union of three distinct hyperplanes containing a
common codimension-two linear subspace. In the exceptional case,
\(\PP^n\setminus D\cong(\C\setminus\{0,1\})\times\C^{n-1}\), so the
complement is not Oka.

As applications, we prove that, for every elliptic curve \(E\), the space of
degree-three holomorphic maps \(E\to\PP^1\), and the space of degree-three
holomorphic self-maps of \(\PP^1\), are both holomorphically elliptic, and
hence Oka. The second application is connected with the classification
through an irreducible cubic hypersurface in \(\PP^4\).
\end{abstract}

\maketitle

\section{Introduction}\label{sec:introduction}

Oka theory began with Oka's solution of the second Cousin problem on domains
of holomorphy \cite{Oka1939}. Grauert extended the principle to holomorphic
bundles with homogeneous fibres over Stein spaces
\cite{GrauertApproximation,GrauertLie,GrauertFibrations}. Gromov recast the
theory within the \(h\)-principle and introduced elliptic manifolds through
dominating holomorphic sprays \cite[\S\S0.5--0.6]{Gromov}. Building on
Gromov's framework, Forstneri\v c and Prezelj established Oka principles for
holomorphic submersions with sprays, while Forstneri\v c introduced
subellipticity and developed its Oka principle
\cite{ForstnericPrezelj,ForstnericPrezeljSub,ForstnericSub,
ForstnericRunge,ForstnericOka}; see
\cite{ForstnericLarusson,ForstnericToulouse,ForstnericRecent,
ForstnericBook,ForstnericICM}
for broader accounts.

A persistent problem in Oka theory is to recognize the Oka property from concrete
geometric structures. Its defining approximation properties rarely reveal whether a
given algebraic or quasi-projective variety is Oka, and explicit dominating
sprays are often difficult to find. A new natural class can therefore offer
more than additional examples: the construction itself may isolate a
mechanism that applies elsewhere. Important sources of such mechanisms
include algebraic flexibility, density properties, and blowup constructions;
see
\cite{ForstnericFlexibility,ArzhantsevFlexible,
KalimanKutzschebauchHypersurfaces,LarussonTruong,
KalimanKutzschebauchTruong}.

Hypersurface complements provide a particularly sharp testing ground: they lie
at the interface with logarithmic hyperbolicity, where increasing
degree is expected to produce the opposite behavior. In low degree, Hanysz
studied Oka properties of hyperplane arrangements and meromorphic-graph
complements \cite[Theorems~3.1 and~4.6]{Hanysz}, as well as affine-conic
configurations \cite[Appendix~B]{HanyszCubicMaps}. Kusakabe
proved that every irreducible singular plane-cubic complement is Oka
\cite[Corollary~4.9(2), p.~1050]{KusakabeLocalization}; see also
\cite{KusakabeComplements,ForstnericWold}.  The remaining smooth plane cubic
case was posed by Forstneri\v c--L\'arusson
\cite[Problem~B, p.~33]{ForstnericLarusson}, revisited in
\cite[\S2.3.4, pp.~758--759]{ForstnericToulouse}, and remained recorded in
\cite[Remark~5.6, p.~394]{ForstnericRecent}. Buzzard and Lu obtained
dominability by \(\C^2\) \cite[Proposition~5.1]{BuzzardLu}, but their map is
ramified, and the Oka property does not in general descend through a ramified
map.

These results leave a natural classification problem in degree three.  Since
the complement depends only on
the reduced support of \(D\), write \(D_{\mathrm{red}}\) for the hypersurface
obtained by discarding component multiplicities. Explicitly, if
\[
F=c\prod_{j=1}^{N_F}F_j^{m_j}
\]
with \(N_F\geqslant1\) and distinct irreducible homogeneous factors \(F_j\), then
\[
D_{\mathrm{red}}=\Big\{\prod_{j=1}^{N_F}F_j=0\Big\}.
\]

\begin{theorem}\label{thm:main}
Let \(D\subset\PP^n\), \(n\geqslant2\), be a cubic hypersurface; no
smoothness, irreducibility, or reducedness assumption is imposed.  Then the
following conditions are equivalent:
\begin{enumerate}[label=\textup{(\roman*)}]
\item \(\PP^n\setminus D\) is holomorphically elliptic;
\item \(\PP^n\setminus D\) is Oka;
\item \(D_{\mathrm{red}}\) is not the union of three distinct hyperplanes
containing a common linear subspace of codimension two.
\end{enumerate}
\end{theorem}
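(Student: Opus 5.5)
The plan has three parts. First, I dispose of the exceptional case and the easy implications. Then I classify the reduced cubic supports. Finally, I produce sprays case by case.

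\emph{The easy implications.} Ellipticity implies Oka by Gromov's theorem, so (i)\(\Rightarrow\)(ii). For (ii)\(\Rightarrow\)(iii), suppose \(D_{\mathrm{red}}\) consists of three distinct hyperplanes through a common codimension-two linear subspace \(L\). Projection from \(L\) maps \(\PP^n\setminus L\) onto \(\PP^1\), and the three hyperplanes are preimages of three points. Hence
\[
\PP^n\setminus D\cong(\PP^1\setminus\{3\text{ points}\})\times\C^{n-1}\cong(\C\setminus\{0,1\})\times\C^{n-1}.
\]
This space admits a nonconstant bounded-type map onto a hyperbolic curve, so it is not Oka. Concretely, any holomorphic map \(\C\to\C\setminus\{0,1\}\) is constant, which contradicts the Oka property: every continuous map from \(\C\) is homotopic to a holomorphic one, but a nontrivial loop class cannot be realized by a constant map.

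\emph{Classification of the reduced support.} If \(D_{\mathrm{red}}\) has degree one or two, the complement is \(\C^n\), the complement of a quadric, or the complement of two hyperplanes. These are, respectively, \(\C^n\), a homogeneous space \(\mathrm{SO}_{n+1}\)-type quotient or a cone over one, and \(\C^*\times\C^{n-1}\), and all of them are elliptic. A degenerate quadric gives a product of such a space with an affine space. The degree-three reduced cases are:
\begin{enumerate}[label=(\alph*)]
\item three hyperplanes in general position, giving \((\C^*)^2\times\C^{n-2}\);
\item a hyperplane together with a quadric;
\item an irreducible cubic.
\end{enumerate}
I would first reduce by cones. If \(D\) is a cone with vertex \(V\), linear projection from \(V\) exhibits the complement as a trivial affine bundle, or as a product with \(\C^k\), over the complement of a cubic in a smaller projective space. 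Ellipticity passes to products with \(\C^k\). This reduces the problem to non-cone \(D\) in \(\PP^m\) for small \(m\), and then to an induction on \(n\).

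\emph{Spray construction.} For the line-plus-conic and hyperplane-plus-quadric cases, taking the hyperplane at infinity leaves an affine quadric complement in \(\C^n\). Such a complement carries many complete algebraic vector fields, so it is algebraically flexible and hence elliptic.

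For an irreducible cubic, I would use the mechanism suggested by the keywords, conic bundles and blowups. Choose a point \(p\in D\), ideally a singular point or a general smooth point. Lines through \(p\) meet \(D\) in \(p\) plus two further points, so \(\Bl_p\PP^n\setminus\widetilde D\) fibres over \(\PP^{n-1}\) with fibres \(\PP^1\) minus two points, that is, \(\C^*\), or minus one point when the line is tangent. The complement \(\PP^n\setminus D\) is open in this total space and is a family of \(\C^*\)'s and \(\C\)'s over \(\PP^{n-1}\).

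I would build sprays from the fibrewise \(\C^*\)-actions, or the \(\C\)-translations, localized over Zariski-open pieces of the base. Varying the choice of \(p\) then gives a family of such fibrations whose fibre directions span the tangent space at every point. For the span, it suffices that the lines from varying \(p\in D\) cover all directions. Finally I would glue the local sprays into a dominating spray. The tool here is a localization principle in the style of Kusakabe, or Gromov's composed sprays built from complete vector fields: each fibrewise \(\C^*\)-action integrates a complete vector field on the complement that vanishes to high order along the discriminant.

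\emph{Main obstacle.} I expect the hardest step to be making these fibrewise flows \emph{global} complete vector fields, or genuine sprays, on \(\PP^n\setminus D\). The difficulty lies over the discriminant locus, where the two residual intersection points collide and the \(\C^*\) degenerates to \(\C\). Near there the natural vector field, normalized to have fixed points at the two residual points, blows up.

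My first attempt would be to multiply the field by a suitable power of the discriminant pulled back from the base. This keeps completeness, since a base-constant factor preserves completeness. It should also keep domination off a proper subvariety, and I would then use many centres \(p\) to remove that subvariety. The smooth plane cubic, where the discriminant is the dual curve, is the hardest case and should be checked first.
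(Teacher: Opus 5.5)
There is a genuine gap, and it is in the smooth case, which is the essential one. Your fibre count is right for \(\Bl_p\PP^n\setminus\widetilde D\), but \(\PP^n\setminus D\cong\Bl_p\PP^n\setminus(\widetilde D\cup E_p)\). Since \(p\in D\), the point of each line lying over \(E_p\) is deleted as well. A smooth cubic has only smooth points, and for a general line \(\ell\ni p\) you get \(\ell\setminus D=\PP^1\setminus\{p,r_1,r_2\}\cong\C\setminus\{0,1\}\). That curve has no nonzero complete holomorphic vector fields, and any fibrewise \(\C^*\)-action on \(\PP^1\setminus\{r_1,r_2\}\) pushes points through \(p\), i.e.\ out of the complement. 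Centres off \(D\) are no better, since a general line through them still meets \(D\) three times. So no choice of centres and no discriminant rescaling gives nontrivial complement-preserving fibre flows. The proposal therefore has no mechanism for smooth \(D\): the discriminant is a red herring, and the real obstruction is the third puncture.

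The missing idea is the unramified \(\mu_3\)-cover \(\{F=1\}\to\PP^n\setminus D\), together with \(\{F=1\}=S_F\setminus B\), where \(S_F=\{F=w^3\}\subset\PP^{n+1}\) is a smooth cubic and \(B=S_F\cap\{w=0\}\). Lines now lie on \(S_F\) itself rather than in the deleted locus. In a plane through such a line \(L\not\subset\{w=0\}\), the deleted hyperplane cuts out a line at infinity, so the residual conics become affine conics in \(\C^2\) (generically \(\C^*\)) given by a fibrewise quadratic equation. The paper then:
\begin{itemize}
\item integrates the fibrewise Hamiltonian field of this quadratic, an affine-linear ODE, so the flow is entire even on degenerate conics and no discriminant factor is needed;
\item glues the time parameters through \(\mathcal N_L\cong p_L^*\OO_{\PP^{n-1}}(-2)\);
\item gets domination from skew pairs among the \(27\) lines when \(n=2\);
\item for \(n\geqslant3\), rescales by a section vanishing on the exceptional divisor, descends from \(\Bl_L\), and uses Kaliman--Zaidenberg tangent transfer plus Noetherian extraction.
\end{itemize}
Oka then descends through the covering, and Stein gives ellipticity.

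Your singular-case picture is closer to what works. From a singular point, with \(F=x_0q+c\), the fibres are \(\C^*\) over \(\{q\ne0\}\) and \(\C\) over \(\{q=0\ne c\}\). In a local fibre coordinate the field \((tq+c)\,\partial_t\) is complete and multiplies the equation by \(e^{\tau q}\). But all such fields are radial for that centre, and varying the centre over \(\Sing D\) cannot span the tangent space when \(D\) has, say, a single node. The paper instead gets the transverse directions from the Oka property of the bases. It covers \(\PP^n\setminus D\) by a \(\C^*\)-bundle over a quadric complement and a \(\mu_3\)-quotient of a Hanysz graph complement over the cubic-root cover of \(\PP^{n-1}\setminus\{c=0\}\). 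The latter base is a lower-dimensional cubic complement, possibly smooth, so the smooth case feeds the induction, and Kusakabe localization finishes.

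Smaller points also need repair:
\begin{itemize}
\item \(\C^n\setminus\{q=0\}\) is not algebraically flexible, because \(q\) is a nonconstant unit and every \(\mathbb G_a\)-orbit lies in a level set of \(q\). What works is the holomorphic overshears \(qD_v\) along a basis of \(q_2\)-isotropic vectors, plus normal forms when \(\rank q_2\leqslant1\).
\item Your non-Oka argument with source \(\C\) is vacuous, since \(\C\) is contractible. Use \(\C^*\) as the source, or the retraction onto \(\C\setminus\{0,1\}\).
\item The cone reduction produces the total space of \(\OO(1)^{\oplus k}\), which is nontrivial over an irreducible cubic complement. It also does not reduce to small dimensions, since smooth cubics are never cones.
\end{itemize}
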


Thus there is only one obstruction among cubic supports, and it is completely
explicit. After a projective change of coordinates, the three hyperplanes have
the form
\[
Z_0=0,\qquad Z_1=0,\qquad Z_1-Z_0=0.
\]
Their complement is biholomorphic to
\[
(\C\setminus\{0,1\})\times\C^{n-1},
\]
which is not Oka. Indeed, on the chart \(Z_0\ne0\), the coordinates
\(z=Z_1/Z_0\) and \(w_j=Z_j/Z_0\), \(2\leqslant j\leqslant n\), identify the
complement with the stated product: the first coordinate avoids \(0\) and
\(1\), while the remaining coordinates are unrestricted.

The equivalence of the first two conditions follows from two general theorems
of Gromov. Elliptic manifolds are Oka \cite{Gromov}; see also
\cite[Theorem~1.1, p.~529]{ForstnericSub}. Conversely, these
complements are affine, hence Stein, and every Stein Oka manifold is elliptic
by Gromov's theorem \cite{Gromov}; see also
\cite[Proposition~5.6.15, p.~230]{ForstnericBook}. We use the
elliptic formulation because our proof constructs explicit dominating
holomorphic sprays.

\medskip
\noindent\textbf{Two applications to mapping spaces.}
\smallskip

The classification also yields two applications to natural mapping spaces in
degree three.

Let \(E\) be an elliptic curve, and let \(\mathcal R_d(E,\PP^1)\) denote the
complex manifold of degree-\(d\) holomorphic maps \(E\to\PP^1\). Bowman
proved that, for \(d=3\), its Oka property is equivalent to that of the
complement of a suitable smooth plane cubic \cite[Corollary~30]{Bowman}.
Together with \cref{thm:main}, this gives the following stronger conclusion.

\begin{theorem}
\label{thm:elliptic-functions}
For every elliptic curve \(E\), the manifold \(\mathcal R_3(E,\PP^1)\) is
holomorphically elliptic. In particular, it is Oka.
\end{theorem}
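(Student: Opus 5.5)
The plan is to reduce the ellipticity of \(\mathcal R_3(E,\PP^1)\) to the ellipticity of a smooth plane cubic complement, which \cref{thm:main} supplies, using the structure behind Bowman's equivalence. Bowman only states an equivalence of Oka properties, so we need the underlying geometry, not just the statement. Write \(E\subset\PP^2\) as a smooth plane cubic via a complete linear system \(|3p_0|\). A degree-three map \(f\colon E\to\PP^1\) is given by a pencil in \(|L|\) without base points, where \(L\) is a degree-three line bundle, together with a choice of basis up to scaling.

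First, I would describe the structure of \(\mathcal R_3(E,\PP^1)\). The translation group \(E\) acts freely by precomposition, and every degree-three line bundle is a translate of \(\OO(3p_0)\). This gives a holomorphic fibre bundle
\[
\mathcal R_3(E,\PP^1)\to \operatorname{Pic}^3(E)\cong E,
\]
whose fibre is the space \(\mathcal R_L\) of base-point-free pairs \((s_0,s_1)\) in \(H^0(L)^2\), taken modulo \(\C^*\). Since \(E\) acts transitively on the base, I expect this bundle to be isomorphic to a quotient \((E\times \mathcal R_{L_0})/\Gamma\) by a finite group, namely \(E[3]\), acting freely. Alternatively, since \(E\) is a compact complex Lie group acting freely, I expect a product decomposition up to a finite étale cover.

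Second, I would identify \(\mathcal R_{L_0}\). A pair \((s_0,s_1)\) spans a line in \(\PP(H^0(L_0)^\vee)\cong\PP^2\) and fails to be base-point-free exactly when that line meets \(E\) in a point where both sections vanish. Working dually, \(\mathcal R_{L_0}\) fibres over the dual-plane data with fibre \(\mathrm{GL}_2(\C)/\C^*\)-like homogeneous pieces. The base-point condition is then the condition that a point of the dual \(\PP^2\) (the base locus of the pencil) avoids \(E\). Concretely, \(\mathcal R_{L_0}\) should be a holomorphic fibre bundle over \(\PP^2\setminus E\) whose fibre is an open subset of \(\C^{2}\times\C^2\) of full-rank frames modulo scaling, that is, \(\mathrm{GL}_2(\C)\)-homogeneous. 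This is Bowman's picture.

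Third, I would transfer ellipticity. \(\PP^2\setminus E\) is elliptic by \cref{thm:main}, since a smooth cubic is not a union of three concurrent lines. The fibre is a complex homogeneous manifold of a complex Lie group, hence elliptic. I would then use Gromov's localization and fibration principles: a holomorphic fibre bundle with homogeneous fibre over an elliptic base is elliptic when the bundle is locally trivial in the Zariski or holomorphic sense with structure group acting transitively. More precisely, sprays on the base lift using the connection given by the group action, and the vertical sprays come from the Lie algebra. The compact factor \(E\) is elliptic as a complex torus, products of elliptic manifolds are elliptic, and finite unramified quotients and covers preserve the property when sprays are equivariant; translation-invariant sprays on \(E\) are \(\Gamma\)-equivariant.

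The main obstacle is the lifting step. Ellipticity, unlike the Oka property, is not known to pass through arbitrary holomorphic fibre bundles. To handle it I would work with the algebraic structure: the bundle is a principal (or associated) \(\mathrm{PGL}_2\)-type bundle. I would first try to show that it is Zariski-locally trivial, using the fact that \(\PP^2\setminus E\) is affine and that \(\mathrm{GL}_2\)-torsors are Zariski-locally trivial. With that in hand, I would apply the Gromov result that an algebraically subelliptic base together with such a bundle yields an elliptic total space, since the total space is Stein and Oka. Indeed, the total space is affine, hence Stein, and Oka because Oka passes up fibre bundles with Oka fibres. Gromov's theorem, as cited after \cref{thm:main}, then upgrades Oka to elliptic, which avoids constructing sprays by hand.
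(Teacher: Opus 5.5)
The gap is in your final step, passing from \(E\times\mathcal R_{L_0}\) to \(\mathcal R:=\mathcal R_3(E,\PP^1)\).

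Your structural picture is sound. With \(\Gamma=E[3]\) one has \(\mathcal R\cong(E\times\mathcal R_{L_0})/\Gamma\). Also \(\mathcal R_{L_0}\) is a principal \(\operatorname{PGL}_2(\C)\)-bundle over \(\PP^2\setminus E\), so it is affine and Oka, hence elliptic by Gromov's Stein converse.

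A product spray on \(E\times\mathcal R_{L_0}\) descends to \(\mathcal R\) only if its \(\mathcal R_{L_0}\)-component is \(\Gamma\)-equivariant for the free action \(f\mapsto f\circ\tau_\gamma\), where \(\tau_\gamma\) is translation by \(\gamma\). The dominating spray produced by Gromov's theorem has no reason to be equivariant. The invariance of the translation spray on the \(E\)-factor does not help here.

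Your closing argument cannot replace this step either. The manifold \(\mathcal R\) itself is not affine or Stein, because every orbit \(\{f\circ\tau_a:a\in E\}\) is a compact curve in it. The converse Oka \(\Rightarrow\) elliptic is known only for Stein (and projective) manifolds, and \(\mathcal R\) is neither. Two side remarks. First, \(\PP^2\setminus E\) is not algebraically subelliptic (cf.\ \cref{rem:algebraic-ellipticity}), so the ``Gromov result'' you invoke for algebraically subelliptic bases does not apply. Second, the principle that ellipticity passes through any holomorphic bundle with homogeneous fibre, by lifting sprays via a connection, is not an available theorem.

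The paper handles exactly this point. From Bowman it takes that \(\mathcal R\) is Oka iff \(\PP^2\setminus C_E\) is, and that \(\mathcal R\to B:=\mathcal R/E\) is a holomorphic principal \(E\)-bundle with Stein base. Then \(B\) is Oka by the fibre-bundle principle, hence elliptic by Gromov. Finally \cref{lem:principal-bundle-elliptic} lifts ellipticity to \(\mathcal R\). Over the Stein total space \(V\) of a dominating spray \(s\colon V\to B\), Grauert's Oka principle gives \(p^*\mathcal R\cong s^*\mathcal R\), normalized on the zero section. This lifts \(s\), and the fibre directions come from \(\exp\) of the Lie algebra of \(E\).

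In your notation \(B=\mathcal R_{L_0}/\Gamma\). It is Stein as a free finite quotient of an affine manifold, and Oka by the covering principle. So your route can be repaired more elementarily:

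1. Lift a dominating spray \(s\colon V\to B\) through the unramified covering \(\pi\colon\mathcal R_{L_0}\to B\) to \(\tilde s\colon\pi^*V\to\mathcal R_{L_0}\), with \(\tilde s=\id\) on the zero section.
2. Uniqueness of such lifts forces \(\gamma\circ\tilde s\circ\gamma^{-1}=\tilde s\) for all \(\gamma\in\Gamma\).
3. Hence \(\tilde s\), combined with the translation spray on \(E\), is a \(\Gamma\)-equivariant dominating spray and descends to \(\mathcal R\).

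Some equivariant lifting step of this kind is the missing idea. Without it, your proposal establishes ellipticity only for \(\mathcal R_{L_0}\) and \(E\times\mathcal R_{L_0}\), not for \(\mathcal R\).
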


This settles the degree-three case of Forstneri\v c's Problem~7.6.23
\cite[Problem~7.6.23, p.~347]{ForstnericBook}.

\medskip

Let \(R_d\) denote the complex manifold of degree-\(d\) holomorphic self-maps
of \(\PP^1\). Equivalently, a point of \(R_d\) is represented by a pair
\([p:q]\) of binary forms of degree \(d\) without a common zero, modulo
simultaneous nonzero scaling. Hanysz proved that \(R_3\) is strongly
dominable and strongly \(\C\)-connected
\cite[Theorems~1.7 and~1.8]{HanyszCubicMaps}, while its Oka property remained
open.

\begin{theorem}
\label{thm:cubic-rational-maps}
The manifold \(R_3\) is holomorphically elliptic. In particular, it is Oka.
\end{theorem}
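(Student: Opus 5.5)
The plan is to reduce \cref{thm:cubic-rational-maps} to \cref{thm:main} by realizing \(R_3\) as the total space of a holomorphic fibre bundle. The fibres should be complex homogeneous, hence Oka. The base should be a hypersurface complement \(\PP^4\setminus D\), where \(D\) is an irreducible cubic. Ellipticity is then recovered from the Oka property using Steinness.

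First I will record that \(R_3\) is Stein. It is the complement in \(\PP^7=\PP(\{(p,q)\})\) of the resultant hypersurface \(\{\mathrm{Res}(p,q)=0\}\), so it is affine. By Gromov's theorem quoted after \cref{thm:main}, it therefore suffices to prove that \(R_3\) is Oka.

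Next I will use the free actions on \(R_3\) by Möbius transformations of the target, \(f\mapsto g\circ f\). This action is free: if \(g\circ f=f\) with \(f\) surjective, then \(g=\id\).

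\begin{itemize}
\item Evaluation \(f\mapsto f(\infty)\) is a \(PGL_2\)-equivariant surjection \(R_3\to\PP^1\). Its fibre is \(F=\{f: f(\infty)=\infty\}\), so \(R_3\cong PGL_2\times_B F\), with \(B\) the stabilizer of \(\infty\).
\item Write \(f=p/q\) in the affine coordinate, so that \(\deg p=3\) and \(\deg q\leqslant 2\). The affine group \(\{w\mapsto aw+b\}\subset B\) acts freely on \(F\) by \(p\mapsto ap+bq\), and the quotient has dimension \(4\).
\item I will choose explicit normal forms: make \(p\) monic and kill one coefficient of \(p\) using \(b\,q\). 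This identifies \(F\) with \(\mathrm{Aff}\times Y\), or at least exhibits \(F\to Y\) as a principal \(\mathrm{Aff}\)-bundle. Here \(Y\) is an open subset of \(\C^4\), cut out by the nonvanishing of a reduced resultant.
\end{itemize}

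The key claim is that, after a suitable compactification, \(Y\cong\PP^4\setminus D\). The cubic \(D\) would be the closure of the resultant locus together with the hyperplane at infinity, with the degrees arranged so that the total degree is three. The abstract's phrase ``through an irreducible cubic hypersurface in \(\PP^4\)'' suggests that \(D\) is in fact irreducible, possibly after absorbing the hyperplane at infinity by a different choice of projective coordinates. Once this is established, \(D_{\mathrm{red}}\) is certainly not a union of three hyperplanes. By \cref{thm:main}, \(Y\) is then elliptic, hence Oka.

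Granting the claim, the argument closes as follows. Oka manifolds are closed under passing to total spaces of holomorphic fibre bundles with Oka fibres (\cite[Theorem~5.6.5]{ForstnericBook}). The groups \(\mathrm{Aff}\), \(B\) and \(PGL_2\) are complex Lie groups, hence Oka. Applying this twice, to \(F\to Y\) and to \(R_3\to\PP^1\) with fibre \(F\), shows that \(R_3\) is Oka. Since \(R_3\) is Stein, it is elliptic.

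The main obstacle is the third step. It requires choosing the normalization so that the quotient \(Y\) is visibly a complement of a cubic in \(\PP^4\), and then checking that this cubic is irreducible rather than, say, a quartic or a reducible configuration. To attack this, I would first treat the generic stratum where \(q\) has two distinct roots, and then check that the degenerations (\(q\) with a double root, or \(\deg q<2\)) fill out a single irreducible cubic. A secondary point is local triviality of \(F\to Y\). Freeness of the action together with properness, which follows because the resultant is invariant up to a nonzero factor, should give a principal bundle via holomorphic slices.
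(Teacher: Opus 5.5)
Your outer framework is sound: $R_3$ is affine, the Oka principle for fibre bundles with Oka fibres applies, and Gromov's converse turns Stein plus Oka into ellipticity. The trouble is that everything of substance sits in your ``key claim'', which you leave unproved, and the route you sketch toward it does not work.

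Since the stabilizer of $\infty$ in $\mathrm{PGL}_2(\C)$ is exactly the affine group, your $Y=F/\mathrm{Aff}$ is just the postcomposition quotient $R_3/\mathrm{PGL}_2(\C)$. That is the space $\mathcal B$ of base-point-free pencils in $\operatorname{Gr}(2,4)$, so the detour through $f\mapsto f(\infty)$ gains nothing.

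Your normal form does not parametrize $\mathcal B$ globally. Killing a coefficient of $p$ via $p\mapsto p+bq$ requires the matching coefficient of $q$ to be nonzero. Also, after $p$ is made monic, a residual $\C^*$ still rescales $q$. A genuine slice in $\C^4$, say $p=z^3+p_1z+p_0$ and $q=z^2+q_1z+q_0$, covers only the open stratum $\{\deg q=2\}$. Maps such as $z^3/(z-1)$ or $z^3/(z-1)^2$ are honest points of $Y$, so the degenerations you list cannot ``fill out'' the deleted cubic.

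Nor can such a chart compactify to the complement of an irreducible cubic. On $\C^4\setminus\{r=0\}$ the function $r$ is a nonconstant unit, whereas $\PP^4\setminus D$ with $D$ irreducible has only constant invertible regular functions. So the hyperplane at infinity can never be ``absorbed'' by a coordinate change. Concretely, put $A=p_1+q_1^2-q_0$ and $C=p_0+q_0q_1$. Then $\operatorname{Res}(p,q)=A^2q_0-ACq_1+C^2$, so the boundary of this chart in $\PP^4$ is a cubic plus a hyperplane. That is a quartic, to which \cref{thm:main} does not apply, and in any case the chart is only a Zariski-open part of $Y$.

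The missing idea is a global description of $\mathcal B$, and this is where the paper's proof lives.

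\begin{enumerate}
\item In Pl\"ucker coordinates, $\mathcal B$ is the part of the quadric $\operatorname{Gr}(2,4)\subset\PP^5$ where the resultant is nonzero.
\item The B\'ezout formula gives $\operatorname{Res}(p,q)=\pm\det A(W)$, where $A(W)$ is a symmetric $3\times3$ matrix linear in the Pl\"ucker coordinates. This identifies $\PP^5$ with $\PP(\operatorname{Sym}_3(\C))$ and turns the Pl\"ucker equation into $Q(A)=xw-yv+zu-z^2$.
\item Projective inversion $[A]\mapsto[\operatorname{adj}A]$ is a biregular involution of the invertible locus, because $\operatorname{adj}(\operatorname{adj}A)=(\det A)A$. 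It linearizes the quadric: $Q(A)=B_{22}-B_{13}$ for $B=\operatorname{adj}A$.
\item Hence $\mathcal B\cong\{B_{22}=B_{13}\}\setminus\{\det B=0\}=\PP^4\setminus D$, with $D=\{XZW-XV^2-Y^2W+2YVZ-Z^3=0\}$. This cubic is irreducible by Gauss's lemma.
\end{enumerate}

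This also settles your secondary point about local triviality. The map $R_3\to\mathcal B$ is the projective frame bundle of the tautological rank-two bundle, hence a locally trivial principal $\mathrm{PGL}_2(\C)$-bundle, and no slice or properness argument is needed. With this identification, your closing argument (or the paper's principal-bundle lemma) finishes the proof. Without it, \cref{thm:main} has not been connected to $R_3$ at all.
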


The connection with \cref{thm:main} comes from the postcomposition quotient of
\(R_3\), which is biholomorphic to the complement of an irreducible cubic
hypersurface in \(\PP^4\). The two applications are proved in
\cref{subsec:elliptic-functions,subsec:cubic-rational-maps}.

\medskip
\noindent\textbf{Proof strategy.}
\smallskip

We now describe the geometric mechanism behind
\cref{thm:main}. The essential case is the smooth one; the singular and
reducible cases are treated afterwards by different methods.

Let \(D=\{F=0\}\subset\PP^n\) be smooth. The cyclic cubic cover reduces
the complement to a hyperplane complement in a smooth projective cubic.
More precisely, \(\{F=1\}\) is an unramified cyclic cover of
\(\PP^n\setminus D\), and
\[
\{F=1\}=S_F\setminus B,\qquad
S_F=\{F(x)=w^3\}\subset\PP^{n+1},\qquad
B=S_F\cap\{w=0\}.
\]
Thus it remains to construct complement-preserving sprays on \(S_F\setminus B\).
Our method uses the residual-conic fibrations determined by lines on \(S_F\);
turning their fibrewise Hamiltonian flows into global sprays requires different
arguments for cubic surfaces and for higher-dimensional cubics. This leads to
the following two results.

\begin{theorem}\label{thm:surface-source}
Let \(S\subset\PP^3\) be a smooth cubic surface, let
\(H_0\subset\PP^3\) be a hyperplane, and assume that
\(B=S\cap H_0\) is smooth.  Then the \(27\) lines on \(S\) furnish a finite
global dominating family of rank-one holomorphic sprays on
\(S\setminus B\).  In particular, \(S\setminus B\) is holomorphically
elliptic.
\end{theorem}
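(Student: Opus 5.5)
For each line \(L\subset S\), projecting from \(L\) will give the conic bundle \(\pi_L\colon S\to\PP^1\). The planes \(\Pi_t\supset L\) cut \(S\) in \(L\cup C_t\), where \(C_t\) is the residual conic. I would use the flows of these conics, restricted to \(S\setminus B\), as the building blocks of the sprays.

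\emph{The affine conics.} The hyperplane \(H_0\) meets \(\Pi_t\) in a line \(\ell_t=\Pi_t\cap H_0\). Since \(B=S\cap H_0\) is a smooth plane cubic, \(L\not\subset H_0\). Hence \(\ell_t\) passes through the single point \(p_L=L\cap H_0\) for every \(t\), and \(C_t\cap B=C_t\cap \ell_t\) consists of two points counted with multiplicity. So \(C_t\setminus B\) is a smooth conic minus two points, or minus one tangency point. That is, it is \(\C^*\) or \(\C\), except over the finitely many \(t\) where \(C_t\) degenerates into two lines. In the \(\C^*\) or \(\C\) case it carries a canonical complete vector field, and a fibrewise Hamiltonian description makes this concrete. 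In the affine plane \(\Pi_t\setminus\ell_t\cong\C^2\), write the restricted cubic as \(\lambda\cdot q_t\), with \(\lambda\) cutting out \(L\). Then \(C_t\setminus B=\{q_t=0\}\), and the Hamiltonian field \(V=J\nabla q_t\) is tangent to the fibres. I then modify it by a holomorphic function of \(t\), together with a factor vanishing on the singular fibres, to make it complete on each fibre. The flow is affine-linear in suitable coordinates: multiplicative on \(\C^*\) fibres, translational on \(\C\) fibres.

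\emph{Globalisation.} Making the family \(t\mapsto V_t\) holomorphic over all of \(S\setminus B\), including near the singular fibres, is the step I expect to be the main obstacle. There the conic breaks into \(L'\cup L''\), and one component may meet \(B\) only once, so the flow degenerates. My first attempt is to multiply \(V\) by a section \(h(t)\) of a suitable power of \(\OO_{\PP^1}(1)\), vanishing to high order at the five degenerate values \(t_1,\dots,t_5\) and at the tangency values. The product \(h\cdot V\) is again complete, since a fibrewise-constant rescaling preserves completeness. It then defines a global holomorphic complete vector field \(V_L\) on \(S\setminus B\). The required regularity comes from the fact that, in coordinates adapted to \(\pi_L\), \(V\) is a rational vector field whose poles lie only over finitely many \(t\). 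I then take the rank-one spray \(s_L(x,\tau)=\phi^{V_L}_\tau(x)\). I would also check that \(s_L\) never leaves \(S\setminus B\), which holds because each flow preserves the fibres \(C_t\setminus B\).

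\emph{Domination.} At a point \(x\in S\setminus B\), the spray \(s_L\) has differential spanning \(T_xC_t\) whenever \(x\) lies on a smooth fibre not over a zero of \(h\), and \(x\notin L\). I must show that the tangent lines to the residual conics through \(x\), taken over all \(27\) lines \(L\), span \(T_xS\). Two lines \(L_1,L_2\) through which the conic tangents at \(x\) differ already suffice. For a general \(x\), skew lines give transverse conic bundles, because their fibre classes \(H-L_1\) and \(H-L_2\) intersect positively. Points where all \(27\) directions coincide, or where every \(\pi_L\) is degenerate, form a proper analytic subset. That is enough for ellipticity only if I can push domination everywhere. So I will use the finiteness of the bad loci: each \(\pi_L\) is bad along a curve (singular fibres, \(L\) itself, and the tangency fibres). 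I would verify by intersection numbers that no point of \(S\setminus B\) is bad for all but at most one of the \(27\) fibrations, for instance by exhibiting at each point two fibrations from a double-six configuration whose bad curves avoid it. The composed spray \(s=s_{L_1}\circ\cdots\circ s_{L_{27}}\) is then a dominating spray, giving ellipticity.
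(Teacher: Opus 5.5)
Your globalisation step cannot be carried out, and the obstruction is not where you locate it. The Hamiltonian field of the fibrewise quadratic is affine-linear in the fibre coordinates, so it is already complete on every fibre, including line pairs and parabolas, since its flow is a matrix exponential. It has no poles over the five degenerate values or the tangency values, and no vanishing factor is needed there.

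The real obstruction is global. Under a change of trivialization the field is multiplied by \(\lambda/\det A\), which is the cocycle of \(\pi_L^*\OO_{\PP^1}(-2)\). For instance, let \(L^\circ\) be the \(z\)-axis and the planes be \(y=tx\). Then the Hamiltonians in the charts \(t\) and \(s=1/t\) satisfy \(V_{(s)}=s^{2}V_{(t)}\). So \(V_{(t)}\) has a double pole along the fibre over \(t=\infty\), and multiplying by a polynomial \(h(t)\) of degree \(k\) raises this pole to order \(k+2\).

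No other choice works either. \(B\) contains no fibre component, so every fibre of \(\pi_L\) meets \(Y\) at smooth points of the fibre, and \(\pi_L(Y)=\PP^1\). On a \(\C^*\)-fibre every complete field is a constant multiple of the Hamiltonian. Hence a complete holomorphic vector field on \(Y\) tangent to the fibres of \(\pi_L\) has the form \(c\cdot\mathfrak h_L\), with \(c\) fibrewise constant. Boundedness near regular points of the finitely many remaining fibres lets \(c\) extend, so \(c\in H^0(\PP^1,\OO(-2))=0\). Thus the nonzero complete vertical field \(V_L\) you want does not exist, whatever \(h\) you choose.

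The missing idea is to let the time parameter take values in the line bundle \(\mathcal N_L\cong\pi_L^*\OO_{\PP^1}(-2)|_Y\), which is nontrivial on \(Y\). The transition factors depend only on the base point, so they are constant along trajectories. The local entire flows therefore glue to a spray \(\mathcal N_L\to Y\) whose vertical differential spans \(\ker d\pi_L\) wherever \(\pi_L\) is a submersion. This includes points of \(L^\circ\) and smooth points of degenerate or tangent fibres.

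Because these parameter bundles are nontrivial, your composition \(s_{L_1}\circ\cdots\circ s_{L_{27}}\) is not available. The paper instead gets a finite dominating family (subellipticity), hence the Oka property, and then uses Gromov's theorem that Stein Oka manifolds are elliptic. Alternatively, you could precompose with finitely many global sections of \(\mathcal N_L\), which exist since \(Y\) is affine. That gives sprays on trivial bundles, though they are not flows.

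Your domination step is also only sketched. The paper proves that for skew \(L,M\) the map \((\pi_L,\pi_M)\) fails to be a local biholomorphism exactly along the lines meeting both \(L\) and \(M\). It then treats points lying on \(0,1,2\) or \(3\) lines separately. At points of a line \(A\) it uses \(\pi_A\) itself, which is precisely a locus your vanishing factor would discard.
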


\begin{theorem}\label{thm:higher-source}
Let \(S\subset\PP^{n+1}\) be a smooth cubic hypersurface of dimension
\(n\geqslant3\), let \(H_0\subset\PP^{n+1}\) be a hyperplane, and put
\(B=S\cap H_0\), which need not be smooth.  Then \(S\setminus B\) admits a
finite global dominating family of holomorphic sprays on trivial line
bundles.  In particular, \(S\setminus B\) is holomorphically elliptic.
\end{theorem}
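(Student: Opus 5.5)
The plan is to build the sprays from the residual-conic fibrations attached to lines on \(S\), exactly as in the strategy described before \cref{thm:surface-source}, but now using the extra room available when \(\dim S=n\geqslant3\).

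Fix a line \(L\subset S\) not contained in \(H_0\); lines on a smooth cubic of dimension \(\geqslant3\) form a family of dimension \(2n-4\geqslant2\) covering \(S\), so such lines exist and pass through a general point. Projection from \(L\) gives a rational map \(\pi_L\colon S\dashrightarrow\PP^{n-1}\). It becomes a morphism on \(\Bl_L S\), whose fibres are the residual conics \(C\), with \(\Pi\cap S=L\cup C\) for the planes \(\Pi\supset L\).

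On a smooth fibre conic \(C\cong\PP^1\), the set \(C\cap H_0\) consists of two points (generically), so \(C\setminus B\cong\C^*\), or \(\C\) when \(C\) is tangent to \(H_0\). Here I will not work with the conic alone. Instead I use the affine plane \(\Pi\setminus(\Pi\cap H_0)\cong\C^2\), in which \(C\setminus B\) is an affine conic. The ambient coordinates then give a fibrewise Hamiltonian vector field, tangent to the level curves of the quadratic equation of \(C\) and polynomial in the plane coordinates.

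Concretely, choose affine coordinates on \(\PP^{n+1}\setminus H_0\cong\C^{n+1}\), write \(\Pi=p+\Span(u,v)\), and let \(q_\Pi(s,t)\) be the quadratic whose zero set is \(C\). The vector field \(\partial_t q_\Pi\,\partial_s-\partial_s q_\Pi\,\partial_t\) is complete on the affine conic (its flow is linear in suitable coordinates for \(\C^*\) or \(\C\)). It has polynomial dependence on \(\Pi\), so it defines an algebraic vector field \(V_L\) on an open part of \(S\setminus B\).

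The first key step is to multiply \(V_L\) by a function \(h\) pulled back from the base, so that \(hV_L\) extends holomorphically and remains complete on all of \(S\setminus B\). Since \(h\) is constant along fibres, \(hV_L\) is still complete where it is defined. The bad loci are three:
\begin{itemize}
\item the points of \(L\setminus H_0\), where \(\pi_L\) is undefined;
\item singular (line-pair) residual conics;
\item planes \(\Pi\) in which \(L\) is tangent to \(C\).
\end{itemize}
To control all of them I take \(h\) to be a section of a suitable power of \(\OO(1)\) on \(\PP^{n-1}\), divided by a power of the linear form cutting \(H_0\), which is invertible on \(S\setminus B\). I then choose \(h\) so that it vanishes to high order along the discriminant of the conic bundle and along the exceptional divisor. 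In dimension \(\geqslant3\) the extension across \(L\) should be automatic by Hartogs-type arguments, because \(L\) has codimension \(\geqslant2\) in \(S\).

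The flow \(\phi^L_t\) of \(hV_L\) then gives a spray \(s_L\colon(S\setminus B)\times\C\to S\setminus B\), namely a spray on a trivial line bundle, whose derivative at the zero section spans \(\C\cdot h V_L(x)\).

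The second step is domination. At a point \(x\in S\setminus B\), the vectors \(V_L(x)\), for lines \(L\) through a nearby point and lying on \(S\), should span \(T_xS\). The reason is that the tangent direction of the residual conic through \(x\) varies with \(L\), and the lines on \(S\) form a large family. Using the composition of finitely many sprays \(s_{L_1}\circ\cdots\circ s_{L_N}\), domination is an open condition, so by a Noetherian induction on the degeneracy locus finitely many lines suffice.

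I expect the main obstacle to be the extension-and-completeness step near singular fibres, including the case of non-smooth \(B\), where a residual conic can lie in \(H_0\) or meet \(B\) non-transversally. My first attempt there is to use the explicit linear flow on each affine conic and check that it depends holomorphically on \(\Pi\) away from the discriminant. I would then kill any poles by raising the order of vanishing of \(h\).
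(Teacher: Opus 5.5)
There is a genuine gap in your first key step. No rescaling \(h\) can make \(hV_L\) a nonzero \emph{complete} holomorphic vector field on \(Y=S\setminus B\), so the spray cannot be obtained as the flow of a vector field at all.

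Two things go wrong. First, the time parameter of the fibrewise Hamiltonian field is not a function. Under a change of affine frame and of local equation the field changes by the base-dependent factor \(\lambda_{\beta\alpha}/\det A_{\beta\alpha}\), so its time lives in \(\mathcal N_L\cong p_L^*\OO_{\PP^{n-1}}(-2)\). This bundle has no nonzero fibre-constant sections. Your \(h=g/\ell_0^k\), with \(g\) a section of \(\OO(k)\), is an honest function, but it is not constant along the planes \(\Pi\), so your completeness argument no longer applies.

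Second, and decisively, the obstruction does not depend on \(h\). Let \(X\) be a holomorphic field on \(Y\) that is tangent to the residual conics off \(L^\circ\). Then \(X\) must vanish on \(L^\circ\). At a general \(x\in L^\circ\), the residual conics through \(x\) lie in different planes \(\Pi\subset\mathbb T_xS\) containing \(L\). These planes meet only along \(L\), and in general the conics are smooth at \(x\) and transverse to \(L\). The only value compatible with all of them is \(X(x)=0\). Hartogs does extend the field across \(L^\circ\), but the extension is forced to vanish there, and extension says nothing about completeness.

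Now restrict \(X\) to a general affine residual conic \(C^\circ\cong\C^*\), which is closed and \(X\)-invariant. There \(X\) vanishes at the two affine points of \(C\cap L\). But the only complete holomorphic fields on \(\C^*\) are \(\alpha z\,\partial_z\), which have no zeros unless \(\alpha=0\). So every complete field of your type is zero.

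By contrast, the obstacle you anticipated is harmless. The Hamiltonian field of a fibrewise quadratic is affine-linear in the plane coordinates, so its flow is a matrix exponential. That flow is entire and holomorphic in the base parameter across singular and even identically zero conics. No vanishing of \(h\) along the discriminant is needed.

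The missing idea is that a spray need not be a one-parameter group. The paper works on the blowup \(\widetilde Y_L=\Bl_{L^\circ}Y\). There it glues the entire local Hamiltonian flows into a spray \(\Phi_L\) on the total space of \(\mathcal N_L\cong\OO_{Z_L}(H+2E_L)|_{\widetilde Y_L}\). The gluing works because the transition factors depend only on the base point, so they are constant along trajectories. It then reparametrizes pointwise: \(\widehat s_L(z,t)=\Phi_L(z,t\eta_L(z))\) with \(\eta_L=s_{D_0}s_{E_L}^2\). This is a spray on a trivial line bundle, defined for all \(t\in\C\), even though its zero-time field \(\mathfrak h_L(\eta_L)\) is not complete, by exactly the argument above. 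It fixes the exceptional divisor pointwise and therefore descends to \(Y\) by normality.

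Even with this repaired, your domination step is only heuristic. You need genuine spanning at \emph{every} \(y\in Y\), using lines that do not pass through \(y\), since the fields vanish on \(L^\circ\). The paper obtains this from Kaliman--Zaidenberg's lemma that lines through a general \(x\) span \(T_xS\). It then transports these tangents to residual conics through \(y\) by the third-point involution of a reduced secant \(x+\rho+y\).

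The paper also proves that the zero-time fields are algebraic, by reflexive extension across \(L^\circ\). This makes the domination loci Zariski open, so that Noetherianity yields a finite family.
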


\medskip
\noindent\textbf{Residual conic fibrations and complement-preserving sprays.}
\smallskip

The residual-conic fibrations are classical and are used by
Kaliman--Zaidenberg in the compact setting
\cite[Proposition~3.7 and Remark~3.8]{KalimanZaidenberg}. Our contribution is
their boundary-sensitive globalization: we convert fibrewise complete
Hamiltonian flows into global holomorphic sprays on the hyperplane complement.
Here an infinitesimal direction is useful only if it integrates for every
complex time and every trajectory remains in the complement.

We meet this requirement by using the Hamiltonian vector fields of the
fibrewise quadratic equations. Their local time parameters transform by a nontrivial
line-bundle cocycle, whose identification allows the flows to glue into global
complement-preserving sprays.  On a cubic surface, the \(27\) classical conic
bundles furnish a finite dominating family.  In higher dimensions, the raw
sprays live on blowups along lines; a vanishing rescaling makes them descend,
after which the tangent-transfer argument of Kaliman--Zaidenberg gives
pointwise domination.

This completes the smooth case. Singular and reducible cubics require different
methods: integral singular cubics are reduced by projection from a singular
point together with Kusakabe's
localization theorem \cite{KusakabeLocalization}, while reducible and
nonreduced cubics reduce to affine complements of degree at most two.  These
arguments are combined in \cref{sec:classification} to complete the proof of
the classification theorem.

\medskip
\noindent\textbf{Relation with compact cubic geometry.}
\smallskip

The preceding construction is rooted in the classical geometry of lines on
smooth projective cubic hypersurfaces; we refer to \cite{HuybrechtsCubic} for
the general theory. Smooth cubic
surfaces are uniformly rational
\cite[Example~2.4]{BogomolovBohning}, and uniformly rational varieties are
algebraically elliptic
\cite[Theorem~1.3]{ArzhantsevKalimanZaidenberg}; see also
\cite{BaneckiRetractRational} for related developments on rationality
properties.  Kaliman and Zaidenberg proved that every smooth cubic
hypersurface of dimension at least two is algebraically elliptic
\cite[Theorem~1.1]{KalimanZaidenberg}. Their construction supplies the two
geometric inputs used in our higher-dimensional argument: tangent directions
of lines through a general point span the tangent space, and the third-point
involution transports these directions to residual conics through an arbitrary
point
\cite[Lemma~3.2, Proposition~3.7 and Remark~3.8]{KalimanZaidenberg}.
These compact-cubic sprays need not preserve a hyperplane complement, since
their images may cross the deleted hyperplane. We retain the tangent-spanning
and tangent-transfer inputs, while the Hamiltonian construction above supplies
the missing boundary control.

\medskip
\noindent\textbf{Broader context and questions.}
\smallskip

The cubic theorem also lies at a natural numerical boundary for the
logarithmic canonical class. For a smooth degree-\(d\) hypersurface
\(C\subset\PP^n\), the logarithmic
canonical class is
\[
K_{\PP^n}\otimes\OO_{\PP^n}(C)\cong\OO_{\PP^n}(d-n-1).\]
For cubic hypersurfaces this bundle is trivial when \(n=2\) and anti-ample
when \(n\geqslant3\).  Such numerical behavior is related to logarithmic
hyperbolicity questions, including the Green--Griffiths--Lang philosophy,
which predicts algebraic degeneracy of entire curves in sufficiently positive
complements \cite{GreenGriffiths}. Such numerical information alone does not
imply the Oka property; the cubic result is proved by the explicit dominating
sprays constructed above.

Forstneri\v c--L\'arusson posed broader low-degree Oka questions
\cite[Question~E, pp.~33--34]{ForstnericLarusson}. Their asymptotic form leads
to the following problem.

\medskip
\noindent\textbf{Question 1.}
Fix \(d\geqslant2\).  For a general smooth hypersurface
\(H\subset\PP^n\) of degree \(d\), are \(H\) and
\(\PP^n\setminus H\) Oka when \(n\) is sufficiently large relative to
\(d\)?  What are the optimal relations between \(n\) and \(d\) in the two
cases?

\medskip
This is an opposite-direction analogue of the Kobayashi hyperbolicity
problem, in both its compact and logarithmic forms: hyperbolicity is expected
when the degree is large relative to the dimension, whereas the question
above places the dimension high relative to the degree.  The analogy is not
a formal converse, and the thresholds for the compact hypersurface and its
complement may differ.  In degree three, a smooth plane cubic is a complex
torus, while higher-dimensional smooth cubics are algebraically elliptic by
Kaliman--Zaidenberg \cite[Theorem~1.1]{KalimanZaidenberg}; hence the compact
hypersurface is Oka for every \(n\geqslant2\).  The present classification
settles the complement in the same range.  Thus the cubic case goes
beyond the asymptotic range and includes the logarithmic Calabi--Yau boundary
of a plane cubic complement.

The classification establishes holomorphic ellipticity, but the sprays produced
by the proof involve matrix exponentials and are not shown to be algebraic.
Recall that an algebraic variety is
\emph{algebraically elliptic} if it admits a dominating spray whose parameter
bundle and spray map are algebraic. This leaves a finer question.

\medskip
\noindent\textbf{Question 2.}
Let \(D\subset\PP^n\), \(n\geqslant3\), be a smooth cubic hypersurface. Is
\(\PP^n\setminus D\) algebraically elliptic?

\medskip
The plane case has a negative answer, while Question~2 remains open; see
Remark~\ref{rem:algebraic-ellipticity}.

A related open problem asks whether every smooth algebraically elliptic
\(n\)-fold admits a surjective algebraic morphism from \(\A^n\); see
\cite{ForstnericSurjective,BarthTruong}. Kusakabe proved that one additional
source dimension always suffices: every such \(n\)-fold admits a surjective
algebraic morphism from \(\A^{n+1}\)
\cite[Theorem~1.2]{KusakabeSurjective}.

\medskip
\noindent\textbf{Organization of the paper.}
\smallskip

Section~\ref{sec:sprays} recalls the notions of ellipticity and subellipticity,
together with the Oka principles used in the proof. Section~\ref{sec:common}
turns the residual-conic family associated with a line into a global
complement-preserving spray by identifying and gluing its line bundle of time
parameters. Sections~\ref{sec:surface} and~\ref{sec:higher} prove
Theorems~\ref{thm:surface-source} and~\ref{thm:higher-source}, respectively.
Sections~\ref{sec:quadratics} and~\ref{sec:singular-projection} treat reducible
and integral singular cubics. Finally, Section~\ref{sec:classification}
completes the classification and proves the two mapping-space applications.
Section~\ref{subsec:classification-proof} proves \cref{thm:main};
\cref{subsec:elliptic-functions} treats degree-three elliptic functions; and
\cref{subsec:cubic-rational-maps} treats cubic rational self-maps of \(\PP^1\).

\section{Ellipticity, sprays, and Oka principles}\label{sec:sprays}

\subsection{Sprays on complex manifolds}

Let \(M\) be a complex manifold.  A \emph{holomorphic spray on \(M\)} is a
triple \((E,p,s)\), where \(p:E\to M\) is a holomorphic vector bundle and
\(s:E\to M\) is a holomorphic map satisfying
\[
s(0_x)=x\qquad (x\in M).
\]
The vertical tangent space of \(E\) at \(0_x\) is naturally identified with
the fibre \(E_x\).  The restriction
\[
(ds)_{0_x}|_{E_x}:E_x\longrightarrow T_xM
\]
is called the \emph{vertical differential} of the spray at \(x\).  The spray
is \emph{dominating} if this map is surjective for every \(x\in M\).

A finite family of sprays \((E_j,p_j,s_j)\), \(j=1,\ldots,m\), is
\emph{dominating} if
\[
T_xM=\sum_{j=1}^{m}(ds_j)_{0_x}(E_{j,x})
\qquad (x\in M).
\]
The manifold \(M\) is \emph{elliptic} if it admits a dominating holomorphic
spray, and \emph{subelliptic} if it admits a finite dominating family. See
\cite[Definition~5.6.13, pp.~229--230]{ForstnericBook}; the elliptic notion
originates in Gromov's work \cite[\S0.5]{Gromov}, and the subelliptic notion
was introduced by Forstneri\v c \cite{ForstnericSub}.

The parameter bundles in these definitions need not be trivial.  When a
finite dominating family is defined on trivial bundles, its sprays can be
composed successively to obtain one dominating spray
\cite[Lemma~2.1]{ForstnericSub}.  We use this composition in the
higher-dimensional smooth case.  On cubic surfaces the natural rank-one
parameter bundles need not be trivial; there we first obtain subellipticity
and then use the Stein converse below.
Throughout the paper, every spray is defined globally on its full parameter
bundle. An individual spray need not dominate at every point, but the finite
families constructed below dominate jointly everywhere on their base manifold.

\subsection{Consequences used in the proof}

We record the Oka principles and localization results used below. For an
algebraic variety, the corresponding algebraic notions are
obtained by requiring the parameter bundles and spray maps to be algebraic.
Kaliman--Zaidenberg proved that algebraic subellipticity and algebraic
ellipticity are equivalent
\cite[Theorem~1.1]{KalimanZaidenbergForum}.

Gromov proved that elliptic manifolds are Oka and, conversely, that Stein Oka
manifolds are elliptic \cite{Gromov}. For projective manifolds, the converse
was recently established by Forstneri\v c and L\'arusson
\cite{ForstnericLarussonElliptic}. Forstneri\v c introduced
subellipticity and proved
\[
\text{elliptic}\Longrightarrow\text{subelliptic}
\Longrightarrow\text{Oka};
\]
see \cite[Theorem~1.1, p.~529]{ForstnericSub} and
\cite[Corollary~5.6.14 and Proposition~5.6.15,
p.~230]{ForstnericBook}. The complement of a projective hypersurface is affine
and hence Stein. Consequently, the Oka property and holomorphic ellipticity
are equivalent for every complement considered here.

We shall also use the Oka principle for holomorphic fibre bundles
\cite[Theorem~5.6.5, p.~225]{ForstnericBook}; its special case for
unramified holomorphic coverings is
\cite[Proposition~5.6.3, p.~224]{ForstnericBook}.  If a holomorphic
fibre bundle has Oka fibre, then its total space is Oka if and only if
its base is Oka.  We apply this to line bundles, \(\C^*\)-bundles, and
finite coverings.  A holomorphic retract of an Oka manifold is Oka as
well; see \cite[Proposition~5.6.8]{ForstnericBook}.

The singular case uses two additional inputs. The first is Kusakabe's Zariski
localization theorem.

\begin{theorem}[Kusakabe]\label{thm:kusakabe-localization}
Let \(M\) be a complex manifold.  If every point of \(M\) has a
Zariski-open Oka neighborhood in \(M\), then \(M\) is Oka.  
\end{theorem}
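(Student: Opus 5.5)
We do not reprove \cref{thm:kusakabe-localization}; the paper cites it from \cite{KusakabeLocalization}. The following is a plan for how one would prove it. It follows Kusakabe's characterization of Oka manifolds by \emph{convex ellipticity} (condition \(\mathrm{Ell}_1\)). This condition says the following. Let \(K\subset\C^N\) be a compact convex set and let \(f\) be a holomorphic map from a neighbourhood of \(K\) to \(M\). Then there is a holomorphic map \(s\colon U\times\C^r\to M\), with \(U\) a neighbourhood of \(K\) and \(s(x,0)=f(x)\), such that \(\partial_t s(x,0)\) is surjective onto \(T_{f(x)}M\) for every \(x\in U\).

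I would first take the equivalence \(\mathrm{Oka}\Leftrightarrow\mathrm{Ell}_1\) as the basic tool. The theorem then reduces to showing that \(\mathrm{Ell}_1\) is Zariski-local.

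Fix \(K\) and \(f\) as above. Since \(f(K)\) is compact, finitely many Zariski-open Oka sets \(M\setminus A_1,\dots,M\setminus A_m\) cover \(f(K)\), where each \(A_i\) is a closed analytic subset. Put \(\Omega_i=f^{-1}(M\setminus A_i)\). Because \(M\setminus A_i\) is Oka, it satisfies \(\mathrm{Ell}_1\). So over suitable convex pieces of \(\Omega_i\) there are dominating sprays \(s_i\) lying over \(f\) with values in \(M\setminus A_i\).

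The main step is to extend each \(s_i\) across \(f^{-1}(A_i)\) without losing domination off that set. The plan is to damp the fibre variable, setting
\[
\tilde s_i(x,t)=s_i\bigl(x,\,g_i(x)\,t\bigr),
\]
where \(g_i\) is a holomorphic function on a neighbourhood of \(K\) that vanishes to high order along \(f^{-1}(A_i)\) and nowhere else near \(K\). One then checks that \(\tilde s_i\) extends holomorphically by \(\tilde s_i(x,t)=f(x)\) on \(f^{-1}(A_i)\). This uses the fact that \(M\setminus A_i\) is Zariski open. Near a point of \(A_i\), the defining functions of \(A_i\) pulled back by \(s_i\) control how fast the spray escapes. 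A high-order zero of \(g_i\) then forces \(\tilde s_i\) into a small coordinate chart, so the extension follows by a Riemann-extension or Hartogs-type argument. This is where I expect the main obstacle. The sprays \(s_i\) are only defined over compact convex pieces, not over all of \(\Omega_i\), so one must arrange uniform estimates, or use Kusakabe's trick of building \(s_i\) on a neighbourhood of \(K\) by induction over a convex exhaustion. In addition, \(g_i\) must be chosen to vanish exactly on \(f^{-1}(A_i)\) near \(K\). Here convexity, hence Stein-ness, of the neighbourhood is used.

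Finally, the damped sprays \(\tilde s_1,\dots,\tilde s_m\) are all defined over \(f\) on a common convex neighbourhood of \(K\). I would compose them successively, as in \cite[Lemma~2.1]{ForstnericSub}, to get one spray \(s\) over \(f\). Its vertical differential at \(t=0\) contains \(g_i(x)\,\partial_t s_i(x,0)\) for each \(i\). Every \(x\) near \(K\) lies in some \(\Omega_i\), where \(g_i(x)\ne0\), so \(s\) is dominating there. This proves \(\mathrm{Ell}_1\) for \(M\), and hence that \(M\) is Oka.
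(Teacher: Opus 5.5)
The paper gives no proof of this statement. It quotes it from \cite[Theorem~1.4]{KusakabeLocalization}, so the only benchmark is Kusakabe's own argument, and you correctly identify its framework (Oka \(\Leftrightarrow\) convex ellipticity). Your plan nevertheless has a genuine gap, at the step that carries all the content: the claim that the damped spray \(\tilde s_i(x,t)=s_i(x,g_i(x)t)\) extends holomorphically across \(f^{-1}(A_i)\). A spray supplied by \(\mathrm{Ell}_1\) of \(M\setminus A_i\) has no controlled behaviour near \(f^{-1}(A_i)\), and Zariski-openness of \(M\setminus A_i\) does not constrain it.

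Here is a concrete counterexample. Take \(M=\C\), \(A=\{0\}\), and \(f=\mathrm{id}\) near the closed unit disc. Then \(s(x,t)=x\exp(te^{1/x})\) is a dominating spray over \(f\) on the punctured neighbourhood, with values in \(\C^*\). Let \(g\not\equiv0\) be holomorphic with \(g(0)=0\), let \(t\neq0\), and put \(h(x)=tg(x)e^{1/x}\). If \(xe^{h}\) were bounded near \(0\), the removable singularity theorem would give \(xe^{h}=x^jv\) with \(v(0)\neq0\). A winding-number count then forces \(j=1\), so \(h\) would be holomorphic at \(0\). This is impossible, because \(e^{1/x}\) has an essential singularity. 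Hence no damping factor, of any vanishing order, rescues an arbitrary \(\mathrm{Ell}_1\) spray. Your assertion that the pulled-back defining functions of \(A_i\) ``control how fast the spray escapes'' is false.

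What is true is narrower. R\"uckert's Nullstellensatz gives \(|g|^m\lesssim\operatorname{dist}(f(x),A_i)\) locally. So a \emph{local} \(M\)-valued spray \(\sigma\) over \(f\), damped as \(\sigma(x,g(x)^kt)\), stays in \(M\setminus A_i\) off \(\{g=0\}\), but only for \(|t|\lesssim|g(x)|^{m-k}\). The missing ingredient is a spray with values in \(M\setminus A_i\), defined for \emph{all} \(t\in\C^N\), that is tame near \(f^{-1}(A_i)\). For instance, it would suffice for it to be close enough to such a damped local spray near the boundary that the Riemann extension theorem applies. Producing it amounts to approximation up to a non-compact boundary, which the Oka property of \(M\setminus A_i\) does not give by itself. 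Your plan names this step (``uniform estimates, or Kusakabe's trick'') but does not carry it out.

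There are also secondary problems in the set-up.
\begin{itemize}
\item \(\Omega_i=f^{-1}(M\setminus A_i)\) is not Stein in general, for example when \(f^{-1}(A_i)\) has components of codimension \(\geqslant2\).
\item In that case no single \(g_i\) can vanish exactly on \(f^{-1}(A_i)\): the zero set of one holomorphic function is empty or of pure codimension one.
\item Sprays produced separately on convex pieces of \(\Omega_i\) do not glue.
\end{itemize}
These points can be repaired. On a convex, hence Stein, neighbourhood, choose finitely many \(g_{ij}\) whose common zero set is \(f^{-1}(A_i)\). Then apply the Stein-manifold form of \(\mathrm{Ell}_1\), which holds for every Oka manifold, on the Stein domains \(\{g_{ij}\neq0\}\). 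After this repair your final composition step would work. The extension problem described above remains, however, and it is the whole difficulty of the theorem.
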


This is \cite[Theorem~1.4]{KusakabeLocalization}. Here
``Zariski-open'' is understood in the holomorphic sense: the complement is
a closed complex subvariety.  We shall also use
two consequences from the same paper: every projective quadric
complement is Oka \cite[Corollary~4.9(1)]{KusakabeLocalization}, and
every integral singular plane-cubic complement is Oka
\cite[Corollary~4.9(2)]{KusakabeLocalization}.

The second input is Hanysz's meromorphic-graph theorem
\cite[Theorem~4.6]{Hanysz}.

Following Hanysz, define the affine graph of a holomorphic map
\(m:X\to\PP^1\) by
\[
\Gamma_m=\{(x,m(x))\in X\times\C:m(x)\ne\infty\}.
\]
Thus, if \(m(x)=\infty\), the graph has no point over \(x\), and the entire
affine fibre \(\{x\}\times\C\) lies in its complement.

\begin{theorem}[Hanysz]\label{thm:hanysz-graph}
Let \(X\) be a complex manifold.  If
\[
m=f+\frac1g:X\longrightarrow\PP^1
\]
for holomorphic functions \(f,g\) on \(X\), then
\[
(X\times\C)\setminus\Gamma_m
\]
is Oka if and only if \(X\) is Oka.
\end{theorem}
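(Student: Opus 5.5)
The first step is a normalization. The fibrewise translation \((x,z)\mapsto(x,z-f(x))\) is a biholomorphism of \(X\times\C\) over \(X\), and it carries \(\Gamma_m\) onto \(\Gamma_{1/g}\). So we may assume \(m=1/g\). Then
\[
Y:=(X\times\C)\setminus\Gamma_{1/g}=\{(x,z)\in X\times\C:\ g(x)z\ne1\}.
\]
This description works at every point. Where \(g(x)\ne0\), the condition \(g(x)z\ne1\) says \(z\ne1/g(x)\). Where \(g(x)=0\), we have \(m(x)=\infty\), the whole fibre \(\{x\}\times\C\) lies in \(Y\), and \(g(x)z=0\ne1\) holds automatically. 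The easy implication follows at once. The map \((x,z)\mapsto(x,0)\) sends \(Y\) into the slice \(X\times\{0\}\subset Y\) and restricts to the identity there. So \(X\) is a holomorphic retract of \(Y\), and it is Oka whenever \(Y\) is, by \cite[Proposition~5.6.8]{ForstnericBook}.

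For the converse, assume \(X\) is Oka. The main tool will be the vertical vector field \(V=(1-g(x)z)\,\partial_z\) on \(Y\). Along its flow the quantity \(w=1-gz\) satisfies \(\dot w=-gw\), so \(w(t)=w_0e^{-g t}\) never vanishes. Hence the flow is complete, preserves \(Y\), and is given explicitly by
\[
\phi_t(x,z)=\Big(x,\; z+(1-g(x)z)\,\tfrac{1-e^{-g(x)t}}{g(x)}\Big),
\]
where the last factor is the entire function \(\sum_{k\ge1}(-1)^{k-1}g^{k}t^{k}/(k!\,g)\), holomorphic in \((x,t)\) even where \(g=0\). Since \(1-gz\ne0\) on \(Y\), the field \(V\) spans the vertical tangent line at every point of \(Y\). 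It therefore gives a rank-one spray on a trivial bundle that dominates in the fibre direction. A related object is the map \(h:X\times\C\to Y\), \(h(x,t)=\phi_t(x,0)\). It is a surjective holomorphic submersion with discrete fibres, and it is a genuine covering away from \(\{g=0\}\).

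It remains to supply the horizontal directions. Since \(Y\) is not a fibre bundle over \(X\), I would work with Kusakabe's characterization of Oka manifolds by condition \(\mathrm{Ell}_1\): every holomorphic map \(\sigma:S\to Y\) from a Stein manifold admits a dominating spray over it. Write \(\sigma=(a,b)\) with \(a:S\to X\). Because \(X\) is Oka, there is a dominating spray \(A:S\times\C^N\to X\) over \(a\). I would lift it to a spray over \(\sigma\) that keeps the invariant \(w=1-gz\) constant. The first attempt is to solve \(1-g(A(s,\zeta))\,z=1-g(a(s))\,b(s)\) for \(z\). This fails where \(g\circ A\) vanishes, so instead I would transport \(b(s)\) with the same flow time. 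The plan is to lift \(\sigma\) through \(h\) to a map \(s\mapsto(a(s),\tau(s))\) and then set
\[
\tilde A(s,\zeta,u)=h\big(A(s,\zeta),\tau(s)+u\big).
\]
Every value of \(\tilde A\) lies in \(Y\) by construction. The \(\zeta\)-directions reproduce the domination of \(A\) on \(X\), and the \(u\)-direction gives the vertical domination.

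The main obstacle is finding the holomorphic lift \(\tau:S\to\C\) with \(h(a,\tau)=\sigma\), that is, solving \(e^{-g(a)\tau}=1-g(a)b\) with \(\tau\) close to \(b\) where \(g(a)\) is small. Off \(\{g\circ a=0\}\) this amounts to choosing a logarithm of the nowhere-vanishing function \(1-g(a)b\) and dividing by \(-g(a)\). The difficulty is to choose that logarithm so that the quotient extends holomorphically across the zero set of \(g\circ a\). My first attempt is local: on a neighbourhood of that zero set the principal branch of \(\log(1-g(a)b)\) is divisible by \(g(a)\), with quotient \(-b+O(g(a))\), and \(\tau\) should exist there. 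I would then glue local branches using the cocycle \(2\pi ik/g(a)\). This is an additive Cousin problem on the Stein manifold \(S\), but its cocycle has poles, so solving it may force shrinking \(S\) to a neighbourhood of a compact \(\mathcal O(S)\)-convex set. That shrinking is harmless for condition \(\mathrm{Ell}_1\). If the gluing cannot be arranged, the fallback is to cover \(Y\) by the Zariski-open sets \(\{g\ne0\}\times\C\setminus\Gamma\) and a neighbourhood of \(\{g=0\}\times\C\), check that each is Oka, and conclude with \cref{thm:kusakabe-localization}.
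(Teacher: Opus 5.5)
The paper does not prove this statement itself; it quotes it from Hanysz \cite[Theorem~4.6]{Hanysz}, so your proposal has to stand on its own. Your normalization and the easy direction via the retraction \((x,z)\mapsto(x,0)\) are correct, and so are the complete field \(V=(1-gz)\partial_z\) and the map \(h\). The converse, however, has a genuine gap. A holomorphic lift \(\tau\) with \(h(a,\tau)=\sigma\) does not exist in general, not even on arbitrarily small neighbourhoods of a compact convex set, so neither shrinking nor Cousin-type gluing can supply it.

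The reason is as follows. If \(S\) is connected and \(g\circ a\not\equiv0\), then \(\{g\circ a\ne0\}\) is connected. Hence a lift is determined by a single branch of \(\log(1-g(a)b)\), up to one global integer. This logarithm takes values in \(2\pi i\mathbb Z\) on \(\{g\circ a=0\}\), and those values can differ between components; when they do, no branch vanishes on all of \(\{g\circ a=0\}\).

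Here is a concrete instance. Take \(X=\C\), \(g(x)=x\), \(S\) any connected neighbourhood of \([0,1]\subset\C\), \(a(s)=s(s-1)\), and \(b(s)=(1-e^{2\pi is})/(s(s-1))\), which is entire. Then \(1-g(a)b=e^{2\pi is}\ne0\), so \(\sigma=(a,b)\) maps into \(Y\). Any lift would satisfy \(-a\tau=2\pi i(s+k)\) on the connected set \(S\setminus\{0,1\}\) for one fixed \(k\in\mathbb Z\). Evaluating at \(s=0\) and at \(s=1\) forces \(k=0\) and \(k=-1\), a contradiction. Since \(Y=\C^2\setminus\{xz=1\}\) is elliptic here, it is your method that fails, not the statement.

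Your fallback does not work as stated either. The piece \((\{g\ne0\}\times\C)\setminus\Gamma\cong(X\setminus g^{-1}(0))\times\C^*\) need not be Oka: for \(X=\C\) and \(g=x(x-1)\) it retracts onto \(\C\setminus\{0,1\}\). You also never specify a Zariski-open Oka neighbourhood for the remaining points.

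The missing idea is to pass to the \(\mathbb Z\)-covering that separates these integer labels. Put
\[
\widetilde Y=\{(x,z,\theta)\in X\times\C\times\C:\ e^{\theta}=1-g(x)z\}.
\]
This is the pullback of \(\exp\colon\C\to\C^*\) under \(1-gz\colon Y\to\C^*\), hence a holomorphic covering of \(Y\). For each \(k\in\mathbb Z\), define
\[
j_k(x,t)=\Bigl(x,\ \frac{1-e^{-g(x)t}}{g(x)},\ 2\pi ik-g(x)t\Bigr).
\]
Each \(j_k\colon X\times\C\to\widetilde Y\) is an injective holomorphic map, and composing \(j_0\) with the covering map gives your \(h\). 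So \(j_k\) is a biholomorphism onto its image \(U_k\), and the complement \(\widetilde Y\setminus U_k=\{g(x)=0,\ \theta\in2\pi i(\mathbb Z\setminus\{k\})\}\) is a closed analytic subset. The sets \(U_k\cong X\times\C\) are Oka and cover \(\widetilde Y\). Hence \(\widetilde Y\) is Oka by \cref{thm:kusakabe-localization}, and \(Y\) is Oka because the Oka property descends along holomorphic coverings \cite[Proposition~5.6.3]{ForstnericBook}.

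So your fallback works once it is applied to \(\widetilde Y\) rather than to \(Y\). On \(Y\) itself the chart \(h\) fails to be injective precisely because of the integer ambiguity that obstructs your lift.
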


\section{The common residual-conic construction}\label{sec:common}

We now carry out the central analytic construction of the smooth case. Blowing
up a line resolves the associated projection and produces an affine-plane
family in which the residual cubic equation is quadratic. We integrate its
fibrewise Hamiltonian vector field and identify the line bundle in which the
time parameter takes values. For cubic surfaces the blowup is trivial; in
higher dimensions the resulting spray must later be descended to the original
complement.

Let \(S\subset\PP^{n+1}\) be a smooth cubic hypersurface of dimension
\(n\geqslant2\), let \(H_0\subset\PP^{n+1}\) be a hyperplane, and put
\[
                         B=S\cap H_0,\qquad Y=S\setminus B.
\]
For the surface theorem we assume that \(B\) is smooth; for
\(n\geqslant3\), no smoothness is imposed on \(B\).  Fix a projective line
\(L\simeq\PP^1\subset S\) not contained in \(H_0\).  Such a line exists
as follows.  If \(n=2\), the smooth cubic surface \(S\) has \(27\) lines
\cite[Chapter~9]{Dolgachev}, and no one of them is contained in \(H_0\)
because \(B=S\cap H_0\) is a smooth plane cubic.  If \(n\geqslant3\),
choose \(x\in Y\) in the nonempty open line-spanning locus of
\cite[Lemma~3.2]{KalimanZaidenberg}. Since \(x\notin H_0\), none of the
lines through \(x\) is contained in \(H_0\); choose any one of the lines
supplied by that lemma. Write
\(L^\circ=L\cap Y\).

\subsection{The ambient blowup and the affine conic}
Let \(L\subset\PP^{n+1}\) be a projective line.  The rational projection
from \(L\) is not defined along \(L\), and we resolve this indeterminacy by
the blowup
\[
 b_L:Z_L=\operatorname{Bl}_L\PP^{n+1}\longrightarrow\PP^{n+1}.
\]
The exceptional divisor is
\[
E_L=b_L^{-1}(L)\simeq\PP(N_{L/\PP^{n+1}}),
\]
which records the normal directions to \(L\) and provides the missing data
needed to extend the projection.  Thus the rational projection from \(L\)
extends to a morphism
\[
p_L:Z_L\longrightarrow\PP^{n-1}.
\]
The fibre of \(p_L\) over a point of \(\PP^{n-1}\) is the plane spanned by
\(L\) and the corresponding normal direction.  In particular, \(p_L\) is a
\(\PP^2\)-bundle.

Choose homogeneous coordinates
\([z_0:z_1:y_2:\cdots:y_{n+1}]\) in which
\[
L=\{y_2=\cdots=y_{n+1}=0\}.
\]
The planes through \(L\) are parametrized
by \([u_2:\cdots:u_{n+1}]\in\PP^{n-1}\), and the blowup is the incidence
variety
\[
Z_L=
\left\{
([z_0:z_1:y],[u])\in\PP^{n+1}\times\PP^{n-1}
\;\middle|\;
y_i u_j=y_j u_i,\quad 2\leqslant i,j\leqslant n+1
\right\}.
\]
The first projection is the blowdown map \(b_L\), while the second projection
is precisely the resolved projection \(p_L\).

We use the convention that \(\PP(E)\) parametrizes one-dimensional
subspaces of the fibres of \(E\). Equivalently,
\[
Z_L\cong
\PP_{\PP^{n-1}}
\bigl(\OO^{\oplus2}\oplus\OO(-1)\bigr),
\]
where the fibre of the tautological bundle \(\OO(-1)\) at \([u]\) is the
line \(\C u\).  Thus the fibre over \([u]\) is
\(\PP(\C^2\oplus\C u)\), namely the plane spanned by \(L\) and the normal
direction \([u]\).  Removing \(D_0\) turns these projective planes into the
affine planes carrying the residual conics.

Let \(H\) denote the divisor class of
\(b_L^*\mathcal O_{\PP^{n+1}}(1)\), and let
\(\widetilde S\) and \(D_0\) denote the strict transforms of \(S\) and \(H_0\),
respectively.  Put
\[
G_L=H-E_L .
\]
The linear system of hyperplanes containing \(L\) defines the resolved
projection \(p_L\), and hence
\[
\mathcal O_{Z_L}(G_L)=p_L^*\mathcal O_{\PP^{n-1}}(1).
\]

Since \(L\not\subset H_0\), its hyperplane section has strict transform
\(D_0\) of class \(H\). Moreover, \(S\) contains \(L\) with multiplicity one:
otherwise a local equation of \(S\) would lie in \(\mathcal I_L^2\), forcing
all its first derivatives to vanish along \(L\). Hence
\begin{equation}\label{eq:divisor-identity}
[D_0]=H,\qquad
[\widetilde S]=3H-E_L=2D_0+G_L .
\end{equation}

The ambient blowup restricts to \(\widetilde S\cong\Bl_LS\). For \(n=2\),
\(L\) is a Cartier divisor on \(S\); blowing up its invertible ideal does not
change the surface, so \(\Bl_LS\cong S\). For
\(n\geqslant3\), it is nontrivial. In all
dimensions,
\[
\widetilde Y_L:=\widetilde S\setminus D_0
=\Bl_{L^\circ}Y .
\]
We write
\[
\beta_L:\widetilde Y_L\longrightarrow Y,\qquad
R_L=E_L\cap\widetilde Y_L .
\]
For \(n=2\), \(\beta_L\) is an isomorphism and \(R_L\) maps isomorphically
onto \(L^\circ\); for \(n\geqslant3\), \(R_L\) is the exceptional divisor.

Every fibre of \(p_L\) is a plane \(\Pi\) containing \(L\). The plane section
\(S\cap\Pi\) has degree three and contains the line \(L\), so its residual
component has degree two and is a conic. Since
\(L\not\subset H_0\), the intersection \(\Pi\cap H_0\) is a line in
\(\Pi\). Hence \(D_0\) restricts to a hyperplane on every
\(\PP^2\)-fibre. Therefore,
\[
\mathcal A_L:=Z_L\setminus D_0
\]
is an affine-plane bundle over \(\PP^{n-1}\), with fibres
\[
\PP^2\setminus\PP^1\cong\C^2 .
\]
Let \(s_{\widetilde S}\) and \(s_{D_0}\) be defining sections of
\(\widetilde S\) and \(D_0\). Since
\(\widetilde S-2D_0\sim G_L\) and \(s_{D_0}\) is nowhere zero on
\(\mathcal A_L\), the quotient
\[
q_L:=\frac{s_{\widetilde S}}{s_{D_0}^{\,2}}
\in
H^0\!\left(
\mathcal A_L,
p_L^*\mathcal O_{\PP^{n-1}}(1)
\right)
\]
is well defined. On each \(\PP^2\)-fibre, \(\widetilde S\) has degree two and
\(D_0\) is the line at infinity; hence \(q_L\) is the dehomogenized quadratic
equation of the residual conic. Its zero set is precisely
\(\widetilde Y_L\). Geometrically, these fibres are the affine
residual conics obtained by intersecting \(S\) with planes through \(L\).

In a local affine trivialization, \(q_L\) is a quadratic polynomial in the
two fibre coordinates whose coefficients depend holomorphically on the base
parameter.

\subsection{Local coordinates and the Hamiltonian time line}

The residual affine conics provide the one-dimensional directions underlying
the spray construction.  We first describe these directions locally and then
identify the line bundle that makes the corresponding Hamiltonian flows
globally well defined.

Let \((\xi_1,\xi_2)\) be affine coordinates on a fibre of
\[
p_L:\mathcal A_L\longrightarrow\PP^{n-1}.
\]
For a quadratic equation
\[
q(\xi_1,\xi_2)=0,
\]
the usual Hamiltonian vector field is
\[
V_q=
\frac{\partial q}{\partial \xi_2}
\frac{\partial}{\partial \xi_1}
-
\frac{\partial q}{\partial \xi_1}
\frac{\partial}{\partial \xi_2}.
\]
Since \(V_q(q)=0\), this vector field is tangent to every level curve of
\(q\), in particular to the residual affine conic.

The formula depends on both the affine fibre coordinates and the local
defining equation. Changing either multiplies the Hamiltonian field by a
transition factor. Thus these local fields do not define an ordinary global
vector field; instead, their time parameters must take values in a line
bundle.

Set
\[
\mathcal Q_L
=
p_L^*\OO_{\PP^{n-1}}(1)|_{\mathcal A_L},
\qquad
V_L=T_{\mathcal A_L/\PP^{n-1}} .
\]
Here \(\mathcal Q_L\) is the line bundle in which the quadratic equation
\(q_L\) takes values, while \(V_L\) is the relative tangent bundle of
\(p_L\): its fibre at \(z\in\mathcal A_L\) is
\[
(V_L)_z=\ker(dp_L)_z,
\]
the tangent space to the affine-plane fibre through \(z\). We denote their
restrictions to \(\widetilde Y_L\) by
\(\mathcal Q_L|_{\widetilde Y_L}\) and \(V_L|_{\widetilde Y_L}\).

Differentiating \(q_L\) only in the fibre directions gives the relative
differential
\[
d_{\mathrm{rel}}q_L
\in
H^0
\left(
\widetilde Y_L,
V_L^\vee\otimes\mathcal Q_L
\right).
\]
Because \(V_L\) has rank two, its determinant line bundle identifies vectors
and covectors fibrewise.  More precisely,
\[
V_L^\vee
\cong
V_L\otimes(\det V_L)^\vee ,
\]
where the identification is induced by the alternating pairing
\[
(v,w)\longmapsto v\wedge w .
\]
Hence
\[
V_L^\vee\otimes\mathcal Q_L
\cong
\operatorname{Hom}
\left(
\det V_L\otimes\mathcal Q_L^{-1},
V_L
\right).
\]

Under this identification, \(d_{\mathrm{rel}}q_L\) becomes the bundle morphism
\begin{equation}\label{eq:ham-morphism}
\mathfrak h_L:
\mathcal N_L\longrightarrow V_L|_{\widetilde Y_L},
\qquad
\mathcal N_L
:=
\left(
\det V_L\otimes\mathcal Q_L^{-1}
\right)|_{\widetilde Y_L}.
\end{equation}
We call \(\mathcal N_L\) the \emph{Hamiltonian time line}: its fibres provide
the parameters of the Hamiltonian flows. To see the construction explicitly,
choose local frames \(\omega\) of \(\det V_L\) and \(e\) of
\(\mathcal Q_L\), and write \(d_{\mathrm{rel}}q_L=\upsilon\otimes e\). The
vector \(v=\mathfrak h_L(\omega\otimes e^{-1})\) is characterized by
\(v\wedge w=\upsilon(w)\omega\) for every \(w\in V_L\). Taking \(w=v\) gives
\(\upsilon(v)=0\), so \(v\) is tangent to the conic. In fibre coordinates,
this is the usual Hamiltonian vector \((q_{\xi_2},-q_{\xi_1})\). Hence
\(d_{\mathrm{rel}}q_L\circ\mathfrak h_L=0\), and the image of
\(\mathfrak h_L\) is tangent to \(\widetilde Y_L\).

To identify its transition functions, we compute the class of the time line.
Since \(L\) has codimension \(n\) in \(\PP^{n+1}\), the blowup formula gives
\[
K_{Z_L}=b_L^*K_{\PP^{n+1}}+(n-1)E_L
=-(n+2)H+(n-1)E_L.
\]
Moreover,
\[
p_L^*K_{\PP^{n-1}}=-nG_L=-n(H-E_L).
\]
Subtracting these classes gives
\begin{equation}\label{eq:relative-canonical}
K_{Z_L/\PP^{n-1}}
=
-2H-E_L,
\qquad
\det T_{Z_L/\PP^{n-1}}
=
\OO_{Z_L}(2H+E_L).
\end{equation}

Since \(V_L\) is the restriction of the relative tangent bundle,
\[
\det V_L
=
\OO_{Z_L}(2H+E_L)|_{\mathcal A_L}.
\]
Together with
\[
\mathcal Q_L
=
\OO_{Z_L}(H-E_L)|_{\mathcal A_L},
\]
we obtain
\begin{equation}\label{eq:time-line-class}
\mathcal N_L
=
\OO_{Z_L}(H+2E_L)|_{\widetilde Y_L}.
\end{equation}

Since \(E_L=H-G_L\), \cref{eq:time-line-class} may be rewritten as
\[
\mathcal N_L
=\OO_{Z_L}(3H-2G_L)|_{\widetilde Y_L}.
\]
On \(\mathcal A_L=Z_L\setminus D_0\), the nowhere-zero section \(s_{D_0}\)
trivializes \(\OO_{Z_L}(H)\). Dividing by \(s_{D_0}^3\) therefore removes
the \(3H\)-term and gives
\begin{equation}\label{eq:time-line-base}
\mathcal N_L
\xrightarrow{\ \sim\ }
p_L^*\OO_{\PP^{n-1}}(-2)|_{\widetilde Y_L}.
\end{equation}
Thus the time line is pulled back from the base. In particular, its transition
factors are constant along every Hamiltonian trajectory, which is the
structural fact that allows the local flows to glue.

Choose \(U_\alpha\subset\PP^{n-1}\) over which \(\mathcal A_L\) and
\(\mathcal Q_L\) are trivial. A point of \(\mathcal A_L\) over \(U_\alpha\)
is written as \((u,\xi_\alpha)\), where \(u\in U_\alpha\) and
\(\xi_\alpha=(\xi_{\alpha,1},\xi_{\alpha,2})\in\C^2\) are affine fibre
coordinates. Thus \(u\) denotes a base variable, \(\xi_\alpha\) the two
fibre variables, and \(\tau_\alpha\) below the Hamiltonian time coordinate.

These affine coordinates are not canonical: two local identifications of
the same fibre with \(\C^2\) may choose different origins. Hence the
transition map on an overlap is generally affine rather than linear.

In a nowhere-zero frame \(e_\alpha\) of \(\mathcal Q_L\), write
\(q_L=q_\alpha e_\alpha\). Then \(q_\alpha\) is quadratic in the fibre
variables and
\begin{equation}\label{eq:local-conic}
\widetilde Y_L|_{U_\alpha}
=
\{
q_\alpha(u,\xi_{\alpha,1},\xi_{\alpha,2})=0
\}.
\end{equation}

A point \(z\in\widetilde Y_L\) is \emph{relative regular} if
\(d_{\mathrm{rel}}q_L(z)\neq0\), equivalently if its residual affine conic
is smooth at \(z\).

On overlaps \(U_\alpha\cap U_\beta\), the coordinates and equations satisfy
\begin{align}
\xi_\beta
&=
A_{\beta\alpha}(u)\xi_\alpha+b_{\beta\alpha}(u),
\\
q_\beta(u,\xi_\beta)
&=
\lambda_{\beta\alpha}(u)
q_\alpha(u,\xi_\alpha),
\end{align}
where \(A_{\beta\alpha}(u)\) is an invertible holomorphic \(2\times2\)
matrix, while \(b_{\beta\alpha}(u)\) records the change of origin, and
\(\lambda_{\beta\alpha}(u)\) is nowhere zero. In particular, the vertical
differential of the coordinate change is \(A_{\beta\alpha}(u)\); the
translation term has zero vertical derivative.

\subsection{The cocycle and gluing of the Hamiltonian flows}

We now compute the transition factor just described. It has two sources: the
change of the line-bundle-valued equation \(q_L\) and the change of the
fibrewise area form. Their quotient is exactly the cocycle of
\(\mathcal N_L\).

In the local coordinates of the previous subsection, the vertical
Hamiltonian vector field associated with
\[
q_\alpha(u,\xi_{\alpha,1},\xi_{\alpha,2})
\]
is
\begin{equation}\label{eq:local-hamiltonian}
V_\alpha
=
(q_\alpha)_{\xi_{\alpha,2}}
\frac{\partial}{\partial\xi_{\alpha,1}}
-
(q_\alpha)_{\xi_{\alpha,1}}
\frac{\partial}{\partial\xi_{\alpha,2}} .
\end{equation}
Let
\[
\phi_{\beta\alpha}
(u,\xi_\alpha)
=
(u,A_{\beta\alpha}(u)\xi_\alpha+b_{\beta\alpha}(u))
\]
be the affine transition map between two local trivializations.  The
corresponding local equations satisfy
\[
q_\beta(u,\xi_\beta)
=
\lambda_{\beta\alpha}(u)
q_\alpha(u,\xi_\alpha).
\]
We will show that
\begin{equation}\label{eq:field-transition}
V_\beta
=
\frac{\lambda_{\beta\alpha}}
{\det A_{\beta\alpha}}
(\phi_{\beta\alpha})_*V_\alpha .
\end{equation}

Formula~\eqref{eq:field-transition} has a simple intrinsic meaning:
\(\lambda_{\beta\alpha}\) records the change of the local defining equation
of the conic, while \(\det A_{\beta\alpha}\) records the change of the
fibrewise area form. Their quotient
\[
\frac{\lambda_{\beta\alpha}}
{\det A_{\beta\alpha}}
\]
is therefore the transition factor of
\(\mathcal N_L=\det V_L\otimes\mathcal Q_L^{-1}\), and the reciprocal factor
is the transformation law of the Hamiltonian time parameter.

We now verify the vector-field transformation formula. Fix a base point
\(u\), and abbreviate
\[
A=A_{\beta\alpha}(u),\qquad
b=b_{\beta\alpha}(u),\qquad
\lambda=\lambda_{\beta\alpha}(u).
\]
We regard the fibre gradient
\[
\nabla_{\xi_\alpha}q_\alpha
\]
as a column vector.  Differentiating
\[
q_\beta(u,A\xi_\alpha+b)
=
\lambda q_\alpha(u,\xi_\alpha)
\]
with respect to \(\xi_\alpha\), the chain rule gives
\[
A^T
\nabla_{\xi_\beta}q_\beta(u,A\xi_\alpha+b)
=
\lambda
\nabla_{\xi_\alpha}q_\alpha(u,\xi_\alpha).
\]
Hence
\begin{equation}\label{eq:gradient-transition}
\nabla_{\xi_\beta}q_\beta(u,A\xi_\alpha+b)
=
\lambda A^{-T}
\nabla_{\xi_\alpha}q_\alpha(u,\xi_\alpha),
\end{equation}
where
\[
A^{-T}=(A^{-1})^T
\]
is the inverse transpose of \(A\).  It appears because the gradient is a
covector, whereas tangent vectors transform by \(A\).

Let
\[
J=
\begin{pmatrix}
0&1\\
-1&0
\end{pmatrix}.
\]
The Hamiltonian vector field can be written in matrix form as
\[
V_\alpha
=
J\nabla_{\xi_\alpha}q_\alpha .
\]
For
\[
A=
\begin{pmatrix}
a_{11}&a_{12}\\
a_{21}&a_{22}
\end{pmatrix},
\qquad
\Delta=\det A,
\]
one has the elementary identity
\[
JA^{-T}
=
\Delta^{-1}
\begin{pmatrix}
-a_{12}&a_{11}\\
-a_{22}&a_{21}
\end{pmatrix}
=
\Delta^{-1}AJ .
\]
Combining this identity with
\cref{eq:gradient-transition}, and observing that the translation term
\(b\) does not contribute to the vertical differential, we obtain
\[
V_\beta
(\phi_{\beta\alpha}(u,\xi_\alpha))
=
\frac{\lambda}{\det A}
A V_\alpha(u,\xi_\alpha),
\]
which is exactly \cref{eq:field-transition}.

Set \(\kappa_{\beta\alpha}=\lambda_{\beta\alpha}/\det A_{\beta\alpha}\),
and denote the local flows by \(\Phi_\alpha^\tau\) and
\(\Phi_\beta^\tau\). Since
\(\kappa_{\beta\alpha}\) depends only on the base parameter \(u\), it is constant
along every vertical Hamiltonian trajectory.  Therefore
\[
\Phi_\beta^\tau\circ\phi_{\beta\alpha}
=
\phi_{\beta\alpha}
\circ
\Phi_\alpha^{\kappa_{\beta\alpha}\tau}.
\]
Equivalently, the local time coordinates satisfy
\begin{equation}\label{eq:time-transition}
\tau_\beta
=
\frac{\det A_{\beta\alpha}}
{\lambda_{\beta\alpha}}
\tau_\alpha .
\end{equation}
If local frames \(\nu_\alpha\) of the time line satisfy
\[
\nu_\beta=
\frac{\lambda_{\beta\alpha}}{\det A_{\beta\alpha}}\nu_\alpha,
\]
then the same vector in a fibre is written
\(\tau_\alpha\nu_\alpha=\tau_\beta\nu_\beta\), which gives
\cref{eq:time-transition}. The same identity ensures that the infinitesimal
motions \(\tau_\beta V_\beta\) and
\(\tau_\alpha(\phi_{\beta\alpha})_*V_\alpha\) agree on overlaps. This is the
transition law of
\(\mathcal N_L=\det V_L\otimes\mathcal Q_L^{-1}\). The factors
\(\det A_{\beta\alpha}/\lambda_{\beta\alpha}\) satisfy the cocycle condition
on triple overlaps and depend only on the base parameter. They are therefore
constant along Hamiltonian trajectories, so the local flows glue on the total
space of \(\mathcal N_L\).

\subsection{The entire flow}

The Hamiltonian fields are affine-linear along the affine-plane fibres. Since
the relative equation \(q_\alpha\) is quadratic in the fibre coordinates, the
local Hamiltonian field has the form
\[
\dot{\xi}=M_\alpha(u)\xi+c_\alpha(u),
\]
where \(M_\alpha(u)\) is a \(2\times2\) matrix and
\(c_\alpha(u)\) is a vector depending holomorphically on the base parameter
\(u\).

Affine-linear differential equations admit entire solutions in the complex
time parameter.  Explicitly, for \(\tau\in\C\), the local flow is
\begin{equation}\label{eq:matrix-flow}
\Phi_\alpha^\tau(u,\xi)
=
\left(
u,
e^{\tau M_\alpha(u)}\xi
+
\int_0^\tau
       e^{(\tau-s)M_\alpha(u)}
       c_\alpha(u)\,ds
\right).
\end{equation}
Since the integrand is entire in the integration variable, the integral is path independent and
depends holomorphically on all variables. Hence \cref{eq:matrix-flow} defines
the flow for every \(\tau\in\C\). By the
Hamiltonian identity
\[
V_\alpha(q_\alpha)=0,
\]
the flow preserves the level sets of \(q_\alpha\).  Hence each trajectory
remains inside the same residual affine conic.

The construction takes place on
\(\widetilde Y_L=\widetilde S\setminus D_0\); no extension of the spray
across \(D_0\) is needed.

By \cref{eq:field-transition,eq:time-transition}, these entire local flows
glue to a global holomorphic spray
\begin{equation}\label{eq:upstairs-spray}
\Phi_L:\mathcal N_L\longrightarrow\widetilde Y_L,
\qquad
\Phi_L(0_z)=z .
\end{equation}

Formula~\eqref{eq:matrix-flow} involves only the matrix exponential and an
integral; it never divides by the discriminant of the quadratic equation.
Hence the spray remains holomorphic when a residual conic degenerates. For
example, the smooth parabola \(q=y-x^2\) gives
\[
V=
\frac{\partial}{\partial x}
+
2x\frac{\partial}{\partial y},
\qquad
\Phi^\tau(x,y)
=
(x+\tau,y+2\tau x+\tau^2).
\]
The nodal conic \(q=xy\) gives the complete field
\(V=x\partial_x-y\partial_y\), which vanishes at the node. If \(q_L\)
vanishes identically on an affine-plane fibre, then so does its Hamiltonian
field, and the spray is the identity on that fibre. The same matrix formula
therefore remains valid across singular residual conics, including double
lines and the identically zero quadratic.

\begin{proposition}[Common Hamiltonian spray]\label{prop:common-spray}
For every \(n\geqslant2\) and every line \(L\subset S\) not contained in
\(H_0\), the map
\[
\Phi_L:\mathcal N_L\longrightarrow\widetilde Y_L
\]
is a holomorphic spray defined on all of \(\mathcal N_L\). It is vertical for
\(p_L\), preserves the residual affine conics, and at every relative regular
point \(z\) satisfies
\[
(d\Phi_L)_{0_z}(\mathcal N_{L,z})
=
\ker d(p_L|_{\widetilde Y_L})_z .
\]
\end{proposition}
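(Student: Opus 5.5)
The proof assembles the local constructions of this section and checks four properties: that \(\Phi_L\) is well defined, that it is a spray, that it is vertical and preserves conics, and that it has the stated vertical differential at relative regular points.

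\emph{Well-definedness and holomorphy.} Over each \(U_\alpha\) I define \(\Phi_L\) on \(\mathcal N_L|_{U_\alpha}\) by sending \(\tau_\alpha\nu_\alpha\) at \(z=(u,\xi)\) to \(\Phi_\alpha^{\tau_\alpha}(u,\xi)\), using \eqref{eq:matrix-flow}. This is holomorphic in \((u,\xi,\tau_\alpha)\), because the matrix exponential and the integral are entire in \(\tau\) and holomorphic in the coefficients \(M_\alpha(u),c_\alpha(u)\). These coefficients are polynomial in the coefficients of \(q_\alpha\), so nothing is divided by a discriminant and degenerate fibres cause no trouble. Since \(V_\alpha(q_\alpha)=0\), the flow preserves every level set of \(q_\alpha\), in particular \(\{q_\alpha=0\}=\widetilde Y_L|_{U_\alpha}\). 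So the image lies in \(\widetilde Y_L\) and stays in the same fibre of \(p_L\) and on the same residual affine conic. Agreement on overlaps follows from \eqref{eq:field-transition}: \(\kappa_{\beta\alpha}\) depends only on \(u\), so the pushed-forward field \(\kappa_{\beta\alpha}(\phi_{\beta\alpha})_*V_\alpha\) is again a complete affine field on each fibre. Uniqueness of integral curves then gives \(\Phi_\beta^\tau\circ\phi_{\beta\alpha}=\phi_{\beta\alpha}\circ\Phi_\alpha^{\kappa_{\beta\alpha}\tau}\) for all \(\tau\in\C\), not just small \(\tau\). Combined with the time transition law \eqref{eq:time-transition}, this shows the local maps agree on overlaps of the total space of \(\mathcal N_L\). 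Finally \(\Phi_L(0_z)=z\), because the flow at time zero is the identity.

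\emph{Vertical differential.} At \(0_z\), differentiating \(\tau\mapsto\Phi_\alpha^\tau(z)\) at \(\tau=0\) gives \((d\Phi_L)_{0_z}(\nu_\alpha)=V_\alpha(z)\). In intrinsic terms, this is \(\mathfrak h_L\) from \eqref{eq:ham-morphism} evaluated on the frame \(\nu_\alpha=\omega\otimes e^{-1}\). The image is therefore spanned by \(V_\alpha(z)=J\nabla_\xi q_\alpha(z)\), which is nonzero exactly when \(d_{\mathrm{rel}}q_L(z)\neq0\), i.e.\ at relative regular points.

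It remains to identify this line with \(\ker d(p_L|_{\widetilde Y_L})_z\). I would argue by dimensions. That kernel equals \(T_z\widetilde Y_L\cap (V_L)_z\), and \(V_L\) has rank two. At a relative regular point the fibre \(\widetilde Y_L\cap p_L^{-1}(p_L(z))\) is the curve \(\{q_\alpha(u,\cdot)=0\}\), which is smooth at \(z\). The kernel is contained in \(\ker (d_{\mathrm{rel}}q_L)_z\), which is one-dimensional. Conversely, the kernel contains the tangent line of the smooth fibre curve, and that line is \(\C V_\alpha(z)\). Hence both sides are the line \(\C V_\alpha(z)\), which gives equality.

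The step that needs care is the identity \(T_z\widetilde Y_L\cap(V_L)_z=\ker(d_{\mathrm{rel}}q_L)_z\). The reverse inclusion does not require smoothness of \(\widetilde Y_L\) at \(z\): \(\widetilde Y_L\) is the zero set of \(q_L\), and at a relative regular point \(q_L\) already has nonzero differential, so \(\widetilde Y_L\) is smooth there and \(T_z\widetilde Y_L=\ker(dq_L)_z\). Intersecting with the vertical directions gives \(\ker(d_{\mathrm{rel}}q_L)_z\), which settles the identity.
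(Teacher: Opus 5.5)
Your proposal is correct and follows essentially the same route as the paper. You glue the entire local matrix flows via the cocycle \(\kappa_{\beta\alpha}\), use \(V_\alpha(q_\alpha)=0\) for conic preservation, and identify the vertical differential with \(\mathfrak h_L\), whose image spans the one-dimensional kernel of \(d(p_L|_{\widetilde Y_L})_z\) at relative regular points. Your uniqueness-of-integral-curves argument for all \(\tau\in\C\) and your explicit dimension count for that kernel only make precise what the paper's short proof asserts.
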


\begin{proof}
The local flows glue by
\cref{eq:field-transition,eq:time-transition}, and
\cref{eq:matrix-flow} shows that the resulting spray is entire in the time
parameter, including over singular fibres. Since it is vertical and preserves
\(q_L\), it remains on each residual affine conic. Its vertical differential
at the zero section is the Hamiltonian morphism \(\mathfrak h_L\). At a
relative regular point, \(p_L|_{\widetilde Y_L}\) is a submersion whose kernel
is the one-dimensional tangent space to the smooth affine conic; the nonzero
vector \(\mathfrak h_L\) spans this kernel.
\end{proof}

\section{The surface case: the 27 lines}
\label{sec:surface}

We first specialize the common construction to the borderline case \(n=2\).
Thus \(S\subset\PP^3\) is a smooth cubic surface and
\(B=S\cap H_0\) is a smooth plane cubic. The \(27\) lines on \(S\) will
produce conic-bundle sprays whose directions jointly span \(TY\). Here
\(B\sim-K_S\), so \(K_S+B=0\); this is the logarithmic Calabi--Yau boundary
case.

Since a smooth plane cubic is irreducible, the hyperplane \(H_0\) contains
none of the \(27\) lines on \(S\). Since \(B\sim H\) and \(H\cdot L=1\), the
scheme-theoretic intersection \(B\cap L\) has length one and is therefore a
single reduced point. In particular, the affine part
\[
L^\circ:=L\setminus B
\]
is isomorphic to \(\C\).

\subsection{Conic bundles associated with lines}

Let \(L\subset S\) be one of the \(27\) lines. Choose linear forms
\(\ell_0,\ell_1\) with \(L=\{\ell_0=\ell_1=0\}\), and write
\[
F_S=\ell_0\Theta_0+\ell_1\Theta_1,
\]
for quadratic forms \(\Theta_0,\Theta_1\). These forms have no common zero on \(L\),
since such a point would be singular on \(S\). Hence projection from \(L\)
extends to the conic-bundle morphism
\begin{equation}\label{eq:surface-conic-map}
\begin{gathered}
\pi_L:S\longrightarrow\PP^1,\\
\pi_L(a)=[\ell_0(a):\ell_1(a)]\quad(a\notin L),\\
\pi_L(a)=[-\Theta_1(a):\Theta_0(a)]\quad(a\in L).
\end{gathered}
\end{equation}

On \(S\), the equation \(\ell_0\Theta_0+\ell_1\Theta_1=0\) shows that the limiting
ratio \([\ell_0:\ell_1]\) along \(L\) is \([-\Theta_1:\Theta_0]\). Since \(\Theta_0\) and \(\Theta_1\) have no
common zero on \(L\), this formula is defined everywhere and proves that the
projection extends across \(L\).

A plane containing \(L\) cuts the cubic surface into the fixed line \(L\) and
a residual conic.  Hence the fibres of \(\pi_L\) have divisor class
\[
H-L .
\]
This is the classical conic bundle structure associated with a line on a
smooth cubic surface.

The boundary curve \(B\) is a bisection because
\[
B\cdot(H-L)
=
H\cdot(H-L)
=
3-1=2.
\]
Indeed, \(B\) contains no fibre component, so the restriction of \(\pi_L\) to
\(B\) has degree equal to this intersection number. Thus every residual conic
meets \(B\) in a scheme of length two; after removing this intersection, it
becomes the affine conic carrying the spray of \cref{prop:common-spray}.

\subsection{Two skew conic bundles dominate}

Choose six pairwise skew lines on \(S\) and contract them. The resulting
blowdown \(S\to\PP^2\) identifies \(S\) with the blowup of \(\PP^2\) at six
points in general position. We use the standard divisor-class description of
the \(27\) lines and their incidence relations; see
\cite[Chapter~9]{Dolgachev}.

Let
\(\ell\) be the pullback of the class of a line in \(\PP^2\), and let
\(e_1,\ldots,e_6\) be the exceptional curves.  Then
\[
\ell^2=1,\qquad
\ell\cdot e_i=0,\qquad
e_i\cdot e_j=-\delta_{ij},
\]
and
\[
H=-K_S=3\ell-\sum_{i=1}^6e_i .
\]

The \(27\) lines on \(S\) have divisor classes
\begin{equation}\label{eq:line-classes}
e_i,\qquad
L_{ij}=\ell-e_i-e_j,\qquad
Q_i=2\ell-\sum_{k\ne i}e_k .
\end{equation}
Every line meets ten other lines and is skew to sixteen, while two skew
lines have exactly five common transversals.

Let \(L,M\subset S\) be skew lines.  The two conic bundles define the morphism
\[
\Psi_{L,M}:=(\pi_L,\pi_M):
S\longrightarrow\PP^1\times\PP^1 .
\]
The fibre classes are \(H-L\) and \(H-M\), and therefore
\begin{equation}\label{eq:fibre-intersection}
(H-L)\cdot(H-M)
=
H^2-H\cdot L-H\cdot M+L\cdot M
=
1 .
\end{equation}
The two fibre classes are effective, and their intersection number is one;
hence fibres from the two pencils cannot be disjoint. Thus every fibre of
\(\pi_L\) meets every fibre of \(\pi_M\), so
\(\Psi_{L,M}\) is surjective. Since general fibres meet in one point, it is
birational.

To locate the failure of local invertibility, let \(\Gamma\) be a curve
contracted by the birational morphism \(\Psi_{L,M}\). The intersection form is
negative definite on the exceptional locus of a birational morphism of smooth
surfaces, so \(\Gamma^2<0\). Since \(K_S=-H\), adjunction gives
\[
\Gamma^2=2p_a(\Gamma)-2+H\cdot\Gamma ,
\]
and \(H\cdot\Gamma\geqslant1\) since \(H\) is ample. Hence
\(p_a(\Gamma)=0\) and \(H\cdot\Gamma=1\), so \(\Gamma\) is a line in
the cubic embedding.

We now determine which lines are contracted. Let
\(N\neq L,M\) be a line on \(S\).  The curve \(N\) is contracted by
\(\Psi_{L,M}\) precisely when it is contained in fibres of both conic bundle
structures.  Since the fibre classes of \(\pi_L\) and \(\pi_M\) are
\(H-L\) and \(H-M\), respectively, this is equivalent to
\[
(H-L)\cdot N=(H-M)\cdot N=0 .
\]
For every line \(N\subset S\), one has \(H\cdot N=1\).  Hence
\[
(H-L)\cdot N=1-L\cdot N,
\qquad
(H-M)\cdot N=1-M\cdot N .
\]
Therefore the above condition is equivalent to
\[
L\cdot N=M\cdot N=1,
\]
which means precisely that \(N\) intersects both \(L\) and \(M\).

The two distinguished lines are not contracted: since \(L\cdot M=0\),
\[
(H-M)\cdot L=(H-L)\cdot M=1.
\]

Consequently, the exceptional locus of \(\Psi_{L,M}\) is the union
of the lines meeting both \(L\) and \(M\):
\begin{equation}\label{eq:surface-exceptional-locus}
\operatorname{Exc}(\Psi_{L,M})
=
\bigcup_{\substack{N\subset S\ \mathrm{a\ line}\\
N\cdot L=N\cdot M=1}}N .
\end{equation}

\begin{lemma}\label{lem:skew-pair}
For every point \(a\in S\), there exist skew lines
\[
L,M\subset S
\]
such that
\[
\Psi_{L,M}:S\longrightarrow\PP^1\times\PP^1
\]
is a local biholomorphism at \(a\).
\end{lemma}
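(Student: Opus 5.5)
The approach is to reduce the statement to a combinatorial condition on lines. By \cref{eq:surface-exceptional-locus}, for skew lines \(L,M\) the exceptional locus of \(\Psi_{L,M}\) is the union of the lines meeting both \(L\) and \(M\). Away from this locus \(\Psi_{L,M}\) is a local biholomorphism. Indeed, \(\Psi_{L,M}\) is a birational morphism between smooth projective surfaces, so by Zariski's main theorem (equivalently, the factorization of such morphisms into point blowups) it is an isomorphism onto its image outside the union of contracted curves. So it suffices to find, for each \(a\in S\), skew lines \(L,M\) such that no line through \(a\) meets both \(L\) and \(M\). The lines \(L,M\) themselves may pass through \(a\), since only lines \(N\neq L,M\) can be contracted.

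Let \(N_1,\dots,N_k\) be the lines of \(S\) through \(a\). A smooth cubic surface has at most three lines through a point, and these lie in the tangent plane \(T_aS\). If \(k=0\), any skew pair works. If \(k\geqslant1\), the lines through \(a\) are contained in some tritangent plane. For \(k=1\), every line lies in a tritangent plane (there are five planes through it). For \(k\geqslant2\), the lines are coplanar in \(T_aS\), and their residual is the third line. So it suffices to prove the following: for every tritangent plane with lines \(N_1+N_2+N_3=H\), there are skew lines \(L,M\), distinct from the \(N_i\), such that for each \(i\), at most one of \(L,M\) meets \(N_i\).

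Note that the naive choice of \(L\) skew to all three \(N_i\) is impossible. Any other line \(N\) has \(H\cdot N=1=\sum_i N\cdot N_i\), so it meets exactly one \(N_i\). Instead, I would choose \(L\) meeting \(N_3\) and \(M\) meeting \(N_1\), with \(L\cdot M=0\). Then \(N_1\) meets only \(M\), \(N_3\) meets only \(L\), and \(N_2\) meets neither.

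Since the Weyl group \(W(E_6)\) acts transitively on the \(45\) tritangent planes, and the three lines of a plane can be permuted, we may take \(N_1=e_1\), \(N_2=Q_2\), \(N_3=L_{12}\). Using \cref{eq:line-classes}, one checks that \(e_1+Q_2+L_{12}=3\ell-\sum e_i=H\). Take \(L=L_{34}\) and \(M=Q_5\). The intersection numbers are
\[
L_{34}\cdot L_{12}=1,\quad L_{34}\cdot e_1=L_{34}\cdot Q_2=0,\quad Q_5\cdot e_1=1,\quad Q_5\cdot L_{12}=Q_5\cdot Q_2=0,\quad L_{34}\cdot Q_5=0,
\]
so the required pair exists.

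The main point needing care is the justification of the local-biholomorphism criterion. It relies on the identification of the exceptional locus already established in \cref{eq:surface-exceptional-locus}, together with the standard fact that a birational morphism of smooth surfaces is étale exactly off its contracted curves. The remaining step is the classical fact about at most three coplanar lines through a point, which follows because these lines lie in \(T_aS\cap S\), a plane cubic.
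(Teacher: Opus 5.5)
Your proof is correct. Its first half matches the paper's: using \cref{eq:surface-exceptional-locus} and the fact that a birational morphism of smooth surfaces is an open immersion off its contracted curves, you reduce to finding skew \(L,M\) such that no line through \(a\) other than \(L,M\) meets both. The routes differ in the case analysis.

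The paper treats the configurations separately. When \(a\) lies on one or two lines, it takes \(L\) to be a line through \(a\), which is never contracted, and pairs it with a suitable skew \(M\); for instance \(L=e_1\), \(M=e_3\) against \(A_2=L_{12}\). Only in the Eckardt case does it use your triangle trick. There it finds \(M\) by counting: the skew pair \(L,A_2\) has five common transversals, namely \(A_1\) and four of the eight external lines meeting \(A_2\). The other four external lines meeting \(A_2\) are therefore skew to \(L\), and any of them can serve as \(M\).

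You instead place the lines through \(a\) in a tritangent plane in every case and settle the triangle once. You do this by transitivity of \(W(E_6)\) on the \(45\) tritangent planes and the explicit pair \(L_{34},Q_5\) against \(\{e_1,Q_2,L_{12}\}\); your intersection numbers check out.

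Your route is more uniform and gives a concrete witness. Its cost is the appeal to Weyl-group transitivity. This is legitimate, because \(W(E_6)\) acts by isometries of \(\operatorname{Pic}(S)\) fixing \(K_S\) and hence permutes the \(27\) line classes preserving intersections. The paper's counting argument never uses that the three lines are concurrent. It works for an arbitrary triangle using only the incidence numbers \(10\) and \(5\), so it would have let you drop the Weyl group entirely.

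A minor point: your condition on the triangle is symmetric in \(N_1,N_2,N_3\). Transitivity on unordered tritangent planes therefore suffices, and the remark about permuting the three lines is unnecessary.
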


\begin{proof}
By \eqref{eq:surface-exceptional-locus}, it suffices to choose skew lines
\(L,M\) such that no line through \(a\) meets both of them.

A point of a smooth cubic surface lies on at most three lines, since all such
lines are contained in the tangent-plane section, which is a plane cubic.
We consider the possible configurations.

If \(a\) lies on no line, any skew pair works.  If \(a\) lies on a unique line
\(A\), choose \(L=A\) and any line \(M\) skew to \(A\).  Then no line through
\(a\) can meet both \(L\) and \(M\).

Suppose that \(a\) lies on two lines \(A_1,A_2\). After relabelling the
standard line classes, take \(A_1=e_1\) and \(A_2=L_{12}\). Then \(M=e_3\) is
skew to both, and with \(L=A_1\), neither line through \(a\) meets both \(L\)
and \(M\).

It remains to consider the Eckardt case, where three lines
\(A_1,A_2,A_3\) pass through \(a\).  The tangent-plane section gives
\[
A_1+A_2+A_3\sim H .
\]
Every other line meets exactly one of the \(A_i\), and each \(A_i\) meets
eight such external lines, meaning lines not among \(A_1,A_2,A_3\). Choose an
external line \(L\) meeting \(A_1\).
The skew pair \(L,A_2\) has five common transversals: \(A_1\) and four of the
eight external lines meeting \(A_2\). The other four external lines meeting
\(A_2\) are therefore disjoint from \(L\); choose one of them as \(M\). Thus
\[
L\cdot A_1=M\cdot A_2=1,
\]
while all other intersections between
\(\{L,M\}\) and \(\{A_1,A_2,A_3\}\) vanish.  Thus none of the three lines
through \(a\) meets both \(L\) and \(M\).

In all cases we have chosen skew lines \(L,M\) such that
\(a\notin\operatorname{Exc}(\Psi_{L,M})\).  Since
\(\Psi_{L,M}\) is a birational morphism between smooth surfaces and is an
isomorphism away from its exceptional locus, it is a local biholomorphism at
\(a\).
\end{proof}

\subsection{The finite family of surface sprays}

For every line \(L\subset S\), identify \(\widetilde Y_L\) with \(Y\) via
\(\beta_L\), and denote the transported spray of
\cref{prop:common-spray} by
\begin{equation}\label{eq:surface-line-spray}
s_L:\mathcal N_L\longrightarrow Y .
\end{equation}
Under this identification, \(p_L=\pi_L\).

\begin{proposition}\label{prop:surface-line-spray}
For every line \(L\subset S\), the map \(s_L\) is a rank-one holomorphic
spray. At every point \(a\in Y\) where \(\pi_L\) is a submersion,
\[
(ds_L)_{0_a}(\mathcal N_{L,a})
=
\ker(d\pi_L)_a .
\]
Thus its infinitesimal direction is tangent to the residual affine conic.
\end{proposition}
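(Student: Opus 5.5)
This is essentially a specialization of \cref{prop:common-spray} to \(n=2\), transported through \(\beta_L\). I will verify three things in order.

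\textbf{Step 1: the spray is defined on a rank-one bundle over \(Y\).} For \(n=2\), \(\widetilde Y_L=\widetilde S\setminus D_0\). Since \(L\) is a Cartier divisor on the smooth surface \(S\), the map \(\beta_L:\widetilde Y_L\to Y\) is a biholomorphism. Hence \(\mathcal N_L\), transported by \(\beta_L\), is a holomorphic line bundle on \(Y\). The map \(s_L=\beta_L\circ\Phi_L\circ(\text{bundle identification})^{-1}\) is holomorphic on the whole total space and satisfies \(s_L(0_a)=a\), because \(\Phi_L(0_z)=z\) by \cref{eq:upstairs-spray}. So \(s_L\) is a rank-one holomorphic spray in the sense of \cref{sec:sprays}. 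Entirety in the time parameter, including over degenerate residual conics, is part of \cref{prop:common-spray}.

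\textbf{Step 2: \(p_L\) corresponds to \(\pi_L\).} On \(S\setminus L\), \(p_L\circ\beta_L^{-1}\) is the projection \([\ell_0:\ell_1]\). By continuity it agrees with the extension \eqref{eq:surface-conic-map} along \(L^\circ\). This is the statement \(p_L=\pi_L\) recorded before the proposition.

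\textbf{Step 3: submersion points are relative regular points.} This is the only point needing care. Let \(a\in Y\) be a point where \(\pi_L\) is a submersion. The fibre \(\pi_L^{-1}(\pi_L(a))\cap Y\) is the zero set in the affine plane \(\C^2=p_L^{-1}(u)\setminus D_0\) of the fibre quadratic \(q_\alpha(u,\cdot)\). Here one must check that this scheme-theoretic fibre coincides with the residual conic. That holds because \(\widetilde S\) is the zero locus of \(q_L\) in \(\mathcal A_L\), so \(\widetilde S\cap p_L^{-1}(u)\) is cut out by \(q_\alpha(u,\cdot)\).

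Now suppose \(d_{\mathrm{rel}}q_L(a)=0\), i.e. all fibre partials of \(q_\alpha\) vanish at \(a\). Then \(dq_\alpha(a)\) has only base components. Locally \(Y=\{q_\alpha=0\}\) is a smooth surface, so \(dq_\alpha(a)\ne0\), and \(dq_\alpha(a)=c\,du\) with \(c\neq0\). Consequently the tangent space \(T_aY=\ker dq_\alpha(a)\) is exactly the vertical plane, and \(d(\pi_L)_a=du|_{T_aY}\) vanishes. This contradicts the submersion assumption. Hence \(a\) is relative regular.

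\cref{prop:common-spray} then gives that the vertical differential of \(s_L\) at \(0_a\) spans \(\ker(d\pi_L)_a\). This kernel is the one-dimensional tangent line to the smooth affine residual conic through \(a\), which yields the final sentence of the statement.
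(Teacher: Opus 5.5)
Your proposal is correct and follows the paper's route: the paper's proof simply invokes \cref{prop:common-spray} together with the identification \(p_L=\pi_L\) under \(\widetilde Y_L\cong Y\). Your Step~3 proves that submersion points of \(\pi_L\) are relative regular points, using that \(q_\alpha\) is a reduced local equation of the smooth surface \(\widetilde Y_L\). This makes explicit the one implication the paper leaves tacit.
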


\begin{proof}
This follows directly from \cref{prop:common-spray}, since
\(p_L=\pi_L\) under the identification \(\widetilde Y_L=Y\).
\end{proof}

\begin{proof}[Proof of \cref{thm:surface-source}]
Let \(s_L:\mathcal N_L\to Y\) be the spray associated with each of the
\(27\) lines. For \(a\in Y\), \cref{lem:skew-pair} gives skew lines \(L,M\)
such that \((\pi_L,\pi_M)\) is a local biholomorphism at \(a\). Therefore
\[
\ker(d\pi_L)_a+\ker(d\pi_M)_a=T_aY.
\]
By \cref{prop:surface-line-spray}, these two lines are the infinitesimal
directions of \(s_L\) and \(s_M\). The \(27\) sprays consequently form a
finite dominating family. Thus \(Y\) is subelliptic and Oka; being affine and
Stein, it is elliptic by \cref{sec:sprays}.
\end{proof}

\section{The higher-dimensional case}
\label{sec:higher}

In higher dimensions the common spray lives on a nontrivial blowup, so the
essential new difficulty is descent. For a line \(L\subset S\) not contained
in \(H_0\), \cref{sec:common} gives a spray
\[
\Phi_L:\mathcal N_L\longrightarrow\widetilde Y_L .
\]
Since \(\beta_L:\widetilde Y_L\to Y\) is a genuine blowup, \(\Phi_L\) need not
be constant on its exceptional fibres and therefore does not descend
directly. We multiply the time parameter by a section of \(\mathcal N_L\)
vanishing on the exceptional divisor. The modified spray then fixes that
divisor pointwise and descends holomorphically to \(Y\). The
Kaliman--Zaidenberg tangent-transfer mechanism then gives pointwise
domination, and Noetherianity reduces the resulting family to a finite one.

\subsection{Exceptional scaling and proper-fibre descent}

Let \(R_L=E_L\cap\widetilde Y_L\) be the exceptional divisor. By
\cref{eq:time-line-class},
\[
\mathcal N_L
=
\mathcal O_{Z_L}(H+2E_L)|_{\widetilde Y_L}.
\]
Hence the defining sections of \(D_0\) and \(E_L\) give
\begin{equation}\label{eq:eta}
\eta_L:=s_{D_0}s_{E_L}^{2}|_{\widetilde Y_L}
\in H^0(\widetilde Y_L,\mathcal N_L).
\end{equation}
The exponent two is dictated by the \(2E_L\)-term in the class of
\(\mathcal N_L\); it is not an extra vanishing requirement for descent.
In particular, \(\eta_L|_{R_L}=0\). The modified spray
\begin{equation}\label{eq:modified-upstairs-spray}
\widehat s_L:\widetilde Y_L\times\C\longrightarrow\widetilde Y_L,
\qquad
\widehat s_L(z,t)=
\Phi_L(z,t\eta_L(z)).
\end{equation}
Here \(t\) is the coordinate on the trivial parameter line; multiplication
by \(\eta_L(z)\) turns it into the Hamiltonian time
\(t\eta_L(z)\in\mathcal N_{L,z}\). Moreover, \(\eta_L\) vanishes on \(R_L\); hence
\(\widehat s_L\) fixes \(R_L\) pointwise and has the fibrewise constancy
required for descent.

\begin{lemma}[Exceptional-fibre constancy]
\label{lem:exceptional-constancy}

Let \(\beta:\widehat X\to X\) be the blowup of a smooth centre, with
exceptional divisor \(E\), and let \(\Phi:\mathcal N\to\widehat X\) be a
holomorphic spray. If \(\eta\in H^0(\widehat X,\mathcal N)\) vanishes along
\(E\), then
\[
\widehat s(z,t)=\Phi(z,t\eta(z))
\]
fixes \(E\) pointwise, and \(\beta\circ\widehat s\) is constant on every
fibre of \(\beta\times\operatorname{id}_{\C}\).

\end{lemma}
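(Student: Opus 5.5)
The proof splits into two claims. First, \(\widehat s\) fixes \(E\) pointwise. Second, \(\beta\circ\widehat s\) is constant on the fibres \(\beta^{-1}(x)\times\{t\}\) of \(\beta\times\id_{\C}\).

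The first claim is immediate from the definition of a spray. For \(z\in E\) we have \(\eta(z)=0\), so \(t\eta(z)=0_z\). Since \(\Phi(0_z)=z\), we get \(\widehat s(z,t)=z\) for every \(t\in\C\). Fibres of \(\beta\times\id_{\C}\) over points \((x,t)\) with \(x\) outside the centre are single points, so nothing needs checking there.

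Now take \(x\) in the centre and fix \(t\in\C\). The fibre is \(\beta^{-1}(x)\times\{t\}\subset E\times\{t\}\). By the first claim, \(\widehat s\) restricts to the identity (inclusion) on this fibre. Hence \(\beta\circ\widehat s\) maps \(\beta^{-1}(x)\times\{t\}\) onto \(\beta(\beta^{-1}(x))=\{x\}\), so it is constant there.

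The only point I would check carefully is that the statement needs only set-theoretic constancy. This is the input needed for holomorphic descent: \(\widehat X\times\C\to X\times\C\) is a proper modification onto a normal (smooth) target, and its fibres are connected. So a holomorphic map constant on fibres factors through a continuous map, which is holomorphic off the centre and hence holomorphic by Riemann extension, the centre having codimension at least two. The lemma as stated, however, asks only for the constancy, and that follows from the pointwise fixing above.

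I do not expect any real obstacle. The one subtlety is purely notational: \(\eta(z)\) lies in \(\mathcal N_z\), and vanishing along \(E\) means \(\eta(z)=0_z\) for \(z\in E\). This uses only set-theoretic vanishing on \(E\), not any order of vanishing.
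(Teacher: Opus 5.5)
Your proof is correct and follows the paper's argument: \(\eta(z)=0_z\) on \(E\) gives \(\widehat s(z,t)=\Phi(0_z)=z\). Since every fibre of \(\beta\) is either a single point or contained in \(E\), \(\beta\circ\widehat s\) is constant on the fibres of \(\beta\times\operatorname{id}_{\C}\).
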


\begin{proof}
For \(z\in E\), the equality \(\eta(z)=0_z\) gives
\(\widehat s(z,t)=\Phi(z,0_z)=z\). Thus \(E\) is fixed pointwise.
Every fibre of \(\beta\) is either a singleton or is contained in \(E\);
hence \(\beta\circ\widehat s\) is constant on the fibres of
\(\beta\times\operatorname{id}_{\C}\).
\end{proof}

We apply this observation to \(\beta_L:\widetilde Y_L\to Y\) through the
following analytic descent lemma.

\begin{lemma}[Proper-fibre descent]
\label{lem:analytic-descent}

Let \(f:\widehat X\to X\) be a proper bimeromorphic holomorphic map between
complex manifolds, with \(X\) normal, and let \(T\) be a complex manifold.
If a holomorphic map \(G:\widehat X\times T\to A\), where
\(A\subset\C^N\) is a closed complex subvariety, is constant on the fibres
of \(f\times\operatorname{id}_T\), then it factors uniquely through a
holomorphic map \(g:X\times T\to A\).

\end{lemma}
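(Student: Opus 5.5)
We reduce the statement to the scalar case and apply the Riemann extension theorem for normal spaces. Write \(G=(G_1,\ldots,G_N)\) in the coordinates of \(\C^N\). Since \(A\) is closed in \(\C^N\), it suffices to construct a holomorphic map \(g:X\times T\to\C^N\) with \(G=g\circ(f\times\id_T)\). Its image then lies in \(A\) automatically: \(f\) is surjective, being proper and bimeromorphic with closed image containing a dense open set, so every value of \(g\) is a value of \(G\). Uniqueness also follows from this surjectivity. So we may assume \(N=1\) and treat a single holomorphic function \(G\) that is constant on the fibres of \(F:=f\times\id_T\).

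\textbf{Step 1: definition and continuity.} Because \(G\) is constant on the fibres of \(F\) and \(F\) is surjective, the set-theoretic map \(g(x,t):=G(\hat x,t)\), for any \(\hat x\in f^{-1}(x)\), is well defined. The map \(F\) is proper, since \(f\) is proper, and surjective, so it is closed and hence a topological quotient map. Therefore \(g\) is continuous: for a closed set \(C\), the preimage \(g^{-1}(C)=F(G^{-1}(C))\) is the image of a closed set under a closed map.

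\textbf{Step 2: holomorphy off a thin set.} Since \(f\) is bimeromorphic, there is a nowhere dense closed analytic subset \(Z\subset X\) such that \(f\) restricts to a biholomorphism \(\widehat X\setminus f^{-1}(Z)\to X\setminus Z\). On \((X\setminus Z)\times T\) we have \(g=G\circ(f^{-1}\times\id_T)\), so \(g\) is holomorphic there.

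\textbf{Step 3: extension.} The set \(Z\times T\) is a nowhere dense analytic subset of the normal space \(X\times T\); in the application \(X\) is a manifold, so \(X\times T\) is normal. The function \(g\) is continuous everywhere and holomorphic off \(Z\times T\). By the Riemann extension theorem on normal complex spaces, which extends locally bounded weakly holomorphic functions and in particular continuous ones, \(g\) is holomorphic on \(X\times T\).

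The only point needing care is Step 1: the continuity of \(g\) relies on properness through the closed-map argument. Once continuity is established, normality of \(X\) makes the extension in Step 3 routine.
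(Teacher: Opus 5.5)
Your proof is correct and is essentially the paper's argument: the paper packages your Steps~1--3 into the identity \((f\times\operatorname{id}_T)_*\mathcal O_{\widehat X\times T}=\mathcal O_{X\times T}\), justified by the Riemann extension theorem on the normal product. It then, as you do, uses surjectivity for uniqueness and for placing the image in \(A\). The only difference is that you unpack this identity explicitly: you get continuity of \(g\) from fibre-constancy and the closed-map property of the proper map \(f\times\operatorname{id}_T\), instead of relying on the local boundedness that underlies the pushforward identity.
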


\begin{proof}
The product \(X\times T\) is normal, and
\(f\times\operatorname{id}_T\) is a proper bimeromorphic modification.
Hence
\[
(f\times\operatorname{id}_T)_*
\mathcal O_{\widehat X\times T}
=
\mathcal O_{X\times T};
\]
by the analytic Riemann extension theorem
\cite[Chapter~8, \S1]{GrauertRemmert}. The coordinate functions of \(G\)
therefore descend holomorphically to \(X\times T\). They define a map to
\(\C^N\), whose image lies in \(A\) because the defining equations of \(A\)
vanish after pullback. Uniqueness follows from surjectivity.
\end{proof}

The variety \(Y=S\cap(\PP^{n+1}\setminus H_0)\) is closed in the affine chart
\(\PP^{n+1}\setminus H_0\cong\C^{n+1}\), and is therefore affine. Choose an
algebraic closed embedding \(\varepsilon:Y\hookrightarrow\C^N\). By
\cref{lem:exceptional-constancy}, the map
\(\varepsilon\circ\beta_L\circ\widehat s_L\) is constant on the fibres of
\(\beta_L\times\id_\C\). Hence \cref{lem:analytic-descent} yields the
holomorphic spray
\begin{equation}\label{eq:downstairs-spray}
s_L:Y\times\C\longrightarrow Y,
\qquad
s_L(y,0)=y .
\end{equation}

\subsection{Algebraicity at zero time}

The descended spray is holomorphic. To make its tangent-spanning locus
Zariski open and later extract finitely many sprays, we also need its
zero-time field to be algebraic. This stronger infinitesimal statement holds
because the Hamiltonian direction is algebraic, even though the complete flow
need not be.

Recall from \cref{eq:ham-morphism} the Hamiltonian bundle morphism
\(\mathfrak h_L:\mathcal N_L\to V_L|_{\widetilde Y_L}\), and define
\begin{equation}\label{eq:zero-time-field}
W_L(y)=
\left.\frac{\partial}{\partial t}\right|_{t=0}s_L(y,t).
\end{equation}

On \(Y\setminus L^\circ\), the map
\(\beta_L:\widetilde Y_L\to Y\) is an isomorphism. Hence, for
\(z=\beta_L^{-1}(y)\),
\begin{equation}\label{eq:differentiated-descent}
W_L(y)
=
d(\beta_L)_z
\bigl(\mathfrak h_L(\eta_L(z))\bigr).
\end{equation}
The right-hand side is algebraic, since it is obtained from the algebraic
section \(\eta_L\), the algebraic Hamiltonian morphism
\(\mathfrak h_L\), and the algebraic blowup map \(\beta_L\).

At every relative regular point \(z\notin R_L\) of the residual affine conic,
\(\mathfrak h_L(\eta_L(z))\) is nonzero and spans the tangent line of the
conic.  Therefore \(W_L\) is nonzero and spans the corresponding tangent line
at every regular point of the residual affine conic in
\(Y\setminus L^\circ\).

It remains to extend this algebraic vector field across \(L^\circ\).  Since
\[
Y=S\setminus B
\]
is smooth, its tangent sheaf \(T_Y\) is locally free and hence reflexive.
Moreover,
\[
\operatorname{codim}_Y L^\circ=n-1\geqslant2 .
\]
For the inclusion
\[
j:Y\setminus L^\circ\hookrightarrow Y,
\]
reflexive extension gives
\[
T_Y
\cong
j_*(T_Y|_{Y\setminus L^\circ});
\]
see \cite[Proposition~1.6]{HartshorneReflexive}.  Hence the algebraic vector
field on \(Y\setminus L^\circ\) extends uniquely to an algebraic section of
\(T_Y\) on all of \(Y\). This is the algebraic Hartogs principle for a
reflexive sheaf across a subset of codimension at least two.

The algebraic extension and \(W_L\) agree on the dense open set
\(Y\setminus L^\circ\), hence everywhere. Since the descended spray fixes
\(L^\circ\), the field \(W_L\) vanishes there. Thus proper-fibre descent
constructs the spray, whereas reflexive extension supplies the algebraicity
needed for finite extraction.

\begin{proposition}\label{prop:higher-one-line}
For every line \(L\subset S\) not contained in \(H_0\), there exists a global
holomorphic spray \(s_L:Y\times\C\to Y\) whose zero-time field \(W_L\)
is algebraic and vanishes along \(L^\circ\). At every
\(y\in Y\setminus L^\circ\) where the residual affine conic is regular,
\(W_L(y)\) is nonzero and spans its tangent line.
\end{proposition}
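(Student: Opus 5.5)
The plan is to assemble the proposition from the constructions already carried out in this section, in three steps.

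\emph{Step 1: the spray.} Starting from the common Hamiltonian spray \(\Phi_L:\mathcal N_L\to\widetilde Y_L\) of \cref{prop:common-spray}, form the rescaled map \(\widehat s_L(z,t)=\Phi_L(z,t\eta_L(z))\) using the section \(\eta_L=s_{D_0}s_{E_L}^2\) of \eqref{eq:eta}. This section is well defined because \(\mathcal N_L=\OO_{Z_L}(H+2E_L)|_{\widetilde Y_L}\) by \eqref{eq:time-line-class}, and it vanishes on \(R_L\). Since \(\beta_L\) is the blowup of the smooth centre \(L^\circ\subset Y\), \cref{lem:exceptional-constancy} shows that \(\widehat s_L\) fixes \(R_L\) pointwise and that \(\beta_L\circ\widehat s_L\) is constant on the fibres of \(\beta_L\times\id_\C\). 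Composing with an algebraic closed embedding \(\varepsilon:Y\hookrightarrow\C^N\) and applying \cref{lem:analytic-descent} (\(\beta_L\) is proper and bimeromorphic, and \(Y\) is smooth, hence normal) gives a holomorphic \(s_L:Y\times\C\to Y\). Setting \(t=0\) gives \(\widehat s_L(z,0)=z\), so \(s_L(y,0)=y\) by surjectivity of \(\beta_L\); thus \(s_L\) is a spray on the trivial line bundle.

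\emph{Step 2: the zero-time field.} On \(Y\setminus L^\circ\), where \(\beta_L\) is an isomorphism, differentiate at \(t=0\). The vertical differential of \(\Phi_L\) at the zero section is \(\mathfrak h_L\), which gives formula \eqref{eq:differentiated-descent}. Each ingredient there (\(\eta_L\), \(\mathfrak h_L\), \(\beta_L\)) is algebraic, so \(W_L\) is algebraic on \(Y\setminus L^\circ\). Because \(\operatorname{codim}_Y L^\circ=n-1\geqslant2\) and \(T_Y\) is locally free, reflexive extension extends \(W_L\) to an algebraic vector field on all of \(Y\). It agrees with the holomorphic zero-time field by density. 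On \(L^\circ\), \(s_L(y,t)=y\) for all \(t\), because \(\widehat s_L\) fixes \(R_L=\beta_L^{-1}(L^\circ)\); hence \(W_L\) vanishes along \(L^\circ\).

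\emph{Step 3: nonvanishing off \(L^\circ\).} Take \(y\in Y\setminus L^\circ\) at which the residual affine conic is regular, and let \(z=\beta_L^{-1}(y)\notin R_L\). Then \(s_{E_L}(z)\neq0\), and \(s_{D_0}(z)\neq0\) because \(z\notin D_0\); so \(\eta_L(z)\) generates \(\mathcal N_{L,z}\). By \cref{prop:common-spray}, \(\mathfrak h_L(\eta_L(z))\) spans \(\ker d(p_L|_{\widetilde Y_L})_z\), which is the tangent line of the conic. Since \(d\beta_L\) is an isomorphism at \(z\), \(W_L(y)\) is nonzero and spans that tangent line.

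The main delicate point is Step 1's descent: the hypotheses of the descent lemma require checking that \(\beta_L\times\id\) is proper bimeromorphic onto a normal space. The algebraicity of \(W_L\) across \(L^\circ\) also needs care. For the latter, the first thing to try is to pass through the algebraic field on the complement together with reflexivity, rather than to differentiate the non-algebraic flow directly.
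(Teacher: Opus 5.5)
Your proposal is correct and follows the paper's own argument essentially step for step. The shared steps are: rescaling by \(\eta_L=s_{D_0}s_{E_L}^{2}\); exceptional-fibre constancy plus proper-fibre descent through an algebraic embedding of \(Y\); the formula for \(W_L\) via \(\mathfrak h_L\) off \(L^\circ\); reflexive extension across the codimension-\((n-1)\) locus \(L^\circ\); and nonvanishing at relative regular points, since \(\eta_L(z)\) generates \(\mathcal N_{L,z}\) off \(R_L\).
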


\subsection{Tangent transfer and pointwise spanning}

It remains to prove pointwise domination. The key geometric input is the
tangent-transfer mechanism of Kaliman--Zaidenberg: tangent directions of
lines through a general point can be transported to tangent directions of
residual conics through an arbitrary point.

We use \cite[Lemma~3.2]{KalimanZaidenberg} in the following form. There exists
a nonempty Zariski open subset \(U\subset S\) such that, for every \(x\in U\), one can
choose lines
\[
L_1,\ldots,L_n\subset S
\]
through \(x\) satisfying
\begin{equation}\label{eq:kz-spanning-at-x}
T_xS=
\operatorname{Span}(T_xL_1,\ldots,T_xL_n).
\end{equation}

The transfer is provided by the third-point involution used by
Kaliman--Zaidenberg
\cite[Corollary~3.6(c), Proposition~3.7]{KalimanZaidenberg}. In introducing
this birational map, they refer to \cite[Example~2.4]{BogomolovBohning} and
\cite{BogomolovKarzhemanovKuyumzhiyan}. For fixed
\(\rho\in S\), this rational involution \(\iota_\rho\) sends a general point \(p\in S\)
to the residual third point of the line \(\langle \rho,p\rangle\cap S\). Suppose
that the secant \(\ell=\langle x,y\rangle\) meets \(S\) in the reduced divisor
\(x+\rho+y\). If
\[
\Pi_i=\langle L_i,y\rangle,\qquad
\Pi_i\cap S=L_i+\Gamma_i,
\]
then \(\iota_\rho\) maps the germ of \(L_i\) at \(x\) to the germ of
\(\Gamma_i\) at \(y\): for \(p\in L_i\) near \(x\), the third point of
\(S\cap\langle \rho,p\rangle\) is \(\iota_\rho(p)\in\Gamma_i\). Therefore
\begin{equation}\label{eq:kz-tangent-transfer}
T_y\Gamma_i=d\iota_\rho(T_xL_i)
\end{equation}
holds.

\begin{lemma}\label{lem:pointwise-lines}
For every \(y\in Y\), there exist lines
\[
L_1,\ldots,L_n\subset S,
\]
none contained in \(H_0\) and none passing through \(y\), such that the
residual conics through \(y\) are regular at \(y\) and
\[
T_yY=
\operatorname{Span}
(W_{L_1}(y),\ldots,W_{L_n}(y)).
\]
\end{lemma}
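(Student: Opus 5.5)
Given \(y\in Y\), the plan is to transport a spanning configuration of lines from a general point \(x\) to \(y\) by the third-point involution. We choose \(x\) and \(\rho\) so that four things hold: \(x\in U\), the point \(x\) lies off the hyperplane \(H_0\), the secant \(\ell=\langle x,y\rangle\) meets \(S\) in a reduced divisor \(x+\rho+y\) with three distinct points, and \(\iota_\rho\) is a local biholomorphism at \(x\) sending \(x\) to \(y\). Each of these is an open condition on \(x\). The first three are clearly nonempty, and the fourth should follow from the birationality of \(\iota_\rho\) together with the genericity statements in \cite[Corollary~3.6(c), Proposition~3.7]{KalimanZaidenberg}. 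So \(x\) can be taken in a nonempty Zariski open subset of \(S\), and we fix such an \(x\) together with the corresponding \(\rho\in\ell\cap S\).

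We then take lines \(L_1,\dots,L_n\) through \(x\) satisfying \eqref{eq:kz-spanning-at-x}. None of them lies in \(H_0\), because \(x\notin H_0\).

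To see that no \(L_i\) passes through \(y\), we argue as follows. If \(y\in L_i\), then \(L_i=\ell\) would be contained in \(S\), which contradicts the reducedness of \(\ell\cap S=x+\rho+y\) as a finite divisor. Next, set \(\Pi_i=\langle L_i,y\rangle\), which is a plane, and write \(\Pi_i\cap S=L_i+\Gamma_i\). Then \(\Gamma_i\) is exactly the residual conic through \(y\) in the fibre of \(p_{L_i}\) containing \(y\). By \eqref{eq:kz-tangent-transfer} we have \(T_y\Gamma_i=d\iota_\rho(T_xL_i)\), which is a line because \(d\iota_\rho\) is invertible at \(x\). This will show that \(\Gamma_i\) is smooth at \(y\), provided the germ of \(\Gamma_i\) at \(y\) is the smooth image of the germ of \(L_i\) under the local biholomorphism \(\iota_\rho\). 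That holds because \(\iota_\rho\) maps the germ of \(L_i\) at \(x\) onto a germ of \(\Gamma_i\) at \(y\).

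To rule out a second branch of \(\Gamma_i\) through \(y\), such as a node, we would add to the open conditions on \(x\) that \(\Gamma_i\) meets \(\ell\) only transversally at \(y\). If this proves hard to arrange, an alternative is to note that \(\Pi_i\cap\ell\) is the whole of \(\ell\), since \(\ell\subset\Pi_i\). Then \(\Pi_i\cap S\supset\ell\cap S=\{x,\rho,y\}\), and \(\Gamma_i\cap\ell\) is a scheme of length \(2\) containing \(\rho,y\), hence reduced at \(y\). A nodal conic at \(y\) would force intersection multiplicity at least \(2\) at \(y\), which is a contradiction. So \(\Gamma_i\) is regular at \(y\), meaning \(y\) is a relative regular point for \(p_{L_i}\).

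Finally, \cref{prop:higher-one-line} applies at \(y\notin L_i^\circ\). It shows that \(W_{L_i}(y)\) is nonzero and spans \(T_y\Gamma_i=d\iota_\rho(T_xL_i)\). Since \(d\iota_\rho\) is an isomorphism \(T_xS\to T_yS=T_yY\), the spanning \eqref{eq:kz-spanning-at-x} at \(x\) transfers to \(T_yY=\Span(W_{L_1}(y),\dots,W_{L_n}(y))\).

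The main obstacle is the genericity step: we must verify that for an arbitrary \(y\) (not a general one) there is some \(x\in U\setminus H_0\) for which the secant is reduced and \(\iota_\rho\) is regular and étale at \(x\). I would first try the dimension count suggested by Kaliman--Zaidenberg's proof. The set of \(x\) for which the line \(\langle x,y\rangle\) is tangent to \(S\) or lies in \(S\) is a proper closed subset, because \(S\) is not a cone over \(y\). On the remaining open set, \(\iota_\rho\) is locally the residual-point map, whose differential is invertible whenever the three points are distinct.
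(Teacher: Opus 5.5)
Your proposal is correct and follows essentially the same route as the paper. You choose $x\in U\cap Y$ off the tangency loci, namely $S\cap\mathbb{T}_yS$ and the polar section, which are proper because $S$ is nondegenerate and not a cone with vertex $y$. This makes $\ell\cdot S=x+\rho+y$ reduced, and you then transfer the spanning lines at $x$ to the residual conics at $y$ via $d\iota_\rho$.

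The only real difference is how you prove regularity of $\Gamma_i$ at $y$. You use the count $\ell\cdot\Gamma_i=\rho+y$ inside $\Pi_i$; this rules out a double line as well as a node, so your proposed extra open condition is unnecessary. The paper instead computes $d(F|_{\Pi_i})_y=\ell_i(y)\,dq_i(y)$, which would force $\Pi_i\subset\mathbb{T}_yS$ if $dq_i(y)=0$. Both arguments rest on the same transversality of $\ell$ at $y$.
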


\begin{proof}
Fix \(y=[a_0:\cdots:a_{n+1}]\in Y\), and let \(F=0\) be an equation
of \(S\). Write \(\mathbb T_yS\) for the projective tangent hyperplane to
\(S\) at \(y\). For \(x\ne y\), the line \(\langle x,y\rangle\) is tangent
to \(S\) at \(y\) precisely when \(x\in\mathbb T_yS\). It is tangent at
\(x\) precisely when \(\sum_{j=0}^{n+1}a_jF_{x_j}(x)=0\). Thus the two bad
loci are \(S\cap\mathbb T_yS\) and
\[
P_y:=
S\cap
\left\{
\sum_{j=0}^{n+1}a_j
\frac{\partial F}{\partial x_j}=0
\right\}.
\]
Both loci are proper. The smooth cubic \(S\) cannot be contained in its tangent
hyperplane \(\mathbb T_yS\), so the first is proper. If \(P_y=S\), then the
quadratic polar \(\sum a_jF_{x_j}\) would vanish on \(S\). Since the homogeneous
ideal of the irreducible cubic hypersurface \(S\) is generated by its cubic
equation \(F\), no nonzero quadratic can vanish on \(S\); the polar would
therefore vanish identically on \(\PP^{n+1}\). After a linear change of
coordinates this would give \(F_{x_0}=0\), making \(S\) a cone and therefore
singular.

The sets \(U\) and \(Y\) are nonempty Zariski open subsets of the irreducible
variety \(S\), while the loci just excluded are proper closed subsets.
Therefore we may choose
\[
x\in U\cap Y\setminus
\bigl((S\cap\mathbb T_yS)\cup P_y\cup\{y\}\bigr).
\]
Then \(\ell=\langle x,y\rangle\) is tangent to \(S\) at neither endpoint.
Moreover, \(\ell\not\subset S\): otherwise its tangent direction at \(y\)
would lie in \(T_yS\), so \(x\in\mathbb T_yS\), contrary to the choice of
\(x\). B\'ezout's theorem gives total intersection multiplicity three.
Nontangency at \(x\) and \(y\) makes both endpoint multiplicities equal to one,
so the intersection is reduced, the residual point \(\rho\) is distinct from them,
and
\[
S\cdot\ell=x+\rho+y.
\]
Let \(L_1,\ldots,L_n\) be the lines through \(x\)
supplied by \eqref{eq:kz-spanning-at-x}. None is contained in \(H_0\), since
\(x\notin H_0\), and none contains \(y\), since otherwise
\(\ell=\langle x,y\rangle=L_i\subset S\).

For each \(i\), set \(\Pi_i=\langle L_i,y\rangle\) and write
\[
\Pi_i\cap S=L_i+\Gamma_i,
\]
where \(\Gamma_i\) is the residual conic through \(y\). Since
\(\ell\subset\Pi_i\) but \(\ell\not\subset S\), the plane \(\Pi_i\) cannot
be contained in \(S\). The reduced secant \(x+\rho+y\)
places \(x\) and \(y\) in the local biregularity locus of the third-point
involution \(\iota_\rho\).  Hence \(\iota_\rho\) maps the germ of \(L_i\) at \(x\)
biholomorphically to the germ of \(\Gamma_i\) at \(y\), and
\[
T_y\Gamma_i=d\iota_\rho(T_xL_i).
\]

To check regularity at \(y\), write
\(F|_{\Pi_i}=\ell_iq_i\), with \(L_i=\{\ell_i=0\}\) and
\(\Gamma_i=\{q_i=0\}\). Since \(\ell_i(y)\ne0\),
\[
d(F|_{\Pi_i})_y=\ell_i(y)dq_i(y).
\]
Thus \(dq_i(y)=0\) would give \(d(F|_{\Pi_i})_y=0\), so
\(T_y\Pi_i\subset T_yS\), equivalently
\(\Pi_i\subset\mathbb T_yS\). This contradicts the transversality at \(y\) of
\(\ell\subset\Pi_i\). Hence \(\Gamma_i\) is regular at \(y\).

By \cref{prop:higher-one-line}, the vector field \(W_{L_i}(y)\) spans
\(T_y\Gamma_i\).  Hence
\[
\begin{aligned}
\operatorname{Span}
(W_{L_1}(y),\ldots,W_{L_n}(y))
&=
d\iota_\rho
\left(
\operatorname{Span}
(T_xL_1,\ldots,T_xL_n)
\right)
\\
&=
T_yS
=
T_yY .
\end{aligned}
\]
This proves the lemma.
\end{proof}

\subsection{Finite extraction}

For an ordered \(n\)-tuple
\(\mathbf L=(L_1,\ldots,L_n)\), let
\[
\Omega_{\mathbf L}
=
\left\{
y\in Y:
W_{L_1}(y)\wedge\cdots\wedge W_{L_n}(y)\neq0
\right\}.
\]
The wedge in the definition is an algebraic section of \(\det T_Y\), so every
\(\Omega_{\mathbf L}\) is Zariski open. These sets cover \(Y\) by
\cref{lem:pointwise-lines}. A Noetherian space is quasi-compact; hence this
open cover has a finite subcover. Hence there are an integer \(N_0\geqslant1\)
and tuples \(\mathbf L^{(1)},\ldots,\mathbf L^{(N_0)}\) such that
\[
Y=\Omega_{\mathbf L^{(1)}}\cup\cdots\cup\Omega_{\mathbf L^{(N_0)}}.
\]
The sprays associated with all lines occurring in these tuples form a finite
dominating family.

\begin{proof}[Proof of \cref{thm:higher-source}]
By the composition lemma \cite[Lemma~2.1]{ForstnericSub}, this family combines
into a single dominating spray. Hence \(Y\) is holomorphically elliptic.
\end{proof}

\section{Affine quadratic complements}\label{sec:quadratics}

We next treat reducible cubics. A linear factor places their complements in an
affine chart, where the remaining equation has degree at most two. Accordingly,
let \(q\) be a nonzero polynomial in \(z_1,\ldots,z_n\) of degree at most two and put
\(A_q=\C^n\setminus\{q=0\}\).

For smooth affine plane conics, Hanysz proved the Oka property by
constructing sprays on a logarithmic covering space
\cite[Appendix~B]{HanyszCubicMaps}. Kusakabe subsequently proved that every
projective quadric complement is Oka by Zariski localization
\cite[Corollary~4.9(1)]{KusakabeLocalization}; combined with local normal
forms, the same localization principle also applies to higher-dimensional
affine quadratics. For the classification below, however, we need a uniform
statement that also covers degenerate and nonreduced quadratic equations.
The following theorem treats all nonzero affine polynomials of degree at most
two, proves holomorphic ellipticity in every nonexceptional case, and
identifies the unique non-Oka normal form.

\begin{theorem}\label{thm:affine-quadratic}
Let \(n\geqslant1\).  The manifold \(A_q\) is elliptic, hence Oka,
unless an affine change of coordinates and multiplication of \(q\) by
a nonzero constant transform it into
\[
                            z_1(z_1-1).
\]
In the exceptional case
\[
               A_q\cong(\C\setminus\{0,1\})\times\C^{n-1}
\]
and \(A_q\) is not Oka.
\end{theorem}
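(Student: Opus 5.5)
We reduce \(q\) to an affine normal form and treat each form separately. Completing squares and making affine changes of coordinates, every nonzero \(q\) of degree at most two becomes, up to a nonzero constant factor, one of the following:
\[
\text{(a) } 1,\qquad \text{(b) } z_1,\qquad \text{(c) } z_1^2,\qquad \text{(d) } z_1(z_1-1),\qquad \text{(e) } z_1^2+\cdots+z_r^2+c,\ r\geqslant2,\ c\in\{0,1\},\qquad \text{(f) } z_1^2+\cdots+z_r^2+z_{r+1},\ r\geqslant1.
\]

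\emph{Elementary cases.} In cases (a)--(c) the complement is \(\C^n\) or \(\C^*\times\C^{n-1}\). Both are complex Lie groups, hence elliptic. Case (d) is the exceptional form, and there \(A_q\cong(\C\setminus\{0,1\})\times\C^{n-1}\) directly. This space is not Oka: it retracts holomorphically onto the hyperbolic curve \(\C\setminus\{0,1\}\), and a retract of an Oka manifold is Oka, whereas a hyperbolic curve admits no nonconstant entire curve.

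\emph{Graph cases (f).} For \(r\geqslant1\) we can write
\[
A_q=(\C^{n-1}_{z'}\times\C_{z_{r+1}})\setminus\Gamma_m,
\]
where \(m=-(z_1^2+\cdots+z_r^2)\) is a polynomial, i.e.\ \(m=f+1/g\) with \(g\) never zero after adjusting; more simply, the shear \(z_{r+1}\mapsto z_{r+1}+\sum z_i^2\) gives \(A_q\cong\C^{n-1}\times\C^*\). This is elliptic.

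\emph{Cone case (e) with \(c=0\).} Here \(A_q\) is the complement of an affine quadric cone. Its image in \(\C^r\setminus\{Q=0\}\) is a \(\C^*\)-bundle over the projective complement \(\PP^{r-1}\setminus\{Q=0\}\), the latter viewed via a line-bundle structure. By Kusakabe's result quoted in \cref{sec:sprays}, projective quadric complements are Oka. The Oka principle for fibre bundles with Oka fibre \(\C^*\) then gives the Oka property. Because \(A_q\) is affine, hence Stein, it is elliptic. For \(r=2\) this case is just \(\C^*\times\C^*\times\C^{n-2}\).

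\emph{Smooth affine quadric case (e) with \(c=1\).} This is the main obstacle. The set \(\{z_1^2+\cdots+z_r^2=-1\}\) is a smooth affine quadric, and the complement is not a product in an obvious way. For \(r=2\), write \(z_1^2+z_2^2+1=uv+1\) with \(u=z_1+iz_2\), \(v=z_1-iz_2\). Then \(\{uv\ne-1\}\) is the complement of the graph of \(m=-1/u\) over \(\C_u\), with \(f=0\) and \(g=-u\). Hanysz's \cref{thm:hanysz-graph} shows it is Oka because \(\C\) is Oka; the extra \(\C^{n-2}\) factor is harmless.

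For \(r\geqslant3\), split \(z_1^2+z_2^2=uv\) and set \(w=1+\sum_{i\geqslant3}z_i^2\). We would try to apply \cref{thm:kusakabe-localization}. On the Zariski open set \(\{u\ne0\}\), the map \(v\mapsto uv+w\) is an automorphism in the \(v\) variable, giving
\[
\{u\neq0\}\cap A_q\cong\C^*\times\C^*\times\C^{n-2},
\]
which is Oka. The same holds on \(\{v\ne0\}\), and by symmetry for any choice of isotropic pair. Every point of \(A_q\) has some isotropic linear form nonvanishing at it, since the isotropic forms span the dual space and cannot all vanish at a nonzero point. The origin also lies in \(A_q\), but it is covered by a form chosen after a translation-free orthogonal change of coordinates; alternatively, we use the linear form \(u'\), taking any isotropic form with \(u'(0)\) handled by the fact that the charts only need the form nonzero at the point. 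If the origin resists this, we instead cover it using the set \(\{w\neq0\}\) and the \(r=2\) Hanysz argument fibrewise.

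Localization then yields the Oka property, and Steinness upgrades it to ellipticity. Checking that these charts genuinely cover all of \(A_q\), including the origin, is the step needing the most care.
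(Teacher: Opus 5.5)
Your normal forms are right, and cases (a)--(d), (f) (via the shear), the cone case $c=0$ and the smooth conic case $r=2$ are all correct. The gap is in case (e) with $c=1$ and $r\geqslant3$, at exactly the spot you flagged, and your suggested repairs do not close it.

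\textbf{Where the covering fails.} The centre locus $\{z_1=\cdots=z_r=0\}$ lies in $A_q$, since $q=1$ there. Every linear form in $z_1,\dots,z_r$ vanishes on this locus, so no chart $A_q\cap\{\lambda\neq0\}$ with $\lambda$ isotropic contains it. An orthogonal change of coordinates fixing the origin changes nothing, and ``the form only needs to be nonzero at the point'' is precisely what fails. (Projectively, for $n=r$: these charts are complements of tangent hyperplanes to $\overline{\{q=0\}}$ at its points at infinity. All such hyperplanes pass through the pole of the hyperplane at infinity, which is the centre.)

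\textbf{Why the fallback chart does not work.} On $\{w\neq0\}$ you have $A_q\cap\{w\neq0\}=(X\times\C_v)\setminus\Gamma_m$. Here $m=1/g$ with $g=-u/w$, and $X=\C_u\times\{1+z_3^2+\cdots+z_r^2\neq0\}\times\C^{n-r}$. Hanysz's theorem is an equivalence, so this chart is Oka if and only if $X$ is. For $r=4$, and inductively for every even $r$, this works, since $X$ then involves the $r=2$ case. For $r=3$, however, $X=\C\times(\C\setminus\{\pm i\})\times\C^{n-3}$ contains the exceptional factor. So the chart $A_q\cap\{w\neq0\}$ is \emph{not} Oka. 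Every odd $r$ reduces to $r=3$, so your argument proves nothing for $z_1^2+\cdots+z_r^2+1$ with $r$ odd, starting with $z_1^2+z_2^2+z_3^2+1$ on $\C^3$.

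\textbf{The missing idea: overshears.} The paper needs no charts at all. Let $v$ be $q_2$-isotropic and put $b_v=D_vq$. Then $q(z+\sigma v)=q(z)+\sigma b_v(z)$, and $b_v$ is constant along lines parallel to $v$. Hence $qD_v$ is complete, with flow
$\Phi_v^t(z)=z+t\chi(tb_v(z))q(z)v$, where $\chi(w)=(e^w-1)/w$. Moreover $q\circ\Phi_v^t=e^{tb_v}q$, so the flow preserves $A_q$. When $\rank q_2\geqslant2$, the isotropic vectors span $\C^n$. Composing the flows for an isotropic basis $v_1,\dots,v_n$ gives a spray whose vertical differential at $z$ has columns $q(z)v_1,\dots,q(z)v_n$. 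These span because $q(z)\neq0$, at the centre exactly as everywhere else. This handles all of (e), and (f) with $r\geqslant2$, uniformly. It also gives ellipticity directly, without Kusakabe's localization or the Stein converse.
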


\begin{proof}
Write
\[
                         q=q_2+\ell+a,
\]
where \(q_2\) is homogeneous quadratic, \(\ell\) is linear, and
\(a\in\C\).  By \(\rank q_2\) we mean the rank of the symmetric
bilinear form associated to \(q_2\).

Set \(k_q:=\rank q_2\), and assume first that \(k_q\geqslant2\). After a linear change of
coordinates,
\[
                         q_2=z_1^2+\cdots+z_{k_q}^2.
\]
Let \(e_1,\ldots,e_n\) be the standard coordinate vectors.  The vectors
\begin{align*}
 v_1&=e_1+ie_2, & v_2&=e_1-ie_2,\\
 v_j&=e_1+ie_j &&(3\leqslant j\leqslant k_q),\\
 v_j&=e_j       &&(k_q<j\leqslant n)
\end{align*}
form a basis of \(\C^n\), and every one is \(q_2\)-isotropic:
\(q_2(v_j)=0\).

We now apply the standard overshear construction. In Varolin's
coordinate-free terminology, if \(V\) is a \(\C\)-complete holomorphic vector
field and \(f\in\ker V^2\), then \(fV\) is a complete overshear vector field;
see \cite[\S3]{VarolinShears}. The explicit formula for its flow was given by
Andrist and Kutzschebauch
\cite[Lemma~3.3]{AndristKutzschebauchFibred}. We spell out the specialization
to \(V=D_v\) and \(f=q\), since it shows directly that the resulting flow
preserves \(A_q\). The point of the present argument is that the
\(q_2\)-isotropic vectors chosen above form a basis of \(\C^n\); the
corresponding overshears can therefore be combined into a dominating spray on
\(A_q\).

Fix a \(q_2\)-isotropic vector \(v\), let \(D_v\) denote differentiation in
the constant direction \(v\), and set \(b_v=D_vq\). Since \(q\) has degree at
most two, Taylor's formula gives
\begin{equation}\label{eq:q-affine-line}
 q(z+\sigma v)
   =q(z)+\sigma D_vq(z)+\sigma^2q_2(v)
   =q(z)+\sigma b_v(z).
\end{equation}
Thus \(q\) varies affinely along every line parallel to \(v\). Applying the
same argument to the affine-linear function \(b_v\), we obtain
\[
 b_v(z+\sigma v)
   =b_v(z)+\sigma D_v^2q(z)
   =b_v(z),
\]
because \(D_v^2q=2q_2(v)=0\). Hence \(b_v\) is constant along each such
line.

Consider the holomorphic vector field \(V_v=qD_v\). Its integral curve
through \(z\) has the form \(z+\sigma(t)v\), where
\cref{eq:q-affine-line} reduces the flow equation to
\[
 \sigma'(t)=q(z)+b_v(z)\sigma(t),\qquad \sigma(0)=0.
\]
Writing
\[
 \chi(w)=
 \begin{cases}
  (e^w-1)/w,&w\ne0,\\
  1,&w=0,
 \end{cases}
\]
the solution is \(\sigma(t)=t\chi\bigl(tb_v(z)\bigr)q(z)\). The function
\(\chi\) is entire; its value at the origin incorporates the case
\(b_v(z)=0\). Consequently, \(V_v\) is complete and its flow is
\begin{equation}\label{eq:quadratic-flow}
 \Phi_v^t(z)
   =z+t\chi\bigl(t b_v(z)\bigr)q(z)v,
 \qquad (z,t)\in\C^n\times\C.
\end{equation}

Substituting \cref{eq:quadratic-flow} into
\cref{eq:q-affine-line}, we obtain
\begin{equation}\label{eq:q-transformation}
 \begin{aligned}
 q\bigl(\Phi_v^t(z)\bigr)
   &=q(z)+t\chi\bigl(tb_v(z)\bigr)q(z)b_v(z)\\
   &=\Bigl[1+tb_v(z)\chi\bigl(tb_v(z)\bigr)\Bigr]q(z)\\
   &=e^{tb_v(z)}q(z),
 \end{aligned}
\end{equation}
Here the last equality follows from \(1+w\chi(w)=e^w\). Since the
exponential factor never vanishes, \cref{eq:q-transformation} shows that
the flow preserves \(A_q\).

Compose the flows belonging to the basis above:
\[
 s(z;t_1,\ldots,t_n)
   =\Phi_{v_n}^{t_n}\circ\cdots\circ\Phi_{v_1}^{t_1}(z).
\]
Each flow preserves \(A_q\), and \(\Phi_v^0=\id\); hence their composition is
a holomorphic spray \(A_q\times\C^n\to A_q\) fixing the zero section. Its differential
in the parameter variables at the zero section has columns
\(q(z)v_1,\ldots,q(z)v_n\).  Since \(q(z)\ne0\), these columns form a
basis of \(T_zA_q\).  Thus \(s\) is dominating and \(A_q\) is elliptic.

It remains to consider \(k_q\leqslant1\); repeated factors are included
in this analysis. Rank zero is the affine-linear case. In rank one, completing
the square will reduce the complement either to a product with \(\C^*\) or to
the exceptional one-variable polynomial. If \(q_2=0\), then
\(q\) is constant or affine linear.  Its complement is respectively
\(\C^n\) or \(\C^*\times\C^{n-1}\).  These manifolds are elliptic:
the additive and multiplicative complex groups have global dominating
sprays, and ellipticity is preserved by products.

Suppose \(k_q=1\).  Completing the square and making a linear
change in the remaining variables gives
\[
             q=z_1^2+b_2z_2+\cdots+b_nz_n+a.
\]
If some \(b_j\ne0\), we may assume after rescaling that \(b_2=1\).
The triangular polynomial automorphism
\[
 (z_1,z_2,z_3,\ldots,z_n)
       \longmapsto (z_1,q,z_3,\ldots,z_n)
\]
has inverse obtained by solving
\(q=z_1^2+z_2+\sum_{j=3}^n b_jz_j+a\) for \(z_2\); hence it identifies
\(A_q\) with \(\C^{n-1}\times\C^*\).
If every \(b_j=0\), then
\(q=z_1^2+a\).  For \(a=0\), the complement is again
\(\C^*\times\C^{n-1}\).  For \(a\ne0\), the polynomial has two
distinct roots, and an affine rescaling gives \(q=z_1(z_1-1)\).

Finally, \(\C\setminus\{0,1\}\) is not Oka by the classical
classification of Oka Riemann surfaces; see
\cite[Section~2]{Hanysz}.  It is a holomorphic retract of
\((\C\setminus\{0,1\})\times\C^{n-1}\), so the latter cannot be Oka.
\end{proof}

\begin{remark}
The proof shows that every nonexceptional \(A_q\) is holomorphically flexible
in the sense of \cite[Definition~6.4]{ArzhantsevFlexible}: at each point, the
values of \(\C\)-complete holomorphic vector fields span its tangent space.
For \(k_q\geqslant2\), the fields
\(qD_{v_1},\ldots,qD_{v_n}\) do so; the remaining nonexceptional cases are
\(\C^n\) and \(\C^*\times\C^{n-1}\). This is holomorphic rather than
algebraic flexibility: the overshear fields have algebraic coefficients, but
their flows are generally nonalgebraic and the fields need not be locally
nilpotent.
\end{remark}

\begin{corollary}\label{cor:reducible}
Let \(n\geqslant2\), and let \(D\subset\PP^n\) be a cubic hypersurface
whose defining polynomial is reducible; repeated factors are allowed.
Then \(\PP^n\setminus D\) is holomorphically elliptic unless
\(D_{\mathrm{red}}\) consists of three distinct hyperplanes containing a
common linear subspace of codimension two. In the exceptional case it is not
Oka.
\end{corollary}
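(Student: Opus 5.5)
Since a reducible cubic form has a linear factor, I will reduce everything to Theorem~\ref{thm:affine-quadratic}. Write the defining polynomial as \(F=\ell\cdot G\), with \(\ell\) a linear form and \(G\) a quadratic form, possibly itself reducible or divisible by \(\ell\). After a projective change of coordinates take \(\ell=Z_0\). The complement \(\PP^n\setminus D\) then lies in the affine chart \(\{Z_0\neq0\}\cong\C^n\), with affine coordinates \(z_j=Z_j/Z_0\). There it equals \(A_q=\C^n\setminus\{q=0\}\), where \(q(z)=G(1,z_1,\ldots,z_n)\) is a polynomial of degree at most two. The polynomial \(q\) is nonzero: dehomogenization is injective on forms of fixed degree, and \(G\neq0\) because \(F\neq0\). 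Here \(q\) may be constant, which happens exactly when \(G=cZ_0^2\). Repeated factors need no separate treatment, because Theorem~\ref{thm:affine-quadratic} allows arbitrary nonzero \(q\) of degree at most two.

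Theorem~\ref{thm:affine-quadratic} now gives that \(\PP^n\setminus D\cong A_q\) is elliptic, unless \(q\) is affinely equivalent, up to a scalar, to \(z_1(z_1-1)\). In that case \(A_q\cong(\C\setminus\{0,1\})\times\C^{n-1}\), which is not Oka. It remains to check that this exceptional alternative matches the geometric condition on \(D_{\mathrm{red}}\).

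Suppose first that \(q=c\,m_1m_2\), where \(m_1,m_2\) are nonconstant affine functions defining distinct parallel hyperplanes: this is what \(z_1(z_1-1)\) means after an affine coordinate change. Homogenize to get linear forms \(M_1,M_2\). The parallelism means \(M_1-M_2=aZ_0\) with \(a\neq0\), while \(M_1,M_2\) are not multiples of \(Z_0\) and are not proportional to each other. Then \(Z_0,M_1,M_2\) are three distinct hyperplanes whose span has dimension two. Hence they share the codimension-two subspace \(\{M_1=M_2=0\}=\{Z_0=M_1=0\}\). Moreover, \(G=cM_1M_2\), so \(D_{\mathrm{red}}=\{Z_0M_1M_2=0\}\). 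Conversely, suppose \(D_{\mathrm{red}}\) is three distinct hyperplanes through a common codimension-two subspace. One of them is \(Z_0\), because \(\{Z_0=0\}\subset D\). The other two have forms \(M_1,M_2\) that lie in a two-dimensional span together with \(Z_0\). Therefore \(q\), up to the factor \(m_1^{a}m_2^{b}\) whose zero set is all that matters, vanishes exactly on two distinct parallel affine hyperplanes. Then \(A_q\) coincides with \(A_{m_1m_2}\), which is the exceptional normal form.

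The main point needing care is the support issue in the converse. With multiplicities, \(q\) could be, for instance, \(m_1^2\) with \(D=\{Z_0M_1^2=0\}\). That configuration has only two hyperplanes, hence is not exceptional, and indeed \(q\) then has rank-one quadratic part with a double root, so the theorem gives \(\C^*\times\C^{n-1}\). The exceptional case needs three distinct linear factors, and these force \(\deg G=2\) with \(G=cM_1M_2\) squarefree and coprime to \(Z_0\). Other degenerations must also be matched against the non-exceptional cases of the theorem. If \(G\) is divisible by \(Z_0\), then \(q\) is affine-linear or constant. If \(M_1,M_2\) meet \(Z_0\) in different codimension-two subspaces, then \(q=m_1m_2\) with \(m_1,m_2\) non-parallel, so \(k_q=2\) or the quadratic part has a linear term, and we are in the elliptic range. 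I expect this bookkeeping, identifying the affine normal form \(z_1(z_1-1)\) exactly with the pencil configuration, to be the only real step.
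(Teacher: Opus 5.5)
Your proposal is correct and follows essentially the same route as the paper: factor out a linear form, identify \(\PP^n\setminus D\) with \(A_q\) in the affine chart \(\{Z_0\neq0\}\), apply \cref{thm:affine-quadratic}, and match the normal form \(z_1(z_1-1)\) with three distinct hyperplanes through a common codimension-two subspace. Your extra bookkeeping on multiplicities (the factor \(m_1^{a}m_2^{b}\)) is not needed, because a cubic with three distinct linear factors is already squarefree, so \(q=c\,m_1m_2\) exactly.
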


\begin{proof}
The cubic equation has a linear factor, say \(F=LQ\), where \(L\) is
linear and \(Q\) has degree two; repetitions are allowed.  The
complement lies in the affine chart \(\{L\ne0\}\cong\C^n\).  Setting
\(L=1\) turns \(Q\) into a nonzero polynomial \(q\) of degree at most
two and gives
\[
                   \PP^n\setminus D\cong A_q.
\]
Apply \cref{thm:affine-quadratic}.

To translate the exceptional normal form projectively, choose coordinates
with \(L=Z_0\). Homogenizing \(q=z_1(z_1-1)\) gives
\[
F=Z_0Z_1(Z_1-Z_0).
\]
Thus \(D_{\mathrm{red}}\) consists of three distinct hyperplanes. Their common
intersection is given by \(Z_0=Z_1=0\), a linear space isomorphic to
\(\PP^{n-2}\). Conversely, after choosing one of any three
such hyperplanes as infinity, the other two become parallel affine
hyperplanes, whose product is affinely equivalent to \(z_1(z_1-1)\).
This agrees with the exceptional case of Hanysz's
hyperplane-arrangement theorem \cite[Theorem~3.1]{Hanysz}.
\end{proof}

\section{Projection from a singular point}
\label{sec:singular-projection}

For integral singular cubics, projection from a singular point lowers the
dimension and yields a recursive reduction. The complement is covered by two
Zariski-open pieces: a
\(\C^*\)-bundle over a quadric complement and a graph complement on a
cubic-root cover. Kusakabe's localization theorem then promotes these local
Oka descriptions to the whole complement.

\begin{lemma}[Singular-point reduction]\label{lem:singular-reduction}
Let \(n\geqslant2\), and let
\[
 F(x_0,x)=x_0q(x)+c(x)
\]
be a nonzero homogeneous cubic on \(\C^{n+1}\), where
\(x=(x_1,\ldots,x_n)\), \(q\) is homogeneous quadratic, and
\(0\ne c\) is homogeneous cubic.  If
\[
                    \PP^{n-1}\setminus\{c=0\}
\]
is Oka, then \(\PP^n\setminus\{F=0\}\) is Oka.
\end{lemma}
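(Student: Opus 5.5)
The plan is to project from the point \(p=[1:0:\cdots:0]\) and then apply Kusakabe's localization theorem (\cref{thm:kusakabe-localization}). Since \(F\) has no \(x_0^2\) or \(x_0^3\) term, \(F(p)=0\). Hence \(X:=\PP^n\setminus\{F=0\}\) does not contain \(p\), and projection \(\pi:[x_0:x]\mapsto[x]\) is a morphism \(X\to\PP^{n-1}\). A point with \(q(x)=c(x)=0\) satisfies \(F=0\). Therefore \(X=X_q\cup X_c\), where
\[
X_q=X\cap\{q\neq0\},\qquad X_c=X\cap\{c\neq0\}.
\]
Both pieces are Zariski open in \(X\), since their complements are the hypersurface sections \(\{q=0\}\) and \(\{c=0\}\). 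So it suffices to show that \(X_q\) and \(X_c\) are Oka. If \(q\equiv0\), only \(X_c=X\) is needed. In that case \(X\) is the total space of the line bundle \(\OO(1)\) (coordinate \(x_0\)) over \(\PP^{n-1}\setminus\{c=0\}\), which is Oka by hypothesis. The fibre bundle Oka principle then gives the claim.

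\emph{The piece \(X_q\).} Over \(\PP^{n-1}\setminus\{q=0\}\), the set \(\pi^{-1}([x])\cup\{p\}\) is a line on which \(x_0\) is an affine coordinate. The fibre of \(X_q\) is this affine line minus the single point \(x_0=-c(x)/q(x)\). The assignment \([x]\mapsto -c/q\) is a holomorphic section of the affine bundle \(\pi^{-1}(\PP^{n-1}\setminus\{q=0\})\), which is a torsor under \(\OO(1)\). Translating by this section identifies \(X_q\) with the complement of the zero section in a holomorphic line bundle, i.e. a \(\C^*\)-bundle over \(\PP^{n-1}\setminus\{q=0\}\). Concretely, the fibre coordinate is \(w=x_0+c/q\), which has the same homogeneity as \(x_0\). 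The base is a projective quadric complement, hence Oka by Kusakabe. Degenerate \(q\) (rank one or two) can alternatively be handled by \cref{thm:affine-quadratic}, since the complement is then \(\C^{n-1}\) or \(\C^*\times\C^{n-2}\). Since \(\C^*\) is Oka, the fibre bundle Oka principle shows that \(X_q\) is Oka.

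\emph{The piece \(X_c\).} Let \(V=\{x\in\C^n:c(x)=1\}\). Then \(V\to\PP^{n-1}\setminus\{c=0\}\) is an unramified cyclic triple cover, so \(V\) is Oka. Put
\[
W=\{(x_0,x)\in\C\times V:\ x_0q(x)+1\neq0\}.
\]
The map \((x_0,x)\mapsto[x_0:x]\) is a surjection \(W\to X_c\): given \([x_0:x]\in X_c\), rescale by \(c(x)^{-1/3}\) to get \(c(x)=1\). Its fibres are the orbits of \(\mu_3\) acting diagonally, and this action is free because it is free on \(x\). So \(W\to X_c\) is an unramified triple covering, and it suffices that \(W\) is Oka. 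Now \(W=(V\times\C)\setminus\Gamma_m\) with
\[
m=\frac{-1}{q}=0+\frac1g,\qquad g=-q|_V .
\]
Indeed, \(x_0q+1=0\) means exactly \(x_0=m(x)\) with \(m(x)\neq\infty\), and where \(q=0\) the whole fibre lies in \(W\). Hanysz's \cref{thm:hanysz-graph} gives that \(W\) is Oka because \(V\) is.

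The step needing most care is the \(X_c\) piece. One must check that the \(\mu_3\)-quotient really is \(X_c\) with a free action, and that the graph convention (points with \(q=0\) contribute full fibres) matches Hanysz's definition. A secondary check is that the quadric-complement input covers every degenerate \(q\), including reducible and nonreduced ones. With both pieces Oka, \cref{thm:kusakabe-localization} concludes that \(X\) is Oka.
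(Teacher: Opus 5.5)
Your proposal is correct and follows the paper's proof essentially step for step. Both arguments project from \(p\), cover the complement by the \(\C^*\)-bundle over the quadric complement and the \(\mu_3\)-quotient of Hanysz's graph complement over \(\{c=1\}\), treat \(q\equiv0\) separately as a line bundle over \(\PP^{n-1}\setminus\{c=0\}\), and conclude by Kusakabe localization. The only detail the paper spells out that you leave implicit is the smoothness of \(\{c=1\}\), which follows from Euler's identity.
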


\begin{proof}
Let \(P=\{x_0=0\}\cong\PP^{n-1}\) be the hyperplane of directions from
\(p=[1:0:\cdots:0]\), and put \(\mathcal L=\OO_P(1)\). Let
\[
 \pi\colon\PP^n\setminus\{p\}\longrightarrow P,
 \qquad [x_0:x]\longmapsto[x],
\]
be the projection from \(p\). It identifies \(\PP^n\setminus\{p\}\) with
the total space of the line bundle \(\mathcal L\to P\). Explicitly,
\(b=[x]\in P\) represents the line
\(\ell=\C x\), the fibre
\(\mathcal L_b\) is \(\ell^*\), and \([x_0:x]\) corresponds to the functional
\(\varphi\in\ell^*\) satisfying \(\varphi(x)=x_0\). This description is
independent of the chosen representative \(x\).

In a local frame of \(\mathcal L\), denote the fibre coordinate by \(t\).  The
removed hypersurface has equation
\begin{equation}\label{eq:singular-fibre-equation}
                         tq+c=0.
\end{equation}
Here \(q\) and \(c\) are the local coefficient functions of sections of
\(\OO_P(2)\) and \(\OO_P(3)\). Consequently, \(-c/q\), where defined,
transforms as the coefficient of a section of \(\mathcal L=\OO_P(1)\):
under \(x\mapsto\lambda x\), the quotient scales by
\(\lambda^{3-2}=\lambda\).

For \(U=\PP^n\setminus\{F=0\}\), set
\[
P_q=P\setminus\{q=0\},\quad P_c=P\setminus\{c=0\},\qquad
U^{(q)}=U\cap\pi^{-1}(P_q),\qquad
U^{(c)}=U\cap\pi^{-1}(P_c).
\]
Since \(p\) lies on the removed cubic, \(\pi\) is algebraic on \(U\);
hence \(U^{(q)}\) and \(U^{(c)}\) are Zariski open in \(U\).

Assume first that \(q\not\equiv0\).  Over \(P_q\), the removed set in
\cref{eq:singular-fibre-equation} is the image of the holomorphic
section \(-c/q\) of \(\mathcal L\).  The fibrewise translation
\(t\mapsto t+c/q\) sends this section to the zero section and
identifies \(U^{(q)}\) with \(\mathcal L|_{P_q}\) minus its zero section.  Thus
\(U^{(q)}\to P_q\) is a holomorphic \(\C^*\)-bundle.  The base \(P_q\)
is the complement of a projective quadric and is Oka by
\cite[Corollary~4.9(1)]{KusakabeLocalization}.  Hence \(U^{(q)}\) is
Oka.

Over \(P_c\), take the cubic-root covering
\begin{equation}\label{eq:singular-cubic-cover}
 M_c=\{x\in\C^n:c(x)=1\}\longrightarrow P_c,
                         \qquad x\longmapsto[x].
\end{equation}
Since \(c\) is homogeneous of degree three, Euler's identity gives
\[
 \sum_jx_j\frac{\partial c}{\partial x_j}=3c.
\]
On \(M_c=\{c=1\}\), the right-hand side equals \(3\). Hence the
differential \(dc\) cannot vanish at any point of \(M_c\), and \(M_c\)
is smooth. Scalar multiplication by \(\mu_3\) acts
freely, with quotient \(P_c\); hence \(M_c\to P_c\) is an unramified
three-sheeted covering. Since \(P_c\) is Oka, so is \(M_c\).

The pullback of \(\mathcal L\) to \(M_c\) is trivial.  The normalized
representative \(x\), for which \(c(x)=1\), trivializes this pullback
by evaluation: it sends \(\varphi\in(\C x)^*\) to the scalar
\(t=\varphi(x)\).  In this trivialization the pullback of
\(U^{(c)}\) is
\begin{equation}\label{eq:singular-graph-complement}
 \mathcal W=\{(x,t)\in M_c\times\C:1+tq(x)\ne0\}.
\end{equation}
It is the complement of the affine part of the graph of the meromorphic
function
\[
m=-\frac1q:M_c\longrightarrow\PP^1.
\]
This function has Hanysz's required form \(f+1/g\), with \(f=0\) and
\(g=-q\).
At a zero of \(q\), the value of \(m\) is infinity and the entire
affine fibre belongs to \(\mathcal W\).  Hence \cref{thm:hanysz-graph} applies
and shows that \(\mathcal W\) is Oka.

The map \(\mathcal W\to U^{(c)}\), \((x,t)\mapsto[t:x]\), has deck action
\((x,t)\mapsto(\zeta x,\zeta t)\), \(\zeta\in\mu_3\). This action is
free: since \(x\ne0\), the equality \(\zeta x=x\) forces \(\zeta=1\).
Moreover, \(1+tq(x)\) is invariant because
\(q(\zeta x)=\zeta^2q(x)\). Hence
\(\mathcal W\to U^{(c)}=\mathcal W/\mu_3\) is an unramified finite covering. Since the Oka
property descends under such coverings, \(U^{(c)}\) is Oka.

The two Zariski-open subsets \(U^{(q)},U^{(c)}\subset U\) cover \(U\).
Indeed, if \(q(x)=c(x)=0\), then
\(tq(x)+c(x)=0\) for every point of the projection fibre.  The whole
fibre belongs to the removed cubic and contributes no point to \(U\).
Kusakabe's localization theorem, \cref{thm:kusakabe-localization},
now gives that \(U\) is Oka.

If \(q\equiv0\), then \(P_q\) and \(U^{(q)}\) are empty, while
\(U=U^{(c)}\).  Equation \cref{eq:singular-fibre-equation} is then
independent of the fibre coordinate, and \(U\) is the total space of
\(\mathcal L|_{P_c}\to P_c\). It is Oka by the Oka principle for holomorphic
fibre bundles.
\end{proof}

\begin{theorem}[Integral singular cubics]\label{thm:integral-singular}
Let \(D\subset\PP^n\), \(n\geqslant2\), be an integral singular cubic
hypersurface.  Then \(\PP^n\setminus D\) is Oka and hence holomorphically
elliptic.
\end{theorem}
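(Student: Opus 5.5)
The plan is to reduce to \cref{lem:singular-reduction} by moving a singular point of \(D\) to \(p=[1:0:\cdots:0]\), and then to control the cubic \(c\) that appears there by induction on \(n\). Let \(D=\{F=0\}\) with \(F\) irreducible of degree three, and pick \(p\in\Sing D\). After a projective change of coordinates, \(p=[1:0:\cdots:0]\). Since \(p\) is singular, \(F\) has no \(x_0^3\) or \(x_0^2\) term, so
\[
F=x_0q(x)+c(x)
\]
with \(q\) quadratic and \(c\) cubic in \(x=(x_1,\ldots,x_n)\). If \(c\equiv0\), then \(x_0\) would divide \(F\), contradicting irreducibility; hence \(c\ne0\). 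By \cref{lem:singular-reduction}, it therefore suffices to show that
\[
\PP^{n-1}\setminus\{c=0\}
\]
is Oka. The conclusion that \(\PP^n\setminus D\) is elliptic then follows from the Stein converse recalled in \cref{sec:sprays}.

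The main step is to show that the complement of \(C=\{c=0\}\subset\PP^{n-1}\) is Oka. The cubic \(C\) may be smooth, integral singular, or reducible, and each case is handled by an earlier result. For \(n-1\geqslant2\): if \(C\) is smooth, use the cyclic cover \(\{c=1\}=S_c\setminus B\) together with \cref{thm:surface-source} or \cref{thm:higher-source}, and descend along the unramified \(\mu_3\)-covering. If \(C\) is integral singular, apply the present theorem in dimension \(n-1\), so the argument is an induction on \(n\). If \(c\) is reducible, apply \cref{cor:reducible}. For \(n-1=1\), \(C\) is a set of at most three points in \(\PP^1\), and the complement is Oka exactly when \(C_{\mathrm{red}}\) has at most two points.

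The expected obstacle is the exceptional configuration, where \(C_{\mathrm{red}}\) consists of three distinct hyperplanes through a common codimension-two subspace (three points when \(n=2\)). In that case \(\PP^{n-1}\setminus C\) is not Oka, so \(c\) must be shown to be avoidable. The first thing to try is to prove that exceptional \(c\) forces \(D\) to be reducible or a cone in a way that contradicts integrality. Suppose \(c\) depends only on two variables, say \(x_1,x_2\), and splits into three distinct linear factors. The cone over \(c\) in the variables \(x_1,x_2\) has vertex \(\{x_1=x_2=0\}\), and every point of \(D\cap\{x_1=x_2=0\}\) is singular on \(D\) exactly when \(q\) vanishes there to suitable order. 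So I would vary the choice of singular point \(p\), and of coordinates, to find a \(p\) whose \(c\) is nonexceptional. If every singular point produces an exceptional \(c\), I would show directly that \(F\) must be reducible, or that \(\PP^n\setminus D\) fibres nicely over a nonexceptional base.

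If this case analysis turns out to be too delicate, there is a fallback that applies \cref{thm:kusakabe-localization} more directly. Every point of \(U=\PP^n\setminus D\) with \(q\ne0\) already lies in the Oka piece \(U^{(q)}\) from the proof of \cref{lem:singular-reduction}. The remaining points lie over \(\{q=0\}\setminus\{c=0\}\). For those I would project from a different singular point, or use an explicit analysis of the normal form \(x_0q+c\) with \(c=x_1x_2(x_1-x_2)\), to produce a second Zariski-open Oka neighbourhood, exploiting that \(q\) cannot then vanish identically on the relevant locus without making \(F\) reducible. For \(n=2\), I would instead simply cite Kusakabe's result that integral singular plane-cubic complements are Oka \cite[Corollary~4.9(2)]{KusakabeLocalization}, which settles the base case of the induction.
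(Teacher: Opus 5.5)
\textbf{Gap: the exceptional case for \(c\) is left open.} Your reduction to \cref{lem:singular-reduction}, the induction on \(n\), and the base case \(n=2\) via Kusakabe are sound. But for \(n\geqslant3\) you never show that the exceptional configuration for \(c\) can be avoided, and the routes you sketch do not work as stated.

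\emph{Exceptional \(c\) does not force \(D\) to be reducible or a cone, and varying the singular point need not help.} Take \(F=x_0x_3^2+x_1x_2(x_1-x_2)\) on \(\PP^3\).
\begin{enumerate}[label=\textup{(\alph*)}]
\item It is irreducible, since it is linear in \(x_0\) with coprime coefficients.
\item It is not a cone, since its four partial derivatives are linearly independent quadrics.
\item Its only singular point is \(p=[1:0:0:0]\).
\end{enumerate}
Yet in these coordinates \(c=x_1x_2(x_1-x_2)\) defines three concurrent lines in \(\PP^2\), whose complement is not Oka. Your fallback localization over \(\{q=0\}\setminus\{c=0\}\) is also left unspecified.

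\textbf{Missing idea: \(c\) depends on the complementary hyperplane, not only on \(p\).} The cubic \(c\) is the restriction \(F|_H\) to the complementary hyperplane \(H=\{x_0=0\}\). The coordinate change \(x_0'=x_0+\lambda(x)\), for a linear form \(\lambda\), amounts to choosing another hyperplane \(\{x_0'=0\}\) avoiding \(p\). It keeps \(p=[1:0:\cdots:0]\) and \(q\), and gives \(F=x_0'q+(c-\lambda q)\). So \(c\) is replaced by \(c-\lambda q\).

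\textbf{How this closes the argument.} Since \(\dim D=n-1\geqslant2\), Bertini irreducibility (Jouanolou) shows that a general hyperplane \(H\not\ni p\) has irreducible section \(D\cap H\). Taking \(H\) also transverse to the dense smooth locus of \(D\) makes \(D\cap H\) generically reduced, hence integral. With such an \(H\), \(c=F|_H\) is an integral cubic in \(\PP^{n-1}\), so only two cases remain:
\begin{enumerate}[label=\textup{(\roman*)}]
\item \(c\) smooth, handled by \cref{thm:smooth-complement} (the cyclic cover together with \cref{thm:surface-source,thm:higher-source});
\item \(c\) integral and singular, handled by the induction hypothesis.
\end{enumerate}
The reducible and exceptional cases never occur, and \cref{cor:reducible} is not needed. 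This is exactly how the paper closes the argument. In the example above, \(c-\lambda x_3^2\) is integral for general \(\lambda\).
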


\begin{proof}
We argue by induction on \(n\).  For \(n=2\), this is
Kusakabe's theorem for integral singular plane cubics
\cite[Corollary~4.9(2)]{KusakabeLocalization}.

Assume \(n\geqslant3\) and choose \(p\in\Sing D\).  Hyperplanes avoiding
\(p\) form a nonempty open set in the dual projective space.  By Bertini
irreducibility \cite[Th\'eor\`eme~6.3(4)]{Jouanolou}, a general such
hyperplane \(H\) has irreducible section \(C=D\cap H\).  We may also choose
\(H\) transverse to the dense smooth locus of \(D\), so \(C\) is
generically reduced.  As an effective Cartier divisor on the smooth variety
\(H\) with irreducible support, \(C\) therefore has multiplicity one and is
integral.  Thus \(C\subset H\cong\PP^{n-1}\) is an integral cubic. Choose
coordinates with
\[
p=[1:0:\cdots:0],
\qquad
H=\{x_0=0\}.
\]
Write the defining cubic polynomial of \(D\) as
\[
F=ax_0^3+x_0^2\ell(x)+x_0q(x)+c(x).
\]
Evaluating \(F\) at \(p\) gives \(a=0\). For \(1\leqslant j\leqslant n\),
the value \(F_{x_j}(p)\) is precisely the coefficient of \(x_j\) in
\(\ell\). Since \(p\) is singular, all these coefficients vanish. Hence
\(\ell=0\), so \(F=x_0q+c\), and \(c=F|_H\) defines \(C\). If \(C\) is
smooth, then \(H\setminus C\) is Oka by
\cref{thm:smooth-complement}. If \(C\) is singular, then
\(H\setminus C\) is Oka by the induction hypothesis. Therefore
\[
\PP^{n-1}\setminus C
\]
is Oka in either case.

The singular-point reduction \cref{lem:singular-reduction} now gives that
\(\PP^n\setminus D\) is Oka. Since this complement is affine and Stein,
Gromov's converse theorem makes it holomorphically elliptic.
\end{proof}

\section{Classification and applications to mapping spaces}
\label{sec:classification}

\subsection{Proof of the classification theorem}
\label{subsec:classification-proof}

We now combine the smooth, reducible, and integral singular cases proved
in the preceding sections.

\begin{theorem}[Smooth cubic complements]\label{thm:smooth-complement}
The complement of every smooth cubic hypersurface in \(\PP^n\),
\(n\geqslant2\), is holomorphically elliptic.
\end{theorem}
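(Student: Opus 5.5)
We reduce the statement to Theorems~\ref{thm:surface-source} and~\ref{thm:higher-source} through the cyclic cubic cover described in the introduction. Let \(D=\{F=0\}\subset\PP^n\) be smooth, with \(F\) a homogeneous cubic in \(x=(x_0,\ldots,x_n)\). The first step is to consider the affine hypersurface \(M_F=\{x\in\C^{n+1}:F(x)=1\}\). Euler's identity \(\sum x_jF_{x_j}=3F=3\) on \(M_F\) shows that \(M_F\) is smooth. Scalar multiplication by \(\mu_3\) acts freely on it, and \(x\mapsto[x]\) identifies \(M_F/\mu_3\) with \(\PP^n\setminus D\). Thus \(M_F\to\PP^n\setminus D\) is an unramified three-sheeted covering, exactly as for \(M_c\) in \cref{lem:singular-reduction}.

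The second step is to realize \(M_F\) as a hyperplane complement in a smooth cubic. Put \(S_F=\{F(x)=w^3\}\subset\PP^{n+1}\) and \(H_0=\{w=0\}\). On the chart \(w=1\), \(S_F\setminus H_0\) is exactly \(M_F\), and \(B=S_F\cap H_0\cong D\). The hypersurface \(S_F\) is smooth: at a singular point we would have \(w^2=0\) and \(\nabla F(x)=0\), hence \(x=0\) since \(D\) is smooth, which is impossible. For \(n=2\), \(S_F\) is a smooth cubic surface and \(B\cong D\) is a smooth plane cubic, so \cref{thm:surface-source} gives that \(M_F\) is elliptic. For \(n\geqslant3\), \(S_F\) has dimension \(n\geqslant3\), and \cref{thm:higher-source} applies with no hypothesis on \(B\).

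The third step is to descend ellipticity along the covering. Since \(M_F\) is elliptic, it is Oka. The Oka property passes down unramified coverings (\cite[Proposition~5.6.3]{ForstnericBook}), so \(\PP^n\setminus D\) is Oka. It is affine, hence Stein, so Gromov's converse recalled in \cref{sec:sprays} makes it holomorphically elliptic.

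There is essentially no obstacle left, since the substantive work sits in Theorems~\ref{thm:surface-source} and~\ref{thm:higher-source}. The only points needing care are the smoothness of \(S_F\) and of \(B\) in the surface case, both checked above. We also pass through Oka rather than pushing sprays down directly, because a spray on the cover need not be \(\mu_3\)-equivariant.
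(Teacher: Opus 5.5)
Your proposal is correct and follows the paper's proof essentially step for step. The shared steps are: the cyclic cover \(S_F=\{F=w^3\}\) with \(B_F\cong D\) and \(S_F\setminus B_F\cong\{F=1\}\); smoothness of \(S_F\) from the partial derivatives; ellipticity of \(\{F=1\}\) from \cref{thm:surface-source} or \cref{thm:higher-source} according to \(n\); and descent of the Oka property through the unramified \(\mu_3\)-covering, followed by Gromov's converse for the Stein complement. Your separate Euler-identity check that \(\{F=1\}\) is smooth is redundant, since it follows from smoothness of \(S_F\), but it is harmless.
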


\begin{proof}
Write \(C=\{F=0\}\) and consider the cyclic cubic cover
\[
S_F=\{F(x)=w^3\}\subset\PP^{n+1},
\qquad B_F=S_F\cap\{w=0\}.
\]
Let \(G(x,w)=F(x)-w^3\). Away from \(w=0\), the derivative
\(G_w=-3w^2\) does not vanish. On \(w=0\), a singular point of \(S_F\)
would satisfy \(F=0\) and \(dF=0\), hence would give a singular point of
\(C\). Thus \(S_F\) is smooth. Moreover, \(B_F\cong C\), while setting \(w=1\) identifies
\(S_F\setminus B_F\) with \(\{F=1\}\subset\C^{n+1}\).
This affine hypersurface is elliptic by \cref{thm:surface-source} for \(n=2\)
and by \cref{thm:higher-source} for \(n\geqslant3\).

The radial projection
\[
\{F=1\}\longrightarrow\PP^n\setminus C,\qquad x\longmapsto[x],
\]
is an unramified three-sheeted covering. For each \([x]\) in the complement,
the equation \(\lambda^3F(x)=1\) has three solutions, and they form a free
\(\mu_3\)-orbit. Hence
\(\PP^n\setminus C\) is Oka and, being Stein, is elliptic by
\cref{sec:sprays}.
\end{proof}

\begin{proof}[Proof of \cref{thm:main}]
Every cubic is either smooth, irreducible and singular, or reducible; a
nonreduced cubic belongs to the last case. Thus
\cref{thm:smooth-complement,thm:integral-singular,cor:reducible} show that every
nonexceptional complement is Oka. Since it is affine and Stein, it is elliptic by
\cite[Proposition~5.6.15, p.~230]{ForstnericBook}.

In the exceptional case,
\(\PP^n\setminus D\cong(\C\setminus\{0,1\})\times\C^{n-1}\).
Its non-Oka first factor is a holomorphic retract, so the complement is
neither Oka nor elliptic.
\end{proof}

\begin{remark}\label{rem:algebraic-ellipticity}
The method of this paper proves holomorphic ellipticity but does not establish
algebraic ellipticity. If \(D\subset\PP^2\) is a smooth cubic, then
\(\PP^2\setminus D\) is not algebraically elliptic: algebraic ellipticity
would imply \(\A^1\)-uniruledness, meaning that a general point lies on the
image of a nonconstant algebraic map \(\A^1\to\PP^2\setminus D\). This
contradicts the logarithmic
pluricanonical obstruction of Chen--Zhu
\cite[Corollary~2.8]{ChenZhuLog}, since
\(K_{\PP^2}+D\cong\OO_{\PP^2}\). For a smooth cubic hypersurface
\(D\subset\PP^n\) with
\(n\geqslant3\), it is not known whether \(\PP^n\setminus D\) is algebraically
elliptic.
\end{remark}

\subsection{Degree-three elliptic functions}
\label{subsec:elliptic-functions}

We use the following elementary consequence of the Oka--Grauert principle.

\begin{lemma}\label{lem:principal-bundle-elliptic}
Let \(\pi:X\to B\) be a holomorphic principal \(G\)-bundle, where \(G\) is a
complex Lie group. If \(B\) is Stein and holomorphically elliptic, then
\(X\) is holomorphically elliptic.
\end{lemma}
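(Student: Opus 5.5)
The plan is to build one dominating spray on \(X\) directly, without using Gromov's Stein converse: \(X\) need not be Stein, because \(G\) is an arbitrary complex Lie group. The spray will combine two pieces. The first is a \emph{vertical} spray coming from the right \(G\)-action. The second is a \emph{horizontal} spray obtained by lifting a dominating spray of \(B\) through a holomorphic bundle isomorphism supplied by the Oka--Grauert principle.

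\emph{Vertical part.} Let \(\mathfrak g\) be the Lie algebra of \(G\), and define
\[
v:X\times\mathfrak g\to X,\qquad v(x,\xi)=x\cdot\exp\xi .
\]
This is a holomorphic spray on a trivial bundle, since \(v(x,0)=x\). Because the action is free with orbits equal to the fibres, its vertical differential \(\xi\mapsto\frac{d}{d\tau}\big|_{\tau=0}\,x\cdot\exp(\tau\xi)\) maps \(\mathfrak g\) isomorphically onto \(\ker d\pi_x\).

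\emph{Horizontal part.} Let \((E,p,s)\) be a dominating spray on \(B\). The total space \(E\) is Stein, since it is a vector bundle over the Stein space \(B\). Consider the two holomorphic principal \(G\)-bundles \(p^*X\) and \(s^*X\) over \(E\). The maps \(s\) and \(p\) are homotopic through \(s_t(e)=s(te)\), \(t\in[0,1]\). This homotopy is stationary on the zero section \(0_E\), because \(s(0_b)=b=p(0_b)\). Homotopy invariance of pullbacks therefore gives a continuous isomorphism \(p^*X\cong s^*X\) which is the identity over \(0_E\), where the two bundles coincide.

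Such isomorphisms are the sections of the holomorphic fibre bundle \(\mathrm{Iso}_G(p^*X,s^*X)\to E\), whose fibre is biholomorphic to \(G\). A complex Lie group is Oka, since \(v\) above, restricted to a single group, is a dominating spray. Hence the Oka principle with interpolation on the closed subvariety \(0_E\subset E\) applies. This is Grauert's theorem, or \cite[Theorem~5.4.4]{ForstnericBook}-type results. It deforms the continuous section to a holomorphic isomorphism \(\Theta:p^*X\to s^*X\) that still equals the identity over \(0_E\).

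Now put \(\widetilde E=\pi^*E\), a holomorphic vector bundle over \(X\), and define
\[
h:\widetilde E\to X,\qquad h(x,e)=\mathrm{pr}_X\,\Theta(e,x)\in X_{s(e)} .
\]
Then \(h(x,0)=x\), and \(\pi\circ h(x,e)=s(e)\). Consequently \(d\pi\circ(dh)_{0_x}=(ds)_{0_{\pi(x)}}\), which is surjective onto \(T_{\pi(x)}B\).

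\emph{Combination.} Define the spray on \(\widetilde E\oplus(X\times\mathfrak g)\) by
\[
\sigma(x,e,\xi)=h(x,e)\cdot\exp\xi .
\]
It is well defined and holomorphic because the second summand is trivial, so no transport of the parameter \(\xi\) is needed; it also satisfies \(\sigma(x,0,0)=x\). Its vertical differential at \(0_x\) is \((dh)_{0_x}(e)+\frac{d}{d\tau}\big|_{0}x\exp(\tau\xi)\). The first summand covers \(T_{\pi(x)}B\) under \(d\pi\), and the second fills \(\ker d\pi_x\). Hence \(\sigma\) is dominating, and \(X\) is holomorphically elliptic.

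The main obstacle is the horizontal step: obtaining a \emph{holomorphic} isomorphism \(p^*X\cong s^*X\) that restricts to the identity on the zero section. I would justify it through Grauert's Oka principle for sections of the bundle \(\mathrm{Iso}_G(p^*X,s^*X)\), whose fibre is \(G\), with interpolation along \(0_E\). The normalization on the zero section is essential, since it is exactly what ensures \(h(x,0)=x\).
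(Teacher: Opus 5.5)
Your proposal is correct and follows essentially the same route as the paper: a Grauert-type holomorphic isomorphism \(p^*X\cong s^*X\) over the Stein total space of the parameter bundle lifts the dominating spray of \(B\), and the action \(x\cdot\exp\xi\) supplies the directions in \(\ker d\pi\). The only difference is how the isomorphism is normalized on the zero section. The paper does this more cheaply: it takes any holomorphic isomorphism \(\Phi\) and replaces it by \(\Phi\circ p^*(a^{-1})\), where \(a\) is the bundle automorphism of \(X\) that \(\Phi\) induces on the zero section, so it needs neither your relative homotopy-invariance step nor interpolation in the Oka principle.
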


\begin{proof}
Let \(s:V\to B\) be a dominating spray on a holomorphic vector bundle
\(p:V\to B\). The homotopy
\[
H_t(v)=s(tv),\qquad 0\leqslant t\leqslant1,
\]
joins \(p\) to \(s\), so the principal bundles \(p^*X\) and \(s^*X\) over
\(V\) are topologically isomorphic. Since \(V\) is Stein, the Oka--Grauert
principle for principal bundles \cite{GrauertFibrations} gives a holomorphic
isomorphism
\[
\Phi:p^*X\xrightarrow{\sim}s^*X.
\]
Its restriction to the zero section is a holomorphic principal-bundle
automorphism \(a:X\to X\). Replacing \(\Phi\) by
\(\Phi\circ p^*(a^{-1})\), we may assume that \(\Phi\) is the identity on
the zero section.

Using the natural identification \(p^*X\cong\pi^*V\), write
\[
\Phi(v,x)=(v,\widetilde s(x,v)).
\]
Then \(\widetilde s(x,0)=x\) and
\[
\pi\bigl(\widetilde s(x,v)\bigr)=s(v).
\]
Hence the differential of \(\widetilde s\) supplies all tangent directions
modulo \(\ker d\pi\). The remaining vertical directions are supplied by the
principal action: if \(\mathfrak g\) is the Lie algebra of \(G\), then
\[
S(x,v,\xi)=\widetilde s(x,v)\cdot\exp(\xi)
\]
is a dominating spray on
\((\pi^*V)\oplus(X\times\mathfrak g)\).
\end{proof}

\begin{proof}[Proof of \cref{thm:elliptic-functions}]
Put \(\mathcal R=\mathcal R_3(E,\PP^1)\). Bowman proved that, for a suitable
smooth plane cubic \(C_E\),
\[
\mathcal R\ \text{is Oka}
\quad\Longleftrightarrow\quad
\PP^2\setminus C_E\ \text{is Oka}
\]
\cite[Corollary~30]{Bowman}. The right-hand side holds by
\cref{thm:smooth-complement}, so \(\mathcal R\) is Oka.

Bowman also proved that precomposition by translations on \(E\) gives a
holomorphic principal bundle
\[
E\longrightarrow\mathcal R\longrightarrow B:=\mathcal R/E
\]
with Stein base \(B\) \cite[Theorems~39 and~40]{Bowman}. Since \(E\) is Oka,
the fibre-bundle theorem \cite[Theorem~5.6.5]{ForstnericBook} and the Oka
property of \(\mathcal R\) imply that \(B\) is Oka. As \(B\) is Stein, it is
holomorphically elliptic \cite[Proposition~5.6.15]{ForstnericBook}. The lemma
now shows that \(\mathcal R\) is holomorphically elliptic.
\end{proof}

\medskip
\noindent\textbf{Geometric meaning of Bowman's criterion.}
\smallskip

The plane cubic is intrinsic to the degree-three mapping space. Building on
Namba's description \cite{Namba,Bowman}, Bowman obtains a finite unbranched cover
of the postcomposition quotient by
\((\PP^2\setminus C_E)\times E\), arising from the multiplication-by-three
isogeny of \(E\). This is the geometric source of the cubic complement.

\subsection{Cubic rational self-maps of the sphere}
\label{subsec:cubic-rational-maps}

We now prove \cref{thm:cubic-rational-maps}. Put
\[
V=H^0(\PP^1,\OO_{\PP^1}(3)),
\]
and let
\[
\mathcal B
=
\{W\in\operatorname{Gr}(2,V):W\text{ is base-point-free on }\PP^1\}.
\]
For \(f=[p:q]\in R_3\), write \(W_f=\Span(p,q)\in\mathcal B\). The key
geometric identification is the following.

\begin{theorem}[The postcomposition quotient]
\label{thm:cubic-rational-quotient}
There is an algebraic principal \(\operatorname{PGL}_2(\C)\)-bundle
\[
\operatorname{PGL}_2(\C)
\longrightarrow
R_3
\longrightarrow
\PP^4\setminus D,
\]
where
\[
D=
\left\{
XZW-XV^2-Y^2W+2YVZ-Z^3=0
\right\}
\subset\PP^4.
\]
Moreover, \(D\) is irreducible.
\end{theorem}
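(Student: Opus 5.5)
The plan has two parts: first realize \(R_3\) as a principal bundle over the base-point-free locus \(\mathcal B\subset\operatorname{Gr}(2,V)\), and then identify \(\mathcal B\) with \(\PP^4\setminus D\) using the \(3\times3\) Hankel determinant.

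\emph{The bundle over \(\mathcal B\).} Let \(\operatorname{St}\subset V\times V\) be the Stiefel variety of linearly independent pairs \((p,q)\). The map \(\operatorname{St}\to\operatorname{Gr}(2,V)\) is an algebraic principal \(\operatorname{GL}_2(\C)\)-bundle, Zariski locally trivial via the standard Plücker charts. Restrict it to \(\mathcal B\). Its total space is then the set of pairs without a common zero, since a dependent pair always has a common zero. Divide by the central \(\C^*\); this quotient is exactly \(R_3\), because \([p:q]\) is defined modulo simultaneous scaling. We obtain a principal \(\operatorname{GL}_2/\C^*=\operatorname{PGL}_2(\C)\)-bundle \(R_3\to\mathcal B\), \(f\mapsto W_f\), where \(\operatorname{PGL}_2\) acts by postcomposition \(f\mapsto g\circ f\). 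Freeness is immediate: \(f\) is surjective, so \(g\circ f=f\) forces \(g=\id\). Local triviality is inherited from the Stiefel bundle. It remains to show \(\mathcal B\cong\PP^4\setminus D\).

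\emph{Identifying \(\mathcal B\) with \(\PP^4\setminus D\).} The cubic in the statement is the Hankel (catalecticant) determinant
\[
\det\begin{pmatrix}X&Y&Z\\ Y&Z&V\\ Z&V&W\end{pmatrix}
\]
of a binary quartic \(\Phi=Xs^4+4Ys^3t+6Zs^2t^2+4Vst^3+Wt^4\). I would therefore take \(\PP^4=\PP(S^4)\), the space of binary quartics, and use apolarity. A pencil \(W\subset V\) corresponds to its annihilator \(W^\perp\subset V^*\cong S^3\), which is again a pencil of cubics. The natural candidate map sends \(W\) to the unique quartic \(\Phi\), up to scale, whose first partials span the relevant pencil. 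Equivalently, \(W\) is the \(2\)-dimensional space of cubics apolar to the catalecticant kernel of \(\Phi\).

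The concrete steps are as follows.
\begin{enumerate}[label=(\alph*)]
\item Write down this correspondence explicitly in coordinates. Show that it is a morphism \(\mathcal B\to\PP^4\) with an explicit inverse on \(\PP^4\setminus D\), given by the kernel, or the row space, of the Hankel matrix when it has full rank.
\item Verify that a common root \(t_0\) of \(W\) corresponds exactly to a vanishing Hankel determinant. In the Hankel picture, a common root of the pencil means the catalecticant has a kernel vector of the form \((1,t_0,t_0^2)\) or its limit, which is the classical characterization of \(\{\det=0\}\) as the secant variety of the rational normal quartic.
\item Check that both maps are algebraic and mutually inverse, using \(\operatorname{SL}_2\)-equivariance. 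Since both sides are homogeneous under \(\operatorname{SL}_2\), it suffices to compare normal forms, for example pencils \(\langle s^3+a\,st^2,\ t^3+b\,s^2t\rangle\).
\end{enumerate}

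\emph{Irreducibility of \(D\).} The defining cubic is linear in \(X\), with \(X\)-coefficient \(ZW-V^2\), which is irreducible. A factorization would therefore have to be of the form (linear in \(X\)) times (a factor not involving \(X\)). That second factor would have to divide both \(ZW-V^2\) and the remaining term \(-Y^2W+2YVZ-Z^3\). It cannot do so, since for instance setting \(V=0\) makes \(ZW\) and \(-Y^2W-Z^3\) coprime. Alternatively, \(D\) is the secant variety of an irreducible curve.

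\emph{Main obstacle.} The hard step is pinning down the exact bijection in (a)–(b): finding the correct apolarity map, rather than the Wronskian, which has degree five, and proving that its image is precisely the complement of the Hankel determinant. I would first test the correspondence on the \(\operatorname{SL}_2\)-normal forms above, and only then globalize it using equivariance and the fact that both sides are smooth of dimension \(4\).
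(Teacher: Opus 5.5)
\textbf{Verdict: genuine gap.} Your principal-bundle step is correct: you restrict the Stiefel $\operatorname{GL}_2(\C)$-bundle to $\mathcal B$ and divide by the central $\C^*$, which is a more explicit form of the paper's Step~1. Your irreducibility argument is the paper's Gauss-lemma argument. The gap is the identification $\mathcal B\cong\PP^4\setminus D$. This is the real content of the theorem, and you only outline it.

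\emph{The mechanism in (b) is wrong.} Take $\Phi=a(s+\lambda_1t)^4+b(s+\lambda_2t)^4$. Its Hankel matrix is $a\,v_1v_1^{T}+b\,v_2v_2^{T}$ with $v_i=(1,\lambda_i,\lambda_i^2)$. The moment vectors span its \emph{image}, not its kernel. Its kernel is the coefficient vector of a quadric $h$ with roots $\lambda_1,\lambda_2$ that is apolar to $\Phi$, and the base points of the associated pencil are the roots of $h$.

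\emph{The dictionary must be fixed precisely.} If the first partials of $\Phi$ spanned $W$ itself, base points of $W$ would be multiple roots of $\Phi$, and you would get the sextic discriminant instead of $D$. The correct correspondence is $W=(\Phi^\perp)_3$. This is the kernel of the $2\times4$ catalecticant with rows $(X,Y,Z,V)$ and $(Y,Z,V,W)$, equivalently $\C\Phi=(W\cdot S^1)^\perp$ in degree four.

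\emph{The normal-form reduction in (c) is incomplete.} Neither side is $\operatorname{SL}_2$-homogeneous, since both are $4$-dimensional and $\operatorname{SL}_2$ is $3$-dimensional. Checking on normal forms therefore also requires proving that every orbit meets them.

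\emph{What is missing} is the equivalence ``$W$ base-point-free $\iff(\Phi^\perp)_2=0$'', i.e.\ Macaulay duality for a complete intersection of two binary cubics. You can supply it directly.
\begin{itemize}
\item Suppose $W$ is base-point-free. Coprime cubics have no linear syzygy, so $W\otimes S^1\to S^4$ is injective and $(W S^1)^\perp=\C\Phi$. Then $W\subseteq(\Phi^\perp)_3$. Equality holds because $\Phi$ is not a fourth power; otherwise $W$ would lie in the cubics vanishing at one point. A nonzero $h\in(\Phi^\perp)_2$ would force $W=h\cdot S^1$, which has base points.
\item Conversely, suppose $\det\neq0$ but $W=(\Phi^\perp)_3=\ell\cdot U$ has a common linear factor $\ell$. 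Then the cubic $\ell(\partial)\Phi$ is annihilated by the two-dimensional space $U$ of quadrics, so it is $0$ or a cube $m^3$. Hence $m^{\perp}\ell$ (with $m^\perp$ the linear operator killing $m$), or $\ell^2$, is a nonzero element of $(\Phi^\perp)_2$, a contradiction.
\end{itemize}
Both maps are then given by minors and are inverse morphisms.

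\emph{How the paper avoids this.} The B\'ezout matrix $A(W)$ is linear in the Pl\"ucker coordinates with $\det A(W)=\pm\operatorname{Res}(p,q)$, so base-point-freeness is $\det A\neq0$ immediately. The Pl\"ucker quadric equals $B_{22}-B_{13}$ for $B=\operatorname{adj}A$. The biregular involution $[A]\mapsto[\operatorname{adj}A]$ then carries it onto the Hankel hyperplane.

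Your map is in fact the same map in apolar language. For $W=\langle s^3+a\,st^2,\ t^3+b\,s^2t\rangle$, $\operatorname{adj}A(W)$ is projectively the Hankel matrix of $as^4-6s^2t^2+bt^4$, whose $2\times4$ catalecticant kernel is exactly $W$. Once completed, your route would give a coordinate-free explanation of the paper's matrix inversion, but as written its central step is not established.
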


\begin{proof}
We proceed in three steps.

\medskip
\noindent\textbf{Step 1: the postcomposition quotient.}
\smallskip

The map
\[
\rho:R_3\longrightarrow\mathcal B,
\qquad
[p:q]\longmapsto\Span(p,q),
\]
is surjective. For \(W\in\mathcal B\), choosing an ordered basis of \(W\)
defines a degree-three map, and two bases define the same point of \(R_3\)
precisely when they differ by a common scalar. Thus
\(\operatorname{PGL}_2(\C)\) acts simply transitively on each fibre of
\(\rho\) by postcomposition. Equivalently, \(\rho\) is the projective frame
bundle of the tautological rank-two bundle restricted to \(\mathcal B\):
\begin{equation}\label{eq:R3-principal-bundle}
\operatorname{PGL}_2(\C)
\longrightarrow
R_3
\xrightarrow{\ \rho\ }
\mathcal B.
\end{equation}
See also \cite[\S3]{HanyszCubicMaps}.

\medskip
\noindent\textbf{Step 2: the B\'ezout matrix.}
\smallskip

Write
\[
\begin{aligned}
p&=a_0z_0^3+a_1z_0^2z_1+a_2z_0z_1^2+a_3z_1^3,\\
q&=b_0z_0^3+b_1z_0^2z_1+b_2z_0z_1^2+b_3z_1^3,
\end{aligned}
\]
and let
\[
P_{ij}=a_i b_j-a_j b_i,
\qquad 0\leqslant i<j\leqslant3,
\]
be the Pl\"ucker coordinates of \(W=\Span(p,q)\). The Pl\"ucker embedding
identifies \(\operatorname{Gr}(2,V)\) with the quadric
\begin{equation}\label{eq:plucker-R3}
P_{01}P_{23}-P_{02}P_{13}+P_{03}P_{12}=0
\end{equation}
in \(\PP^5\).

The classical B\'ezout formula associates to \(W\) the symmetric matrix
\begin{equation}\label{eq:bezout-matrix-cubics}
A(W)=
\begin{pmatrix}
P_{01}&P_{02}&P_{03}\\
P_{02}&P_{03}+P_{12}&P_{13}\\
P_{03}&P_{13}&P_{23}
\end{pmatrix}
\end{equation}
and gives
\[
\det A(W)=\pm\operatorname{Res}(p,q);
\]
see \cite[Example~4.1 and Proposition~4.2]{EisenbudSchreyerChow}. Hence
\[
W\in\mathcal B
\quad\Longleftrightarrow\quad
\det A(W)\neq0.
\]

Set
\[
A=
\begin{pmatrix}
x&y&z\\
y&u&v\\
z&v&w
\end{pmatrix},
\qquad
(x,y,z,u,v,w)
=
(P_{01},P_{02},P_{03},P_{03}+P_{12},P_{13},P_{23}).
\]
This linear change of coordinates identifies
\(\PP^5\) with \(\PP(\operatorname{Sym}_3(\C))\), where \(\operatorname{Sym}_3(\C)\) denote the six-dimensional vector space
of symmetric \(3\times 3\) complex matrices. 

Since \(P_{12}=u-z\), the
Pl\"ucker equation becomes
\begin{equation}\label{eq:quadric-symmetric-matrices}
Q(A):=xw-yv+zu-z^2=0.
\end{equation}
Therefore
\[
\mathcal B
\cong
\{[A]\in\PP(\operatorname{Sym}_3(\C)):Q(A)=0,\ \det A\neq0\}.
\]

\medskip
\noindent\textbf{Step 3: projective matrix inversion.}
\smallskip

Let \(\operatorname{adj}(A)\) denotes the classical adjugate matrix of \(A\),
characterized by 
\[A\,\operatorname{adj}(A)=\det(A)\,I.\]
On the open set of invertible symmetric matrices, projective inversion
\[
\iota:[A]\longmapsto[\operatorname{adj}(A)]
\]
is a biregular involution because, for \(3\times3\) matrices \(A\),
\[
\operatorname{adj}(\operatorname{adj}(A))=(\det A)A.
\]
Put \(B=\operatorname{adj}(A)\). Since the entries of \(B\) read as
\[
B_{22}=xw-z^2,
\qquad
B_{13}=yv-zu,
\]
we have
\[
Q(A)=B_{22}-B_{13}.
\]
Thus \(\iota\) identifies \(\mathcal B\) with
\[
H\setminus\{\det B=0\},
\qquad
H:=\{B_{22}=B_{13}\}\cong\PP^4.
\]
Writing
\[
B=
\begin{pmatrix}
X&Y&Z\\
Y&Z&V\\
Z&V&W
\end{pmatrix}
\]
on \(H\), we obtain
\[
\det B=XZW-XV^2-Y^2W+2YVZ-Z^3.
\]
Hence \(\mathcal B\cong\PP^4\setminus D\).

It remains to note that \(D\) is irreducible. Viewed as a polynomial in
\(X\),
\[
F=X(ZW-V^2)-Y^2W+2YVZ-Z^3.
\]
The polynomial \(ZW-V^2\) is irreducible and does not divide the constant
term (set \(Z=1\) and \(V=W=0\)). Thus \(F\) is primitive and linear over
the fraction field of \(\C[Y,Z,V,W]\); Gauss' lemma gives irreducibility.
In particular, \(D\) is not the exceptional union of three hyperplanes in
\cref{thm:main}.
\end{proof}

\begin{proof}[Proof of \cref{thm:cubic-rational-maps}]
By \cref{thm:cubic-rational-quotient}, \(R_3\) is a holomorphic principal
\(\operatorname{PGL}_2(\C)\)-bundle over \(\PP^4\setminus D\), where \(D\)
is an irreducible cubic hypersurface. By \cref{thm:main}, the base is
holomorphically elliptic; it is also affine, hence Stein. The lemma therefore
shows that \(R_3\) is holomorphically elliptic.
\end{proof}

\begin{remark}[Why degree three is special]
\label{rem:degree-three-special}
The quotient by base-point-free pencils and its B\'ezout description exist in
every degree; the reduction to a hypersurface complement is special to
\(d=3\). For \(d\geqslant2\), the postcomposition quotient of \(R_d\) is the
base-point-free locus in
\[
\operatorname{Gr}\bigl(2,H^0(\PP^1,\OO_{\PP^1}(d))\bigr)
\cong\operatorname{Gr}(2,d+1),
\]
and the B\'ezout formula represents the resultant by the determinant of a
symmetric \(d\times d\) matrix linear in the Pl\"ucker coordinates
\cite[Example~4.1 and Proposition~4.2]{EisenbudSchreyerChow}.

For \(d=3\), two coincidences occur: \(\operatorname{Gr}(2,4)\subset\PP^5\)
is a quadric hypersurface, and projective inversion of the associated
symmetric \(3\times3\) B\'ezout matrix turns this single quadratic equation
into a linear one on the invertible locus. This identifies the quotient with
a cubic hypersurface complement in \(\PP^4\). For \(d\geqslant4\), the
Grassmannian has higher codimension, so there is no single Pl\"ucker equation
for inversion to linearize; the same argument no longer reduces the Oka
problem for \(R_d\) to a hypersurface complement.
\end{remark}

\medskip

\section*{Acknowledgements}

The author is grateful to Franc Forstneri\v c for introducing him to Oka
theory and for his careful reading and valuable comments on an earlier
version of the manuscript. He thanks Yuta Kusakabe for drawing his attention
to the problem of cubic complements. He also thanks Gaofeng Huang for helpful
suggestions that improved the manuscript.

\section*{AI use disclosure}

The author developed the mathematical ideas and verified all arguments.
AI-based tools assisted with drafting, language editing, and consistency
checks; the author takes full responsibility for the manuscript.

\section*{Funding}

The author acknowledges partial support from the National Key R\&D
Program of China under Grants No.~2023YFA1010500 and
No.~2021YFA1003100, and from the National Natural Science Foundation
of China under Grants No.~12288201 and No.~12471081, as well as
support from the Xiaomi Young Talents Program.

\end{document}